\newcommand{\R}{{\mathbb R}}
\newcommand{\Les}{{\mathbb L}}
\newcommand{\Sf}{{\mathbb S}}
\newtheorem{theorem}{Theorem}
\newtheorem{proposition}[theorem]{Proposition}
\newtheorem{lemma}[theorem]{Lemma}
\newtheorem{corollary}[theorem]{Corollary}
\newcommand{\spa}{\mbox{span}}
\newcommand{\Q}{\mathbb{Q}}
\newcommand{\Hy}{\mathbb{H}}
\def\<{\langle}
\def\>{\rangle}
\def\d{\partial}
\def\be{\begin{equation} }
\def\ee{\end{equation} }
\newcommand\bea{\begin{eqnarray*}}
\newcommand\eea{\end{eqnarray*}}
\begin{document}

\title{Minimal conformally flat hypersurfaces}

\author{C. do Rei Filho and R. Tojeiro\footnote{Partially supported by CNPq grant 	307338/2013-4  and FAPESP grant
2011/21362-2.}}
\date{}
\maketitle

\begin{abstract}
We study conformally flat hypersurfaces $f\colon M^{3} \to \Q^{4}(c)$ with three distinct principal curvatures and constant mean curvature $H$ in a space form with constant sectional curvature $c$. First we extend a theorem due to Defever when $c=0$ and show that there is no such   hypersurface  if $H\neq 0$. Our main results are for the minimal case $H=0$. If $c\neq 0$, we prove that if $f\colon M^{3} \to \Q^{4}(c)$ is a minimal conformally flat hypersurface with three distinct principal curvatures then $f(M^3)$ is an open subset of a generalized cone over a Clifford torus in an umbilical hypersurface $\Q^{3}(\tilde c)\subset \Q^4(c)$, $\tilde c>0$, with $\tilde c\geq c$ if $c>0$. For $c=0$, we show that, besides the   cone over the   Clifford torus in $\Sf^3\subset \R^4$, there exists precisely a one-parameter family of (congruence classes of) minimal isometric immersions $f\colon M^3 \to \R^4$  with three distinct principal curvatures of simply-connected conformally flat Riemannian manifolds. 
\end{abstract}

It was shown by E. Cartan \cite{ca} that if $f\colon M^{n} \to \Q^{n+1}(c)$  is a hypersurface of   dimension   $n\geq 4$ of a space form with constant sectional curvature $c$ and dimension $n+1$, then $M^n$ is conformally flat
 if and only if  $f$  has a principal curvature of multiplicity 
at least $n-1$. Recall that a Riemannian manifold $M^n$ 
is \emph{conformally flat} if each point of $M^n$ has an open neighborhood that is conformally diffeomorphic to 
an open subset of Euclidean space $\R^n$. 

Cartan also proved that any hypersurface $f\colon M^{3} \to \Q^{4}(c)$ with a principal curvature of multiplicity two is conformally flat, and  that the converse is no longer true in this case. 
The study of conformally flat hypersurfaces by Cartan was  taken up by  Hertrich-Jeromin \cite{h-j}, who showed  that a conformally flat hypersurface $f\colon\,M^3\to\Q^{4}(c)$ with three distinct principal curvatures  
 admits locally principal coordinates $(u_1, u_2, u_3)$ such that the induced metric $ds^2=\sum_{i=1}^3v_i^2du_i^2$ satisfies, say,
$v_2^2=v_1^2+v_3^2$. A recent improvement by  Canevari and the second author \cite{ct} (see Theorem \ref{main3} below) of  Hertrich-Jeromin's theorem is the starting point for the results of this paper.

Conformally flat hypersurfaces $f\colon M^{n} \to \Q^{n+1}(c)$ of dimension $n\geq 4$ with constant mean curvature 
were shown by do Carmo and Dajczer \cite{dcd} to be rotation hypersurfaces 
whose profile curves satisfy a certain ODE.  In particular, they concluded that minimal 
conformally flat hypersurfaces with  dimension $n\geq 4$ of $\Q^{n+1}(c)$ are generalized catenoids, extending a  
 previous result for $c=0$ by Blair \cite{bl}. The same conclusions apply for  
hypersurfaces $f\colon M^{3} \to \Q^{4}(c)$ that have a principal curvature of multiplicity two.  The case in which $f$ has three distinct principal curvatures was studied by Defever \cite{def}, who proved that no such hypersurface exists if $c=0$.  
Our first theorem extends this result  for $c\neq 0$.

\begin{theorem}\label{thm:cmc} There exists no  conformally flat hypersurface $f\colon\,M^3\to\Q^{4}(c)$ with three distinct principal curvatures and nonzero constant mean curvature.
\end{theorem}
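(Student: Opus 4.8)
The plan is to start from the local description of conformally flat hypersurfaces with three distinct principal curvatures furnished by Theorem \ref{main3}, to impose that the mean curvature is a nonzero constant, and to show that the resulting system of equations has no solution. Let $\lambda_1,\lambda_2,\lambda_3$ denote the principal curvatures of $f$ with respect to a unit normal field; by hypothesis they are pairwise distinct and $\lambda_1+\lambda_2+\lambda_3=3H$ with $H$ a nonzero constant. By Hertrich-Jeromin's theorem in the sharpened form of Theorem \ref{main3}, around each point there exist principal coordinates $(u_1,u_2,u_3)$ in which the induced metric is $ds^2=\sum_{i=1}^3 v_i^2\,du_i^2$ with $v_2^2=v_1^2+v_3^2$; I would set $v_1=\rho\cos\varphi$, $v_2=\rho$, $v_3=\rho\sin\varphi$, so that the local geometry is encoded by $\rho>0$, $\varphi$ and $\lambda_1,\lambda_2,\lambda_3$, subject to the Gauss and Codazzi equations and the additional structure equations of Theorem \ref{main3}.

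The first step is to exploit the constancy of $H$. In principal coordinates the Codazzi equations relate, for $i\neq j$, the derivative $\partial_j\lambda_i$ to the logarithmic derivative $\partial_j\log v_i$ through the factor $\lambda_j-\lambda_i$, which is nonzero precisely because the principal curvatures are distinct; differentiating $\lambda_1+\lambda_2+\lambda_3=3H$ in the direction $\partial_j$ and substituting these expressions for the two terms with $i\neq j$ produces an explicit formula for the remaining derivative $\partial_j\lambda_j$. Thus all first derivatives of $\lambda_1,\lambda_2,\lambda_3$ become explicit functions of $\varphi$, $\rho$, their first derivatives, and the $\lambda_i$. Substituting this into the Gauss equations, which express the sectional curvatures of the coordinate planes as $c+\lambda_i\lambda_j$, and into the remaining structure equations of Theorem \ref{main3}, yields a large but completely explicit first-order system; on top of it one has the integrability conditions $\partial_j\partial_k\lambda_i=\partial_k\partial_j\lambda_i$ forced by those first-order formulas. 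The goal is to show this overdetermined package is inconsistent when $H\neq 0$.

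The heart of the matter, and the step I expect to be the main obstacle, is the elimination that converts this redundancy into an outright contradiction: combining the Gauss equations, the Codazzi relations together with the constancy of $H$, and the integrability conditions, one must cancel all derivatives of $\rho$ and $\varphi$ and arrive at a scalar identity that cannot hold when the $\lambda_i$ are pairwise distinct and $H\neq 0$ --- for instance an identity of the schematic form $H\cdot P=0$ with $P\neq 0$, or an inequality forcing $H^2$ to equal a manifestly non-positive quantity at a suitably chosen point. For $c=0$ this elimination is, in essence, Defever's computation in \cite{def}; the genuinely new feature for $c\neq 0$ is the constant-curvature terms entering the Gauss equations, which I would track carefully through the elimination to verify that they cannot conspire to cancel the obstruction --- separating, if necessary, the cases $c>0$ and $c<0$, and normalizing the coordinates at the chosen base point (for instance prescribing the value of $\rho$ there, or working along a curvature line on which $\varphi$ is stationary) to reduce the number of free quantities. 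Once the scalar identity forces $H=0$, the non-existence claimed in the theorem follows.
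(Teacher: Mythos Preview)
Your plan is the same strategy the paper uses: work in the principal coordinates supplied by Theorem~\ref{main3}, write down the full first-order system coming from Gauss, Codazzi, the Guichard relation $v_2^2=v_1^2+v_3^2$, and the constancy of $H$, and then extract a contradiction from the integrability conditions $\partial_j\partial_k=\partial_k\partial_j$. The paper carries this out with the variables $v_1,v_2,v_3$ and the logarithmic derivatives $\alpha_1=\tfrac{1}{v_2}\partial_{u_1}v_2$, $\alpha_2=\tfrac{1}{v_3}\partial_{u_2}v_3$, $\alpha_3=\tfrac{1}{v_1}\partial_{u_3}v_1$; your $(\rho,\varphi)$ parametrization is equivalent, and your use of $\partial_j(\sum\lambda_i)=0$ to express $\partial_j\lambda_j$ is exactly how the paper closes the system (equations (\ref{hol.flat.f.esp.})(v),(vi) and (\ref{rel.h-ki-kj})).

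Where your expectations are slightly off is the form of the contradiction. The mixed-partial compatibility does not produce a single identity $H\cdot P=0$; it produces three identities (the relations (\ref{equ.alg.f.e.})), each carrying a factor $\alpha_i$. In the generic situation where at least two of the $\alpha_i$ are nonzero, comparing two of these does give something of the shape you anticipate (see (\ref{con.ad.f.e.3})--(\ref{con.ad.f.e.4})), and one further differentiation forces $H=0$ and then $c=0$. But when two of the $\alpha_i$ vanish on an open set the corresponding compatibility relations are vacuous, and a separate argument is needed in each such degenerate case (Lemmas~\ref{le:alphaiszero}--\ref{le:alpha2alphaj}); the special slice $v_1\equiv v_3$ also has to be handled on its own (Lemma~\ref{le:v1equalv3}). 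So your outline is right, but you should expect a case analysis governed by which $\alpha_i$ vanish rather than a single clean elimination; that bookkeeping is where most of the actual work lies.
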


Examples of minimal conformally flat hypersurfaces $f\colon M^{3} \to \Q^{4}(c)$ with three distinct principal curvatures can be constructed  for any $c\in \R$ as follows. Take a Clifford torus $g\colon \R^2\to \Q^{3}(\tilde c)$, $\tilde c>0$, with $\tilde c\geq c$ if $c>0$,   in an  umbilical hypersurface $\Q^{3}(\tilde c)\subset \Q^4(c)$, and consider the generalized cone over $g$ in $\Q^{4}(c)$, that is, 
the hypersurface parametrized, on the open subset of regular points, by 
the map $G\colon \R^2\times \R\to \Q^{4}({c})$ given by $$G(x, t)=\exp_{g(x)}(t\xi(g(x)))$$ where $\xi$ is a unit normal vector field to the inclusion 
$i\colon \Q^3(\tilde c)\to \Q^{4}({c})$ and  $\exp$ is the exponential map of  $\Q^{4}({c})$. 
If $c=0$, the map $G$ parametrizes a standard cone over a Clifford torus in the sphere $\Q^{3}(\tilde c)\subset \R^4$.\vspace{1ex}

The next result states that there are no further examples of minimal conformally flat hypersurfaces $f\colon M^{3} \to \Q^{4}(c)$ with three distinct principal curvatures if $c\neq 0$.

\begin{theorem}\label{thm:minimalcneq0} If $f\colon M^{3} \to \Q^{4}(c)$, $c\neq 0$,  is a minimal conformally flat hypersurface with three distinct principal curvatures then $f(M^3)$ is an open subset of a generalized cone over a Clifford torus in an umbilical hypersurface $\Q^{3}(\tilde c)\subset \Q^4(c)$, $\tilde c>0$, with $\tilde c\geq c$ if $c>0$.
\end{theorem}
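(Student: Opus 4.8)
The plan is to exploit the structure theorem of Canevari and the second author (Theorem~\ref{main3}) which, for a conformally flat hypersurface $f\colon M^3\to\Q^4(c)$ with three distinct principal curvatures, provides principal coordinates $(u_1,u_2,u_3)$ with induced metric $ds^2=\sum_i v_i^2\,du_i^2$ satisfying the Hertrich-Jeromin relation $v_2^2=v_1^2+v_3^2$, together with an explicit system of PDEs (the Gauss and Codazzi equations rewritten in these coordinates) for the $v_i$ and the principal curvatures $\lambda_i$. First I would write out the Codazzi equations in this frame, which express the derivatives $\partial_j\lambda_i$ in terms of $\lambda_i,\lambda_j$ and the logarithmic derivatives $\partial_j\log v_i$. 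Then I would impose the minimality condition $\sum_i\lambda_i/v_i\cdot(\text{something})=0$; more precisely $H=0$ gives one linear relation among the $\lambda_i$, which can be combined with the constraint $v_2^2=v_1^2+v_3^2$ coming from conformal flatness. Since $H$ is constant (indeed zero), the reasoning will parallel the proof of Theorem~\ref{thm:cmc}, but now the degenerate case $H=0$ — which was excluded there — must be analyzed rather than ruled out.

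The key steps, in order, would be: (1) parametrize using Theorem~\ref{main3} and record the full set of compatibility equations; (2) use $H=0$ together with the relation among the $v_i$ to reduce the number of independent unknowns, ideally to a single function of one variable, exploiting that the problem should have a ``rotational'' or ``cone-like'' symmetry; (3) show that the resulting overdetermined system forces one of the principal curvatures — say the one of the ``distinguished'' direction $\partial/\partial u_2$, or rather a direction singled out by the $v_2^2=v_1^2+v_3^2$ structure — to vanish, or forces the corresponding coordinate curves to be geodesics that are mapped to straight lines (when $c=0$) or to geodesics of $\Q^4(c)$ leaving an umbilical $\Q^3(\tilde c)$; (4) identify the resulting hypersurface as a generalized cone: the leaves orthogonal to the flat direction are umbilical surfaces of constant curvature $\tilde c$, and one checks the Clifford-torus condition (the product of the two principal curvatures of the leaf, or equivalently flatness of the leaf) follows from conformal flatness plus $v_2^2=v_1^2+v_3^2$. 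Step (3) is where the actual integration of the ODE system happens, and it is essentially the analogue of the ODE that do Carmo--Dajczer obtained for the higher-dimensional or multiplicity-two case, except here the ``three distinct principal curvatures'' hypothesis changes which solutions survive.

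I expect the main obstacle to be step (3): extracting, from the coupled Gauss--Codazzi system in the three coordinates, the precise ODE and showing that the only solutions compatible with \emph{three distinct} principal curvatures and $H=0$ are the cone solutions — in particular ruling out spurious branches and handling the possibility that the distinguished principal curvature is nowhere zero on some open set. A secondary subtlety is the case $c>0$: one must show not only that $f(M^3)$ lies in a generalized cone over a surface in an umbilical $\Q^3(\tilde c)$, but that $\tilde c\geq c$; this should come out of the sign of the relevant curvature term (a reality/positivity constraint on a constant of integration, forcing $\tilde c-c\geq 0$), analogous to how the existence construction in the paragraph before the theorem requires $\tilde c\geq c$. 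Once the leaves are identified as flat surfaces of constant curvature $\tilde c$ in a totally umbilical $\Q^3(\tilde c)$, the classification of flat surfaces in $\Q^3(\tilde c)$ with constant mean curvature zero (relative to the cone structure) pins them down as Clifford tori, completing the proof.
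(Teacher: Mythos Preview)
Your outline is essentially the paper's approach: set up the holonomic coordinates from Theorem~\ref{main3} (in the form of Corollary~\ref{le:asspair}), derive the overdetermined system from Gauss--Codazzi plus $H$ constant, and show that when $c\neq 0$ the system forces one principal curvature to vanish, after which a separate argument (your step~(4), the paper's Proposition~\ref{thm:minimalpczero}) identifies the hypersurface as a generalized cone over a Clifford torus.

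The one place where your expectation diverges from the paper's execution is step~(3). You anticipate ``integration of the ODE system'' and finding ``the precise ODE''. The paper does not integrate anything: it introduces auxiliary functions $\alpha_i = v_j^{-1}\partial v_j/\partial u_i$ (cyclic), writes out all first derivatives of $v_i$ and $\alpha_i$ (Proposition~\ref{flat.med.reduz.result.}), and then reads off purely \emph{algebraic} consequences from the equality of mixed partials $\partial^2\alpha_i/\partial u_j\partial u_k$. These compatibility relations (equations~(\ref{equ.alg.f.e.})) show, after a case analysis on which $\alpha_i$ vanish on open sets (Lemmas~\ref{le:v1equalv3}--\ref{le:v1neqv3}), that if $v_1\neq v_3$ anywhere then necessarily $H=0$ \emph{and} $c=0$. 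Hence for $c\neq 0$ one has $v_1\equiv v_3$, which is exactly $\lambda_2\equiv 0$. So the mechanism is algebraic elimination, not ODE integration; this is why the same machinery simultaneously proves Theorem~\ref{thm:cmc} and feeds into Theorem~\ref{thm:minimalcneq0}, as you correctly suspected. Your identification of step~(3) as the crux, and the $\tilde c\geq c$ inequality as a positivity constraint, are both accurate.
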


 Our last and main result shows that the preceding statement is not true in  Euclidean space $\R^4$. In fact, we show that, besides  the  cone over a Clifford torus in $\Sf^3$,  there exists precisely a one-parameter family of further examples.

\begin{theorem}\label{thm:minimalceq0} There exists precisely a one-parameter family of (congruence classes of) minimal isometric immersions $f\colon M^3 \to \R^4$  with three distinct principal curvatures of simply connected conformally flat Riemannian manifolds. 
\end{theorem}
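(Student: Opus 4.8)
The plan is to start from the local description of conformally flat hypersurfaces with three distinct principal curvatures provided by Theorem~\ref{main3}, impose the minimality condition $H=0$ together with the Gauss and Codazzi equations of $\R^4$, show that the resulting overdetermined system collapses to a system of ordinary differential equations in a single variable, and then integrate it and pass to maximal simply connected domains.

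First I would fix, around a generic point of $M^3$, principal coordinates $(u_1,u_2,u_3)$ as in Theorem~\ref{main3}, with first fundamental form $\sum_{i=1}^3 v_i^2\,du_i^2$ satisfying the relation $v_2^2=v_1^2+v_3^2$, and I would write $v_1=\varphi\cos\theta$, $v_2=\varphi$, $v_3=\varphi\sin\theta$ for smooth functions $\varphi>0$ and $\theta$. Denoting by $\lambda_1,\lambda_2,\lambda_3$ the (pairwise distinct) principal curvatures, the data $(\varphi,\theta,\lambda_1,\lambda_2,\lambda_3)$ is subject to: the Codazzi equations $\partial_j\lambda_i=(\lambda_j-\lambda_i)\,\partial_j\log v_i$ for $i\neq j$; the Gauss equations of $\R^4$, namely $K_{ij}=\lambda_i\lambda_j$ for the sectional curvatures of the coordinate planes together with $R_{ijik}=0$ for distinct $i,j,k$; and the additional relation that encodes conformal flatness in Theorem~\ref{main3}. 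Minimality adds the single equation $\lambda_1+\lambda_2+\lambda_3=0$.

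I expect the main obstacle to be showing that this system forces $\varphi$, $\theta$ and the $\lambda_i$ to depend on only one of the coordinates. The idea would be to differentiate $\lambda_1+\lambda_2+\lambda_3=0$, substitute the Codazzi equations, and play this against the curvature relations $K_{ij}=\lambda_i\lambda_j$ and $R_{ijik}=0$; tracking carefully the substitution $v_1=\varphi\cos\theta$, $v_3=\varphi\sin\theta$ and the non-umbilicity $\lambda_i\neq\lambda_j$, this should reduce to a case analysis whose outcome is that all $\partial_1$- and $\partial_3$-derivatives of the fundamental data vanish, so that everything is a function of $u_2$. (This is where minimality is genuinely used: the same manipulations with $H\neq 0$ constant should reproduce the non-existence statement of Theorem~\ref{thm:cmc}.) Once the problem is one-dimensional, the surviving Gauss--Codazzi equations and the conformal-flatness relation can be reduced to a single second-order autonomous ODE for $\theta$, with $\varphi$ and the $\lambda_i$ then determined by $\theta$ up to an overall homothety of $\R^4$. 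The solutions with $\theta$ constant should recover exactly the generalized cone over the Clifford torus in $\Sf^3\subset\R^4$ described before the theorem (for which one checks directly that $v_1/v_2$ and $v_3/v_2$ are constant), while the nonconstant solutions, modulo translation of $u_2$, should form a one-parameter family parametrized by the energy of the autonomous equation; one then has to verify along this family that the three principal curvatures remain distinct.

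To conclude, I would globalize: a solution on its maximal interval yields a simply connected, conformally flat $M^3$ together with first and second fundamental forms satisfying the Gauss and Codazzi equations of $\R^4$, hence, by the fundamental theorem of hypersurfaces, a minimal isometric immersion $f\colon M^3\to\R^4$ with three distinct principal curvatures, unique up to congruence; and conversely any such immersion of a simply connected conformally flat manifold restricts near each point to one of the local solutions above and is therefore, by the same uniqueness, determined up to congruence by the single parameter. This gives, besides the cone over the Clifford torus in $\Sf^3\subset\R^4$, precisely a one-parameter family.
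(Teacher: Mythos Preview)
Your proposal contains a genuine gap at the central step: the claim that the system collapses to a one-variable ODE, i.e., that all $\partial_1$- and $\partial_3$-derivatives of the fundamental data vanish, is false in general. In the paper's notation, set $\alpha_1=\tfrac{1}{v_2}\partial_{u_1}v_2$, $\alpha_2=\tfrac{1}{v_3}\partial_{u_2}v_3$, $\alpha_3=\tfrac{1}{v_1}\partial_{u_3}v_1$. Your claim amounts to $\alpha_1\equiv 0\equiv\alpha_3$. But Lemma~\ref{le:alpha1alpha3} (whose proof goes through verbatim for $H=0$, $c=0$: equation~\eqref{col.ex.f.e.9} already gives a contradiction when $H=0$ and $v_1\neq v_3$) shows that $\alpha_1=\alpha_3=0$ on an open set forces $v_1=v_3$ there, which is exactly the Clifford-cone case. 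So your reduction would recover only the cone over the Clifford torus and miss the entire one-parameter family you are trying to construct. Concretely, for the non-Clifford examples the metric coefficients $v_i$ genuinely depend on all three coordinates $u_1,u_2,u_3$; the PDE system does \emph{not} become an ODE.

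What the paper does instead is quite different and explains where the single parameter really comes from. One shows that $\phi=(v_1,v_2,v_3,\alpha_1,\alpha_2,\alpha_3)$ satisfies a first-order PDE system~\eqref{flat.min.reduz.} together with the algebraic constraints $v_2^2=v_1^2+v_3^2$ and \eqref{equ.chave}; these two constraints cut out a four-dimensional algebraic variety $M^4\subset\R^6$ (Proposition~\ref{prop.chave}), and the right-hand sides of the PDE system define three commuting vector fields $X_1,X_2,X_3$ tangent to $M^4$ that are linearly independent off the Clifford locus (Proposition~\ref{prop.chaveb}). Thus $\phi$ takes values in a single leaf of a codimension-one foliation of $\tilde M^4$, and leaves modulo a finite group of involutions (Proposition~\ref{prop:involutions}) correspond to congruence classes. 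The ``one parameter'' is the transverse parameter of this foliation, not a reduction of the PDE to an ODE. If you want to salvage your outline, the place to rethink is the step ``everything depends only on $u_2$'': that is precisely what fails, and the correct replacement is the geometric picture of a $3$-dimensional integral manifold inside a $4$-dimensional constraint variety.
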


More precisely, we show that there exist an algebraic variety  $M^4\subset \R^6$, which contains a pair of straight lines $\ell_{-}$ and $\ell_+$ whose complement  $\tilde M^4=M^4\setminus (\ell_{-}\cup \ell_+)$ is a regular submanifold of $\R^6$,  
an involutive distribution ${\cal D}$ of codimension  one on $\tilde M^4$ and a finite group $G$ of involutions of $\tilde M^4$ isomorphic to $\mathbb{Z}_2\times \mathbb{Z}_2\times \mathbb{Z}_2\times \mathbb{Z}_2$ such that the following assertions hold:
\begin{itemize}
\item[(i)] To each leaf $\sigma$ of  ${\cal D}$ one can associate a minimal  immersion $f_\sigma\colon U_\sigma\to \R^4$
with three distinct principal curvatures of a simply connected open subset $U_\sigma\subset \R^3$, which is conformally flat with the metric induced by $f_\sigma$, and a covering map  $\phi_\sigma\colon U_\sigma\to \sigma$. The singular set $\ell_{-}\cup \ell_+$ of $M^4$ corresponds to the cone over a Clifford torus in $\Sf^3$.
\item[(ii)] If $\sigma$ and $\tilde \sigma$  are distinct leaves of  ${\cal D}$ then $f_{\tilde\sigma}$ is congruent to $f_\sigma$ 
if and only if there exist a diffeomorphism $\psi\colon U_\sigma\to U_{\tilde \sigma}$ and  $\Theta\in G$ such that $\phi_{\tilde \sigma}\circ \psi=\Theta\circ \phi_{\sigma}$. In particular, $\tilde \sigma=\Theta(\sigma)$.
\item[(iii)] If $f\colon M^3\to \R^4$ is a minimal isometric immersion  with three distinct principal curvatures of a simply connected
 conformally flat Riemannian manifold, then either $f(M^3)$ is an open subset of the cone over a Clifford torus in $\Sf^3$ or there exist a leaf $\sigma$ of ${\cal D}$ and a local diffeomorphism $\rho\colon M^3\to V$ onto an open subset $V\subset U_\sigma$ 
 such that $f$ is congruent to  $f_\sigma\circ \rho$.
\end{itemize}

\section[Preliminaries]{Preliminaries}

In this section we discuss a local  characterization of conformally flat hypersurfaces $f\colon\,M^3\to\Q^{4}(c)$ with three distinct principal curvatures 
 and present  the examples of minimal conformally flat hypersurfaces in $\Q^{4}(c)$ with three distinct principal curvatures given by generalized cones over Clifford tori. 

\subsection{Characterization of conformally flat hypersurfaces}

 First we  recall the notion of holonomic hypersurfaces.  One says that a  hypersurface $f\colon M^{n} \to \Q^{n+1}(c)$  is   \emph{holonomic} if $M^n$ carries  global orthogonal coordinates  $(u_1,\ldots, u_{n})$  such that the coordinate vector fields 
$\d_j=\dfrac{\partial}{\partial u_j}$ diagonalize the second fundamental form $I\!I$ of $f$. 

Set $v_j=\|\d_j\|$
 and define $V_{j} \in C^{\infty}(M)$, $1\leq j\leq n$, by 
 $I\!I(\d_j, \d_j)=V_jv_j$, $1\leq j\leq n$. 
Then the first and second fundamental forms of $f$ are
\be\label{fundforms}
I=\sum_{i=1}^nv_i^2du_i^2\,\,\,\,\mbox{and}\,\,\,\,\,I\!I=\sum_{i=1}^nV_iv_idu_i^2.
\ee
Denote $v=(v_1,\ldots, v_n)$ and  $V=(V_{1},\ldots, V_n)$. We call  $(v,V)$ the pair associated to $f$. 
The next result is well known.

\begin{proposition}\label{fund}
 The triple $(v,h,V)$, where 
 $h_{ij}=\frac{1}{v_i}\frac{\partial v_j}{\partial u_i},$
 satisfies the  system of PDE's
 
  \begin{eqnarray}\label{sistema-hol}
 \left\{\begin{array}{l}
  (i) \ \dfrac{\partial v_i}{\partial u_j}=h_{ji}v_j,\,\,\,\,\,\,\,\,\,(ii) \ \dfrac{\partial h_{ik}}{\partial u_j}=h_{ij}h_{jk},\vspace{1ex}\\
  (iii) \ \dfrac{\partial h_{ij}}{\partial u_i} + \dfrac{\partial h_{ji}}{\partial u_j} + \sum_{k\neq i,j} h_{ki}h_{kj} + 
   V_{i}V_{j}+cv_iv_j=0,\vspace{1ex}\\
  (iv) \ \dfrac{\partial V_{i}}{\partial u_j}=h_{ji}V_{j},\,\,\,\,1\leq i \neq j \neq k \neq i\leq n.
  \end{array}\right.
 \end{eqnarray}
Conversely, if $(v,h,V)$ is a solution of $(\ref{sistema-hol})$ on a simply connected open subset   $U \subset \R^{n}$, with  $v_i> 0$
everywhere  for all $1\leq i\leq n$,
then there exists a holonomic hypersurface $f\colon U \to  \Q^{n+1}(c)$  whose first and second fundamental forms are given by  $(\ref{fundforms}).$
\end{proposition}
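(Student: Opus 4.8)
The plan is to view Proposition~\ref{fund} as the expression, in principal (holonomic) coordinates, of the fundamental theorem of hypersurfaces: the system (\ref{sistema-hol}) is nothing but a coordinate form of the Gauss and Codazzi equations for hypersurfaces of $\Q^{n+1}(c)$, once $v$ encodes the first fundamental form and $V$ the (diagonal) second fundamental form.

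For the direct statement, I would start with a holonomic $f$ with fundamental forms (\ref{fundforms}) and pass to the orthonormal frame $e_i=v_i^{-1}\d_i$, with dual coframe $\omega_i=v_i\,du_i$. Equation $(i)$ is immediate from the definition $h_{ij}=v_i^{-1}\d v_j/\d u_i$. Next I would write down the Levi-Civita connection of $I$: for a diagonal metric the Christoffel symbols are classical, and one checks that all connection coefficients of the frame $\{e_i\}$ are built from the $v_i$ and the $h_{ij}$. The principal curvatures are $\lambda_i=V_i/v_i$, so the Codazzi equation, evaluated on pairs of distinct coordinate fields, collapses exactly to $(iv)$. For the Gauss equation I would compute the curvature tensor $R$ of $I$ and use that, since the ambient is a space form and $I\!I$ is diagonal, $\langle R(e_i,e_j)e_j,e_i\rangle=c+\lambda_i\lambda_j$ while $\langle R(e_i,e_j)e_j,e_k\rangle=0$ for $i,j,k$ distinct; expressing both left-hand sides through the $h_{ij}$ and their first derivatives yields $(iii)$ and $(ii)$ respectively.

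For the converse I would define $I$ and $I\!I$ on $U$ by (\ref{fundforms}) using the given solution (legitimate because $v_i>0$), and then simply run the previous computation backwards: equations $(i)$--$(iv)$ are precisely what is needed to certify that the pair $(I,I\!I)$ satisfies the Gauss and Codazzi equations of $\Q^{n+1}(c)$. Since $U$ is simply connected, Bonnet's theorem (the fundamental theorem of submanifolds) then furnishes an isometric immersion $f\colon U\to\Q^{n+1}(c)$ with second fundamental form $I\!I$; as $I\!I$ is diagonalized by the $\d_j$ by construction, $f$ is holonomic with coordinates $(u_1,\dots,u_n)$ and associated pair $(v,V)$.

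The only genuine work is the curvature bookkeeping in the Gauss equation: one must carefully list the components $R_{ijij}$ and $R_{ijik}$ of a diagonal metric in terms of the $h_{ij}$ and their derivatives --- routine but error-prone --- and match them with $c$ plus the second-fundamental-form terms. This is the main (though entirely classical, cf.\ the theory of Lam\'e/triply orthogonal systems) obstacle; everything else, namely Codazzi, the definition of $h_{ij}$, and the appeal to Bonnet's theorem, is straightforward. I note that the term $cv_iv_j$ in $(iii)$ is exactly the contribution of the constant ambient curvature, so for $c=0$ the statement reduces to the classical flat case.
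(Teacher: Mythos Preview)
Your proposal is correct and is exactly the standard argument; the paper in fact gives no proof of Proposition~\ref{fund}, simply stating that ``the next result is well known''. The identification of (i) as a definition, of (iv) as the Codazzi equation, of (ii) and (iii) as the two types of Gauss-equation components for a diagonal metric in a space form, together with the appeal to Bonnet's theorem on a simply connected domain for the converse, is precisely the classical derivation the authors have in mind.
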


The following characterization of conformally flat hypersurfaces $f\colon\,M^3\to\Q^{4}(c)$ with three distinct principal curvatures  was given in \cite{ct}, improving a theorem due to Hertrich-Jeromin \cite{h-j}. 

\begin{theorem}\label{main3} Let $f\colon\,M^3\to\Q^{4}(c)$ 
be a  holonomic hypersurface whose associated pair $(v, V)$ satisfies 
\be\label{holo.flat}\sum_{i=1}^3\delta_iv_i^2=0, \,\,\,\,\,\,\sum_{i=1}^3\delta_iv_iV_i=0\,\,\,\,\,\mbox{and}\,\,\,\,\,\sum_{i=1}^3\delta_iV_i^2=1, 
\ee
where $(\delta_1, \delta_2, \delta_3)= (1,-1, 1)$. Then $M^3$ is conformally flat and $f$ has three distinct principal curvatures. 

Conversely, any conformally flat hypersurface $f\colon\,M^3\to\Q^{4}(c)$ with three distinct 
principal curvatures is locally a 
holonomic hypersurface whose associated pair $(v, V)$ satisfies $(\ref{holo.flat}).$
\end{theorem}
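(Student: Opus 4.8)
Both implications rest on the classical fact that a Riemannian $3$-manifold is conformally flat exactly when its Cotton tensor vanishes, the Weyl tensor being identically zero in dimension three. My plan is therefore to compute this obstruction for a holonomic hypersurface, reduce it to a first-order system in the associated pair $(v,V)$, and match that system against the algebraic relations (\ref{holo.flat}). I would work in the orthonormal principal frame $e_i=v_i^{-1}\partial_i$, where the principal curvatures are $\lambda_i=V_i/v_i$. From the Gauss equation the Ricci and scalar curvatures are polynomials in the $\lambda_i$, so the Schouten tensor $P=\Ric-\tfrac{R}{4}I$ is diagonal in this frame with eigenvalues $P_{ii}=\tfrac12(c+s)-\lambda_j\lambda_k$, where $s=\sum_{m<n}\lambda_m\lambda_n$ and $\{i,j,k\}=\{1,2,3\}$. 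The crucial simplification is that in orthogonal coordinates the rotation coefficients $\langle\nabla_{e_a}e_b,e_c\rangle$ vanish whenever $a,b,c$ are pairwise distinct; hence every component of the Cotton tensor carrying three distinct indices vanishes identically, and only the components $C_{kii}$ with $k\neq i$ survive. Using $\langle\nabla_{e_i}e_k,e_i\rangle=v_i^{-1}h_{ki}$ and $P_{kk}-P_{ii}=\lambda_j(\lambda_k-\lambda_i)$, conformal flatness reduces to the first-order system
\[
\frac{1}{v_k}\frac{\partial P_{ii}}{\partial u_k}=\frac{h_{ki}}{v_i}\,\lambda_j(\lambda_k-\lambda_i),\qquad k\neq i,
\]
which I will refer to as (CF).

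For the converse, I start with a conformally flat $f$ having three distinct principal curvatures. Each principal direction spans a line field, and the Codazzi equation forces the principal directions to constitute a holonomic (triply orthogonal) net, as it annihilates the three-distinct-index connection coefficients; thus, after passing to a neighborhood, $f$ is a holonomic hypersurface whose pair $(v,V)$ obeys (\ref{sistema-hol}) and, by the reduction above, (CF). I then observe that the relations (\ref{holo.flat}) form a Vandermonde system for the quantities $a_i:=\delta_iv_i^2$, namely $\sum_i a_i=0$, $\sum_i\lambda_i a_i=0$ and $\sum_i\lambda_i^2 a_i=1$; since the $\lambda_i$ are distinct its unique solution is $a_i=1/\prod_{m\neq i}(\lambda_i-\lambda_m)$, and ordering $\lambda_1<\lambda_2<\lambda_3$ reproduces exactly the signs $(\delta_1,\delta_2,\delta_3)=(1,-1,1)$. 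Hence (\ref{holo.flat}) is equivalent to the single normalization $v_i^2=\delta_i/\prod_{m\neq i}(\lambda_i-\lambda_m)$. The remaining point is that the only freedom preserving the holonomic structure is a reparametrization $u_i\mapsto\tilde u_i(u_i)$, under which $v_i$ and $V_i$ both rescale by $du_i/d\tilde u_i$ (leaving $\lambda_i$ fixed); such a change realizing the normalization exists precisely when $\delta_i\big/\big(v_i^2\prod_{m\neq i}(\lambda_i-\lambda_m)\big)$ depends on $u_i$ alone for each $i$, which I would establish by differentiating this expression along $\partial_k$ ($k\neq i$) and using the Codazzi relation $\partial_k\lambda_m=v_kv_m^{-1}h_{km}(\lambda_k-\lambda_m)$ together with (CF) to kill the mixed logarithmic derivatives.

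Conversely, given a holonomic pair satisfying (\ref{holo.flat}), the same Vandermonde identity shows the $\lambda_i$ are pairwise distinct: if two of them coincided, the second relation in (\ref{holo.flat}) would force the third principal curvature to agree as well, and the resulting total umbilicity contradicts $\sum_i\delta_iV_i^2=1$. Conformal flatness then follows by verifying (CF): differentiating $v_i^2=\delta_i/\prod_{m\neq i}(\lambda_i-\lambda_m)$ along $\partial_k$, comparing with $\partial_kv_i^2=2h_{ki}v_kv_i$ from (\ref{sistema-hol})(i), and substituting the Codazzi expressions for the $\partial_k\lambda_m$, should collapse exactly to the (CF) identities.

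I expect the core difficulty to lie precisely in this equivalence between (CF) and (\ref{holo.flat}). The subtlety is that $\partial_kP_{ii}$ contains the radial derivative $\partial_k\lambda_k$, which the Codazzi equations do not control, with the nonzero coefficient $\tfrac12(\lambda_i-\lambda_j)$; thus (CF) genuinely determines $\partial_k\lambda_k$, and reconciling the two equations obtained from the two admissible choices of $i\neq k$ at a fixed $k$ is what forces the algebraic constraints. Carrying out this elimination while keeping the symmetric-function bookkeeping organized, rather than expanding everything by brute force, is the delicate part of the argument; once it is done, both directions of the theorem close simultaneously.
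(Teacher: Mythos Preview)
The paper does not prove Theorem~\ref{main3}; it is quoted from \cite{ct} as a preliminary result, so there is no proof in the paper to compare your attempt against. That said, your overall strategy (vanishing of the Cotton tensor in the principal frame, reduction to the first-order system you call (CF), and recognition of (\ref{holo.flat}) as the Vandermonde system whose unique solution is $v_i^2=\delta_i/\prod_{m\neq i}(\lambda_i-\lambda_m)$) is sound and in fact matches the viewpoint the paper itself uses later: your formula for $v_i$ is exactly (\ref{eq:globalvj}), and your (CF) is precisely equation~(\ref{eq:dos}), which the paper cites from \cite{la} as the conformal-flatness criterion.

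There is, however, a genuine error in the converse direction. You assert that the Codazzi equation ``annihilates the three-distinct-index connection coefficients'' and hence forces the principal net to be holonomic. This is false: for distinct $i,j,k$, Codazzi only gives proportionalities of the type $(\lambda_k-\lambda_i)\<\nabla_{e_j}e_k,e_i\>=(\lambda_j-\lambda_i)\<\nabla_{e_k}e_j,e_i\>$, which leave a one-parameter family of nonzero solutions for the three independent coefficients $\<\nabla_{e_a}e_b,e_c\>$. What actually kills these coefficients is conformal flatness itself: the vanishing of the components of the Cotton tensor carrying three distinct indices, together with Codazzi, forces $\<\nabla_{e_i}e_j,e_k\>=0$, which is exactly relation~(\ref{eq:uno}) in the paper. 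So in your argument you must first extract (\ref{eq:uno}) from the Cotton tensor, then conclude holonomicity; only afterwards does the reduction to your system (CF) (equivalently (\ref{eq:dos})) become valid. Your computation that ``every component of the Cotton tensor carrying three distinct indices vanishes identically'' already presupposes the orthogonal-coordinate identity $\<\nabla_{e_a}e_b,e_c\>=0$, so invoking it before holonomicity is established is circular.

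Apart from this, the step you flag as delicate --- showing that (CF) together with Codazzi makes $v_i^2\prod_{m\neq i}(\lambda_i-\lambda_m)$ a function of $u_i$ alone --- is indeed the heart of the matter, and you have correctly identified but not carried out the computation. Solving (CF) for $\partial_k\lambda_k$ and substituting into $\partial_k\log w_i$ does yield $\partial_k\log v_i$; this closes both directions once the holonomicity issue above is repaired.
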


It will be convenient to use the following equivalent version of Theorem~\ref{main3}.

\begin{corollary}\label{le:asspair}
Let $f\colon M^3\to \Q^4(c)$ be a holonomic hypersurface whose associated pair $(v,V)$ satisfies 
\begin{equation}\label{holo.flat.med.2}
\begin{array}{lcl}
v_2^2=v_1^2+v_3^2,  & \quad &
\displaystyle{V_2=-\frac{1}{3}\Big(\frac{v_1}{v_3}-\frac{v_3}{v_1}\Big) + \frac{v_2}{3}H}, \vspace{.2cm} \\
\displaystyle{V_1=-\frac{1}{3}\Big(\frac{v_2}{v_3}+\frac{v_3}{v_2}\Big) + \frac{v_1}{3}H}, & \quad &
\displaystyle{V_3=\frac{1}{3}\Big(\frac{v_1}{v_2}+\frac{v_2}{v_1}\Big) + \frac{v_3}{3}H},
\end{array}
\end{equation}
where   $H$ is
the mean curvature function of $f$. Then $M^3$ is conformally flat and $f$ has three distinct principal curvatures. 

Conversely, any conformally flat hypersurface $f\colon\,M^3\to\Q^{4}(c)$ with three distinct 
principal curvatures is locally a 
holonomic hypersurface whose associated pair $(v, V)$ satisfies $(\ref{holo.flat.med.2}).$
\end{corollary}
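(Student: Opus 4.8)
The plan is to deduce Corollary~\ref{le:asspair} directly from Theorem~\ref{main3} by checking that, for an associated pair $(v,V)$ with all $v_i>0$, the algebraic conditions (\ref{holo.flat}) (with $(\delta_1,\delta_2,\delta_3)=(1,-1,1)$) and (\ref{holo.flat.med.2}) are equivalent; once this is done, the two assertions of the corollary follow verbatim from those of Theorem~\ref{main3}. The first equation of (\ref{holo.flat}), $\sum_i\delta_iv_i^2=0$, is literally $v_2^2=v_1^2+v_3^2$, so I work under that standing assumption and compare the conditions on $V$. Note also that if $(v,V)$ satisfies (\ref{holo.flat.med.2}) then necessarily $\sum_iV_i/v_i=H$ (the $H$-independent parts cancel after using $v_2^2=v_1^2+v_3^2$), so $H$ is the trace of the shape operator, i.e.\ the mean curvature of $f$; conversely, in the forward direction I will set $H:=\sum_iV_i/v_i$ from the start.

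The computational core is to isolate the ``$H$-free part'' of $V$. Put $W_i:=V_i-\tfrac{v_i}{3}H$, and let $W^0$ be the vector whose components are the $H$-independent terms on the right of (\ref{holo.flat.med.2}): $W^0_1=-\tfrac13(v_2/v_3+v_3/v_2)$, $W^0_2=-\tfrac13(v_1/v_3-v_3/v_1)$, $W^0_3=\tfrac13(v_1/v_2+v_2/v_1)$. A direct substitution, using only $v_2^2=v_1^2+v_3^2$ and $v_i>0$, shows
\[
\sum_{i=1}^3\delta_iv_iW^0_i=0,\qquad \sum_{i=1}^3\frac{W^0_i}{v_i}=0,\qquad \sum_{i=1}^3\delta_i(W^0_i)^2=1,
\]
the last being the polynomial identity $v_1^2(v_2^2+v_3^2)^2-v_2^2(v_1^2-v_3^2)^2+v_3^2(v_1^2+v_2^2)^2=9v_1^2v_2^2v_3^2$ modulo $v_2^2=v_1^2+v_3^2$. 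Since $\sum_i\delta_iv_i^2=0$, the substitution $V_i=\tfrac{v_i}{3}H+W_i$ turns the remaining conditions of (\ref{holo.flat}) into $\sum_i\delta_iv_iW_i=0$, $\sum_iW_i/v_i=0$, and $\sum_i\delta_iW_i^2=1$. Hence, if (\ref{holo.flat.med.2}) holds, then $W=W^0$ and the three displayed identities are exactly (\ref{holo.flat}); combined with the first half of Theorem~\ref{main3}, this yields the converse part of Corollary~\ref{le:asspair}.

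For the implication (\ref{holo.flat})$\,\Rightarrow\,$(\ref{holo.flat.med.2}), observe that the coefficient vectors $(v_1,-v_2,v_3)$ and $(1/v_1,1/v_2,1/v_3)$ of the two linear relations $\sum_i\delta_iv_iW_i=0$ and $\sum_iW_i/v_i=0$ are linearly independent (proportionality would force $v_1^2=-v_2^2$), so their common solution set is a line, which by the previous paragraph is $\R\,W^0$. Writing $W=sW^0$ and using $\sum_i\delta_iv_i^2=0$, one gets $\sum_i\delta_iV_i^2=s^2\sum_i\delta_i(W^0_i)^2=s^2$, so the last equation of (\ref{holo.flat}) forces $s=\pm1$. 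The case $s=1$ is precisely (\ref{holo.flat.med.2}); to dispose of $s=-1$, apply the relabeling $u_1\leftrightarrow u_3$, which leaves all of (\ref{holo.flat}) invariant (because $\delta_1=\delta_3$) and carries $(W^0_1,W^0_2,W^0_3)$ to $(-W^0_3,-W^0_2,-W^0_1)$, hence interchanges the two cases. Together with the converse part of Theorem~\ref{main3} (which provides the holonomic description and (\ref{holo.flat})), this gives the converse statement of Corollary~\ref{le:asspair}.

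I do not expect a genuine conceptual obstacle here: the main effort is careful bookkeeping in the substitutions, in particular verifying the cubic identity (in the variables $v_i^2$) behind $\sum_i\delta_i(W^0_i)^2=1$. The one point that needs attention is making sure that the sole ambiguity surviving the linear-algebra step, the sign $\pm W^0$, is absorbed by the admissible coordinate relabeling $u_1\leftrightarrow u_3$, so that it does not affect the hypersurface $f$ itself.
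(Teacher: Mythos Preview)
Your argument is correct and amounts to the same algebra as the paper's, only packaged differently. The paper reads (\ref{holo.flat}) in Lorentzian terms: in $\Les^3$ the vector $v$ is null, $V$ is unit spacelike and lies in $v^\perp=\spa\{v,w\}$ with $w=(-v_3,0,v_1)$, so $V=av+bw$ with $b^2v_2^2=1$; plugging into $H=\sum_iV_i/v_i$ fixes $a=V_2/v_2$ and produces (\ref{holo.flat.med.2}) up to the sign $\lambda=\pm1$ of $bv_2$. You instead subtract the trace part first, setting $W=V-\tfrac{H}{3}v$, and use ordinary linear algebra on the two linear constraints to force $W=\pm W^0$; your $W^0$ is precisely $\tfrac{1}{v_2}w$ in the paper's notation, so the two decompositions match. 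The only substantive difference is how the residual sign is absorbed: the paper reverses the orientation of the unit normal (which sends $V\mapsto -V$, $H\mapsto -H$, hence $W\mapsto -W$), while you use the coordinate swap $u_1\leftrightarrow u_3$; both are admissible normalizations of the holonomic chart. One small wording slip: at the end of your second paragraph, ``the converse part of Corollary~\ref{le:asspair}'' should read ``the first assertion''---the converse is what your third paragraph establishes.
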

\begin{proof} It suffices to show that equations (\ref{holo.flat}) together with 
\begin{equation}\label{lem1.med}
\begin{array}{l}
H=\sum_{i=1}^3{V_iv_i^{-1}}
\end{array}
\end{equation}
are equivalent to (\ref{holo.flat.med.2}). For that, consider the Minkowski space $\Les^3$ endowed with the Lorentz inner product 
$$\<(x_1,x_2,x_3), (y_1,y_2,y_3)\> = x_1y_1-x_2y_2+x_3y_3.$$
Then the conditions in  (\ref{holo.flat}) say that  $v=(v_1, v_2, v_3)$ and $V=(V_1, V_2, V_3)$ are  orthogonal with respect to such inner product,  $v$ is light-like and $V$ is a unit space-like vector. Since $w=(-v_3,0,v_1)\in \Les^3$ is orthogonal to $v,$ we have  $v^\perp=\spa\{v,w\}.$ As $V\in v^{\perp},$ we can write $V=av+bw$ for some  $a,b \in C^{\infty}(M^3)$. Note that  $V_2=av_2.$ 
Using  $(\ref{holo.flat})$ we obtain
 $$1=\langle V,V \rangle = \langle av+bw,av+bw \rangle=b^2\langle w,w \rangle=b^2v_2^2.$$ 
 Thus $V=\frac{V_2}{v_2}v+\frac{\lambda}{v_2}w,$ with $\lambda=\pm 1.$ Therefore 
\begin{equation}\label{V1.e.V3.med.}
\begin{array}{lll}
\displaystyle{V_1=\frac{1}{v_2}(V_2v_1-\lambda v_3)}\qquad \mbox{and} \qquad \displaystyle{V_3=\frac{1}{v_2}(V_2v_3+\lambda v_1)}.
\end{array}
\end{equation}
Substituting  (\ref{V1.e.V3.med.}) in (\ref{lem1.med}) 
we obtain
\begin{equation}\label{V2.med.}
\begin{array}{l}
\displaystyle{V_2=-\frac{\lambda}{3}\Big(\frac{v_1}{v_3}-\frac{v_3}{v_1}\Big) + \frac{v_2}{3}H}.
\end{array}
\end{equation}
Substituting  (\ref{V2.med.}) in (\ref{V1.e.V3.med.}) yields 
$$V_1=-\frac{\lambda}{3}\Big(\frac{v_2}{v_3}+\frac{v_3}{v_2}\Big) + \frac{v_1}{3}H \,\,\,\,\,\,\mbox{and}\,\,\,\,\,
V_3=\frac{\lambda}{3}\Big(\frac{v_1}{v_2}+\frac{v_2}{v_1}\Big) + \frac{v_3}{3}H,$$
and changing the orientation, if necessary, we may assume that $\lambda=1$.\end{proof} 

\subsection{Generalized cones over Clifford tori}

First we show that, if  $g\colon \R^2 \to \Sf^3\subset \R^4$ is the Clifford torus parametrized by 
\be\label{eq:g}
  g(x_1,x_2)=\frac{1}{\sqrt{2}}(\cos (\sqrt{2} x_1), \sin (\sqrt{2} x_1),\cos (\sqrt{2} x_2), \sin (\sqrt{2} x_2)),
  \ee
   then the standard  cone
$F\colon (0, \infty)\times \R^2\to \R^4$ over $g$ given by
$$F(s,x)=sg(x),\,\,\,\,x=(x_1, x_2),$$
is a minimal conformally flat hypersurface.

The first and second fundamental forms of $F$  with respect to the unit normal vector field 
$$\eta(s, x_1, x_2)=\frac{1}{\sqrt{2}}(\cos (\sqrt{2} x_1), \sin (\sqrt{2} x_1),-\cos (\sqrt{2} x_2), -\sin (\sqrt{2} x_2))$$
are
$$
\begin{array}{lll}
I=ds^2+ s^2(dx_1^2+dx_2^2) \qquad \text{and} \qquad I\!I= s(-dx^2_1+dx^2_2).
\end{array}
$$
In terms of the new coordinates $u_1, u_2, u_3$, related to $s, x_1, x_2$ by
$$
u_2=\log s,\,\,\,\,u_1=\sqrt{2}x_1\,\,\,\,\mbox{and}\,\,\,\,\,u_3=\sqrt{2}x_2,
$$
the first and second fundamental forms of $F$ become
$$I=\frac{e^{2u_2}}{2}(du_1^2+2du_2^2+du_3^2)\,\,\,\,\,\mbox{and}\,\,\,\,\,I\!I=\frac{e^{u_2}}{2}(-du_1^2+du_3^2),
$$
hence $F$ is a minimal conformally flat hypersurface with three distinct principal curvatures, one of which being zero.\vspace{1ex}

The preceding example  can be extended to the case in which the ambient space is any space form, yielding  examples of minimal conformally flat hypersurfaces $f\colon M^3\to \Q^4(c)$ with three distinct principal curvatures  also for $c\neq 0$. \vspace{.5ex}

 Start with the Clifford torus $g\colon \R^2\to \Sf^3\subset \R^4$ parametrized by (\ref{eq:g}).
  If $c>0$, define $F\colon (0, \pi/\sqrt{c})\times \R^2\to \Sf^4(c)\subset \R^5=\R^4\times \R$ by
$$F(s,x)=\frac{1}{\sqrt{c}}(\cos ({\sqrt{c}}s) e_5+\sin ({\sqrt{c}}s) g(x)),$$
where $x=(x_1, x_2)$ and $e_5$ is a unit vector spanning the factor $\R$ in the orthogonal decomposition $\R^5=\R^4\times \R$.

Notice that, for each fixed $s=s_0$,  the map $F_{s_0}\colon \R^2\to \Sf^4(c)$, given by $F_{s_0}(x)=F(s_0, x)$, is also a Clifford torus 
in an umbilical hypersurface  $\Sf^3(\tilde c)\subset \Sf^4(c)$ with curvature $\tilde c=c/\sin^2(\sqrt{c}s_0)$, which has 
$$N_{s_0}=F_*\frac{\partial}{\partial s}|_{s=s_0}=-\sin(\sqrt{c}s_0)e_5+\cos(\sqrt{c}s_0)g$$
as a unit normal vector field along $F_{s_0}$. Notice also that
$$F(s+s_0, x)=\cos(\sqrt{c}s)F_{s_0}(x)+\sin(\sqrt{c}s)N_{s_0}(x),$$
thus $s\mapsto F(s,x)$ parametrizes the geodesic in $\Sf^4(c)$ through $F_{s_0}(x)$ tangent to $N_{s_0}(x)$ at $F_{s_0}(x)$.
Hence $F$ is a generalized cone over $F_{s_0}$.

The first and second fundamental forms of $F$  with respect to the unit normal vector field 
$$\eta(s, x_1, x_2)=\frac{1}{\sqrt{2}}(\cos (\sqrt{2} x_1), \sin (\sqrt{2} x_1),-\cos (\sqrt{2} x_2), -\sin (\sqrt{2} x_2), 0)$$
are
$$
\begin{array}{lll}
\displaystyle{I=ds^2+ \frac{1}{c}\sin^2 ({\sqrt{c}}s)(dx_1^2+dx_2^2)} \;\; \mbox{and} \;\; \displaystyle{I\!I=\frac{\sin ({\sqrt{c}}s)}{\sqrt{c}}(-dx^2_1+dx^2_2)}.
\end{array}
$$
In terms of the new coordinates $u_1, u_2, u_3$, related to $s, x_1, x_2$ by
\be\label{eq:uisxis}
\frac{du_2}{ds}=\frac{\sqrt{c}}{\sin(\sqrt{c}s)},\,\,\,\,u_1=\sqrt{2}x_1\,\,\,\,\mbox{and}\,\,\,\,\,u_3=\sqrt{2}x_2,\ee
the first and second fundamental forms of $F$ become
\be\label{eq:forms}
I=\frac{\sin^2 \theta}{2c}(du_1^2+2du_2^2+du_3^2)\,\,\,\,\,\mbox{and}\,\,\,\,\,I\!I=\frac{\sin \theta}{2\sqrt{c}}(-du_1^2+du_3^2),
\ee
where $\theta=\sqrt{c}s$, which, in view of the first equation in (\ref{eq:uisxis}), 
  satisfies 
$$\frac{d\theta}{du_2}=\sin \theta.$$
It follows from (\ref{eq:forms}) that $F$ is a minimal conformally flat hypersurface.

If $c<0$, define
$F\colon (0, \infty)\times \R^2\to \Hy^4(c)\subset \Les^5$ by
$$F(s,x)=\frac{1}{\sqrt{-c}}(\cosh ({\sqrt{-c}}s) e_5+\sinh ({\sqrt{-c}}s) g(x)),$$
where $x=(x_1, x_2)$,  $e_5$ is a unit time-like vector in $\Les^5$ and $e_5^\perp$ is identified with $\R^4$. 

As in the previous case, for each fixed $s=s_0$  the map $F_{s_0}\colon \R^2\to \Hy^4(c)$, given by $F_{s_0}(x)=F(s_0, x)$, is also a Clifford torus 
in an umbilical hypersurface  $\Sf^3(\tilde c)\subset \Hy^4(c)$ with curvature $\tilde c=-c/\sinh^2(\sqrt{-c}s_0)$, and 
$F$ is a generalized cone over $F_{s_0}$.

Now  the first and second fundamental forms of $F$ 
are
$$I=ds^2+ \frac{1}{-c}\sinh^2 ({\sqrt{-c}}s)(dx_1^2+dx_2^2)$$
and
$$I\!I=\frac{\sinh ({\sqrt{-c}}s)}{\sqrt{-c}}(-dx^2_1+dx^2_2).$$
In terms of the new coordinates $u_1, u_2, u_3$, related to $s, x_1, x_2$ by
$$
\frac{du_2}{ds}=\frac{\sqrt{-c}}{\sinh(\sqrt{-c}s)},\,\,\,\,u_1=\sqrt{2}x_1\,\,\,\,\mbox{and}\,\,\,\,\,u_3=\sqrt{2}x_2,
$$
they become
\be\label{eq:formsb}I=\frac{\sinh^2 \theta}{-2c}(du_1^2+2du_2^2+du_3^2)\,\,\,\,\,\mbox{and}\,\,\,\,\,I\!I=\frac{\sinh \theta}{2\sqrt{-c}}(-du_1^2+du_3^2),\ee
where $\theta(s)=\sqrt{-c}s$
satisfies 
$$\frac{d\theta}{du_2}=\sinh \theta.$$
It follows from  (\ref{eq:formsb}) that $F$ is a minimal conformally flat hypersurface with three distinct principal curvatures,
one of which being zero.

\section{The proofs of Theorems \ref{thm:cmc} and \ref{thm:minimalcneq0}}

First we derive a system of PDE's for new unknown functions associated to a conformally flat hypersurface $f\colon M^3\to \mathbb{Q}^4(c)$ with three distinct principal curvatures  under the assumption that 
$f$ has constant mean curvature.

\begin{proposition}\label{flat.med.reduz.result.}
Let $f\colon M^3\to \mathbb{Q}^4(c)$ be a holonomic hypersurface with constant mean curvature $H$ whose associated pair  $(v,V)$ satisfies $(\ref{holo.flat.med.2})$. Set $$(\alpha_1,\alpha_2,\alpha_3)=\Big(\frac{1}{v_2}\frac{\partial v_2}{\partial u_1},\frac{1}{v_3}\frac{\partial v_3}{\partial u_2},\frac{1}{v_1}\frac{\partial v_1}{\partial u_3}\Big).$$ 
Then $v_1, v_2, v_3, \alpha_1, \alpha_2, \alpha_3$ satisfy the differential equations  
\be\label{eq:e0}\frac{\partial v_1}{\partial u_1}=\frac{v_1}{v_2^4}(v_2^4+v_2^2v_3^2+v_3^4)\alpha_1,\,\,\,\,\,\frac{\partial v_2}{\partial u_1}=v_2\alpha_1,\,\,\,\,\,\frac{\partial v_3}{\partial u_1}=\frac{v_3^5}{v_2^4}\alpha_1,\ee
\be\label{eq:e0a}\frac{\partial v_1}{\partial u_2}=\frac{v_1^5}{v_3^4}\alpha_2,\,\,\,\,\,\frac{\partial v_2}{\partial u_2}=\frac{v_2}{v_3^4}(v_1^4-v_1^2v_3^2+v_3^4)\alpha_2,\,\,\,\,\,\frac{\partial v_3}{\partial u_2}=v_3\alpha_2,\ee
\be\label{eq:e0b}\frac{\partial v_1}{\partial u_3}= v_1\alpha_3,\,\,\,\,\,\frac{\partial v_2}{\partial u_3}=\frac{v_2^5}{v_1^4}\alpha_3,\,\,\,\,\,\frac{\partial v_3}{\partial u_3}=\frac{v_3}{v_1^4}(v_1^4+v_1^2v_2^2+v_2^4)\alpha_3,\ee
\be\label{eq:e1}\begin{array}{l}
{\displaystyle \frac{\partial \alpha_1}{\partial u_1}  =  \frac{1}{v_2^4}(3v_2^4-v_3^4)\alpha_1^2 - \frac{v_1^6}{v_3^8}(3v_1^2-2v_3^2)\alpha_2^2 + \frac{v_2^4}{v_1^2v_3^4}(3v_1^2+2v_2^2)\alpha_3^2 }  \vspace{.17cm} \\  
{\displaystyle +\ \frac{1}{9v_3^4}(5v_1^4+2v_2^2v_3^2)-  \frac{v_1^2v_2^2}{v_3^2}c +  \frac{v_1v_2}{18v_3^3}(v_2^2+v_3^2-2v_1v_2v_3H)H,} \end{array}
\ee
\be\label{eq:e1a}\frac{\partial \alpha_2}{\partial u_1}=2 \frac{v_1^2}{v_2^2}\alpha_1\alpha_2,\,\,\,\,\,\frac{\partial \alpha_3}{\partial u_1}=
2 \frac{v_3^2}{v_2^4}(4v_3^2+v_1^2)\alpha_1\alpha_3,\ee
\be\label{eq:e2}\begin{array}{l}
{\displaystyle \frac{\partial \alpha_2}{\partial u_2} = -\frac{v_3^4}{v_1^4v_2^2}(3v_2^2 + 2v_3^2) \alpha_1^2 -\frac{1}{v_3^4}(v_1^4 - 3v_3^4)\alpha_2^2 - \frac{v_2^6}{v_1^8}(2v_1^2+3v_2^2)\alpha_3^2 } \vspace{.17cm} \\ 
{\displaystyle -\frac{1}{9v_1^4}(5v_2^4-2v_1^2v_3^2)  +  \frac{v_2^2v_3^2}{v_1^2}c - \frac{v_2v_3}{18v_1^3}(v_1^2-v_3^2-2v_1v_2v_3H)H,} 
\end{array}
\ee
\be\label{eq:e2a}\frac{\partial \alpha_1}{\partial u_2}=2 \frac{v_1^2}{v_3^4}(4v_1^2- v_2^2)\alpha_1\alpha_2,\,\,\,\,\,\frac{\partial \alpha_3}{\partial u_2}=2 \frac{v_2^2}{v_3^2}\alpha_2\alpha_3,\ee
\be\label{eq:e2b}\frac{\partial \alpha_1}{\partial u_3}=-2 \frac{v_3^2}{v_1^2}\alpha_1\alpha_3,\,\,\,\,\,\frac{\partial \alpha_2}{\partial u_3}=2 \frac{v_2^2}{v_1^4}(4v_2^2- v_3^2)\alpha_2\alpha_3\ee
and
\be\label{eq:e3}\begin{array}{l}
{\displaystyle \frac{\partial \alpha_3}{\partial u_3} = \frac{v_3^6}{v_2^8}(2v_2^2+3v_3^2)\alpha_1^2 + \frac{v_1^4}{v_2^4v_3^2}(2v_1^2-3v_3^2)\alpha_2^2 + \frac{1}{v_1^4}(3v_1^4-v_2^4)\alpha_3^2 } \vspace{.17cm} \\ 
{\displaystyle + \frac{1}{9v_2^4}(5v_3^4+2v_1^2v_2^2) - \frac{v_1^2v_3^2}{v_2^2}c - \frac{v_1v_3}{18v_2^3}(v_1^2+v_2^2+2v_1v_2v_3H)H,} 
\end{array}
\ee
as well as the algebraic relations
\begin{equation}\label{equ.alg.f.e.}
\begin{array}{l}
\displaystyle{\Big(\frac{30v_1}{v_2v_3^9} F  - \frac{4v_1^4v_2^2}{v_3^6}(v_1^2-v_3^2)c + \frac{v_1^3v_2}{18v_3^7} m_2 H \Big)\alpha_2 =0}, \vspace{-.25cm} \\ \\
\displaystyle{\Big( \frac{30v_2}{v_1^9v_3}F  + \frac{4v_2^4v_3^2}{v_1^6}(v_1^2+v_2^2)c + \frac{v_3v_2^3}{18v_1^7} m_3 H \Big)\alpha_3=0} , \vspace{-.25cm} \\ \\
\displaystyle{\Big( \frac{30v_3}{v_1v_2^9} F  - \frac{4v_1^2v_3^4}{v_2^6}(v_2^2+v_3^2)c + \frac{v_1v_3^3}{18v_2^7}m_1 H \Big)\alpha_1=0 ,}
\end{array}
\end{equation}
 where
\begin{equation*}
\begin{array}{rcl}
m_1 &=& (v_2^2+4v_3^2)(4v_2^2+v_3^2)-8v_1v_2v_3(v_2^2+v_3^2)H,\vspace{.2cm} \\
m_2 &=& (v_1^2-4v_3^2)(4v_1^2-v_3^2)-8v_1v_2v_3(v_1^2-v_3^2)H, \vspace{.2cm}\\
m_3 &=& (v_1^2+4v_2^2)(4v_1^2+v_2^2)+8v_1v_2v_3(v_1^2+v_2^2)H, \vspace{.2cm}\\
F &=& \displaystyle{\frac{1}{27v_1v_2v_3}\big[9v_1^2v_3^8(v_2^2+v_3^2)\alpha_1^2  +   9v_1^8v_2^2(v_1^2-v_3^2)\alpha_2^2  - 9v_2^8v_3^2(v_1^2+v_2^2)\alpha_3^2} \vspace{.2cm}\\
 &&  - \ v_1^2v_2^2v_3^2(2v_1^2v_2^4-2v_1^2v_3^4-2v_2^2v_3^4-v_1^2v_2^2v_3^2)\big].
\end{array}
\end{equation*}
\end{proposition}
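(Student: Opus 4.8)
The plan is to derive all of \eqref{eq:e0}--\eqref{equ.alg.f.e.} directly from the structure equations \eqref{sistema-hol} of the holonomic hypersurface $f$ (Proposition~\ref{fund}), after substituting the explicit formulas \eqref{holo.flat.med.2} for the $V_i$ and using that $H$ is constant. Throughout, write $\partial_j=\partial/\partial u_j$ and $h_{ij}=\frac{1}{v_i}\partial_i v_j$ as in Proposition~\ref{fund}. The device that makes the system close up is that, by \eqref{holo.flat.med.2}, each $V_i$ is an explicit function of $(v_1,v_2,v_3,H)$ with $H$ constant, so every $\partial_j V_i$ rewrites by the chain rule as a combination of the $\partial_j v_k$; this is what couples equation~(iv) of \eqref{sistema-hol} back to the $v_i$'s. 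Observe also that, straight from the definition of the $\alpha_i$, the ``cyclic'' Christoffel symbols are $h_{12}=(v_2/v_1)\alpha_1$, $h_{23}=(v_3/v_2)\alpha_2$, $h_{31}=(v_1/v_3)\alpha_3$, so that the only data still to be determined are the remaining three symbols $h_{13},h_{21},h_{32}$ and the three diagonal derivatives $\partial_1 v_1,\partial_2 v_2,\partial_3 v_3$, none of which \eqref{sistema-hol} controls directly.

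To pin these down I would fix a coordinate direction, say $u_1$: differentiating $v_2^2=v_1^2+v_3^2$ (the first equation of \eqref{holo.flat.med.2}) with respect to $u_1$ gives one linear relation between $\partial_1 v_1$ and $\partial_1 v_3=v_1 h_{13}$, and equation~(iv) of \eqref{sistema-hol} with $(j,i)=(1,2)$, namely $\partial_1 V_2=h_{12}V_1$, provides a second one after the chain-rule rewriting above; solving the resulting $2\times 2$ linear system produces \eqref{eq:e0}, and one verifies that equation~(iv) with $(j,i)=(1,3)$ is then automatically satisfied (this is the point at which all the $H$-terms cancel). Running the same argument in the directions $u_2,u_3$ yields \eqref{eq:e0a}, \eqref{eq:e0b}, and in particular $h_{13}=(v_3^5/v_1v_2^4)\alpha_1$, $h_{21}=(v_1^5/v_2v_3^4)\alpha_2$, $h_{32}=(v_2^5/v_1^4v_3)\alpha_3$.

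With all the $\partial_j v_k$ now known functions of $v$ and $\alpha$, the off-diagonal equations \eqref{eq:e1a}, \eqref{eq:e2a}, \eqref{eq:e2b} follow from the six instances of equation~(ii) of \eqref{sistema-hol}: in $\partial_j h_{ik}=h_{ij}h_{jk}$ the symbol $h_{ik}$ is expressible through a single $\alpha_m$, so after expanding the left side via \eqref{eq:e0}--\eqref{eq:e0b} the equation becomes linear in the corresponding $\partial_j\alpha_m$ and can be solved. The remaining, and genuinely substantial, equations \eqref{eq:e1}, \eqref{eq:e2}, \eqref{eq:e3} come from the three instances of equation~(iii) of \eqref{sistema-hol}, one for each unordered pair $\{i,j\}$: in $\partial_i h_{ij}+\partial_j h_{ji}+\sum_{k\ne i,j}h_{ki}h_{kj}+V_iV_j+cv_iv_j=0$ the terms $h_{ki}h_{kj}$, $V_iV_j$ and $cv_iv_j$ are by then known functions of $v$ and $\alpha$ (this is where $c$ and $H$ enter the answers), while $\partial_i h_{ij}+\partial_j h_{ji}$, after the product rule and \eqref{eq:e0}--\eqref{eq:e0b}, is linear in two of $\partial_1\alpha_1,\partial_2\alpha_2,\partial_3\alpha_3$; the three equations thus form a $3\times 3$ linear system whose determinant equals $2$, and inverting it gives \eqref{eq:e1}--\eqref{eq:e3}. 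All of this is to be simplified repeatedly using $v_2^2=v_1^2+v_3^2$.

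Finally, the algebraic identities \eqref{equ.alg.f.e.} I expect to come out as the integrability conditions of the first-order system just derived: since $\partial_j v_k$ and $\partial_j\alpha_k$ are now all explicit in $v_1,v_2,v_3,\alpha_1,\alpha_2,\alpha_3$ (and $c,H$), one imposes $\partial_i\partial_j(\cdot)=\partial_j\partial_i(\cdot)$; for the $v_k$ and for most index pairs for the $\alpha_k$ this is automatic modulo $v_2^2=v_1^2+v_3^2$, but the condition $\partial_2\partial_1\alpha_1=\partial_1\partial_2\alpha_1$, together with its two cyclic analogues, is not, and after a long simplification---in which the pure-$\alpha$ terms assemble into $F$ and the remainder into the displayed $c$- and $H$-coefficients---it reduces precisely to the three relations in \eqref{equ.alg.f.e.}. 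I expect the only genuine obstacle to be the sheer size of this reduction and of the derivation of \eqref{eq:e1}--\eqref{eq:e3}: conceptually everything is a direct substitution into \eqref{sistema-hol}, but the stated right-hand sides, and above all the coefficients in \eqref{equ.alg.f.e.}, are reached only after many rounds of clearing denominators and eliminating $v_1^2,v_3^2$ via $v_2^2=v_1^2+v_3^2$, so the real work is organizing that reduction so the answers land in the given normal form.
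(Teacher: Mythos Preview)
Your approach is essentially the paper's: it likewise augments \eqref{sistema-hol} with the derivatives of the algebraic constraints in \eqref{holo.flat} (your ``differentiate $v_2^2=v_1^2+v_3^2$ and combine with~(iv)'' is exactly how it obtains the key proportionality $v_j^5h_{ki}=v_i^5h_{kj}$, i.e.\ your formulas for $h_{13},h_{21},h_{32}$), then uses the three Gauss equations~(iii) as a $3\times 3$ linear system for $\partial_1\alpha_1,\partial_2\alpha_2,\partial_3\alpha_3$, and finally reads off \eqref{equ.alg.f.e.} from precisely the three mixed-partial identities $\partial_2\partial_1\alpha_1=\partial_1\partial_2\alpha_1$, $\partial_3\partial_2\alpha_2=\partial_2\partial_3\alpha_2$, $\partial_1\partial_3\alpha_3=\partial_3\partial_1\alpha_3$ that you name. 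One small correction: the determinant of that $3\times 3$ system is not the constant~$2$ but a nonvanishing rational function of the $v_i$ (the paper writes the system as $MP=-B$ with $M$ displayed explicitly).
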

\proof
The triple $(v,h,V),$ where $h_{ij}=\frac{1}{v_i}\frac{\partial v_j}{\partial u_i},$ satisfies the system of PDE's
\begin{equation}\label{hol.flat.f.esp.}
\left\{\begin{array}{l}\displaystyle{
\!\!\!(i) \frac{\partial v_i}{\partial u_j}= h_{ji}v_j,  \,\,\,\,\,\,\, (ii) \frac{\partial h_{ij}}{\partial u_i} + \frac{\partial h_{ji}}{\partial u_j} +h_{ki}h_{kj}+ V_iV_j + cv_iv_j =0}, \vspace{.18cm}\\
\!\!\! \displaystyle{(iii) \frac{\partial h_{ik}}{\partial u_j}=h_{ij}h_{jk}, \,\,\,\,\,\,\, (iv) \frac{\partial V_i}{\partial u_j}=h_{ji}V_j, \hspace{.5cm} 1\leq i\neq j\neq k\neq i \leq 3,\vspace{.18cm}}\\
\!\!\! \displaystyle{(v) \delta_i\frac{\partial v_i}{\partial u_i}+ \delta_jh_{ij}v_j +\delta_kh_{ik}v_k=0, \, \ \ (vi) \delta_i\frac{\partial V_i}{\partial u_i}+ \delta_jh_{ij}V_j +\delta_kh_{ik}V_k=0.}
\end{array}\right. \!\!\!\!\!\!\!
\end{equation}
Equations  $(i)$, $(ii)$, $(iii)$ and $(iv)$ are due to the fact  that  $f$ is a holonomic hypersurface with $(v,V)$ as its associated pair, and  $(v)$ and $(vi)$ follow by differentiating (\ref{holo.flat}).

Using   (\ref{holo.flat.med.2}) and equations  $(i)$, $(iv)$, $(v)$ and $(vi)$ in  (\ref{hol.flat.f.esp.}) one can show that
\begin{equation}\label{rel.h-ki-kj}
 v_j^5h_{ki}=v_i^5h_{kj}, \quad 1\leq i\neq j\neq k\neq i\leq 3.
\end{equation}
From $(i)$ and $(v)$ of (\ref{hol.flat.f.esp.}), together with (\ref{rel.h-ki-kj}), one  obtains the formulae for  the derivatives $\frac{\partial v_i}{\partial u_j},$ $1\leq i,j\leq 3$. In a similar way, using  $(i)$, $(iii)$ and $(v)$, together with  (\ref{rel.h-ki-kj}), one finds the derivatives $\frac{\partial \alpha_i}{\partial u_j},$ $1\leq i\neq j\leq 3$.  In order to compute $\frac{\partial \alpha_i}{\partial u_i},$ $1\leq i\leq 3$, we note that equation  $(ii)$, together with (\ref{rel.h-ki-kj}) and the remaining equations in (\ref{hol.flat.f.esp.}), determines the   system of linear equations 
\begin{equation*}
MP=-B
\end{equation*}
in the variables  $\frac{\partial \alpha_i}{\partial u_i},$ $1\leq i\leq 3$, where
\begin{displaymath}
M= \left( \begin{array}{ccc}
 9v_1^2v_2^4v_3^2 & \frac{9v_1^8v_2^2}{v_3^2} & 0\vspace{.17cm}\\
 \frac{9v_1^2v_3^8}{v_2^2} & 0 & 9v_1^4v_2^2v_3^2 \vspace{.17cm}\\
 0 &  9v_1^2v_2^2v_3^4 & \frac{9v_2^8v_3^2}{v_1^2}
\end{array}\right)\!\!, \quad P= \left( \begin{array}{c}
\frac{\partial \alpha_1}{\partial u_1} \vspace{.17cm}\\
\frac{\partial \alpha_2}{\partial u_2} \vspace{.17cm}\\
\frac{\partial \alpha_3}{\partial u_3}
\end{array}\right)
\end{displaymath}
and
\begin{displaymath}
B= \left( \begin{array}{r}
- 9v_1^2v_3^4(v_2^2+v_3^2)\alpha_1^2 + \frac{9v_1^8v_2^2}{v_3^6}(4v_2^2+v_3^2)(v_1^2-v_3^2)\alpha_2^2 + 9v_2^8\alpha_3^2 \\- v_1^2v_2^2(2v_3^4-v_1^2v_2^2) + \ 9v_1^4v_2^4v_3^2 c -  v_1^3v_2^3v_3(v_1^2+v_2^2-v_1v_2v_3H)H. \vspace{0.32cm}\\
 - \frac{9v_1^2v_3^8}{v_2^6}(4v_1^2+v_2^2)(v_2^2+v_3^2)\alpha_1^2 + 9v_1^8\alpha_2^2 - 9v_2^4v_3^2(v_1^2+v_2^2)\alpha_3^2\\ - v_1^2v_3^2(2v_2^4+v_1^2v_3^2) + \ 9v_1^4v_2^2v_3^4 c +  v_1^3v_2v_3^3(v_1^2-v_3^2+v_1v_2v_3H)H. \vspace{0.32cm}\\
 9v_3^8\alpha_1^2 - 9v_1^4v_2^2(v_1^2-v_3^2)\alpha_2^2 + \frac{9v_2^8v_3^2}{v_1^6}(4v_3^2-v_1^2)(v_1^2+v_2^2)\alpha_3^2 \\ - v_2^2v_3^2(2v_1^4-v_2^2v_3^2) + \ 9v_1^2v_2^4v_3^4 c +  v_1v_2^3v_3^3(v_2^2+v_3^2+v_1v_2v_3H)H.
\end{array}\right)\!\!.
\end{displaymath}
One can check that  
such system has a unique solution given by (\ref{eq:e1}), (\ref{eq:e2}) and (\ref{eq:e3}).

Finally, computing the mixed derivatives  $\frac{\partial^2 \alpha_i}{\partial u_j\partial u_k} = \frac{\partial^2 \alpha_i}{\partial u_k\partial u_j},$ $1\leq i,k,j\leq 3$, from   (\ref{hol.flat.f.esp.}) we obtain 
$$0=\frac{\partial^2 \alpha_1}{\partial u_2\partial u_1} - \frac{\partial^2 \alpha_1}{\partial u_1\partial u_2} = \Big(\frac{30v_1}{v_2v_3^9} F  - \frac{4v_1^4v_2^2}{v_3^6}(v_1^2-v_3^2)c + \frac{v_1^3v_2}{18v_3^7} m_2 H \Big)\alpha_2, 
$$
$$
0=\frac{\partial^2 \alpha_2}{\partial u_3\partial u_2} - \frac{\partial^2 \alpha_2}{\partial u_2\partial u_3} = \Big( \frac{30v_2}{v_1^9v_3}F  + \frac{4v_2^4v_3^2}{v_1^6}(v_1^2+v_2^2)c + \frac{v_3v_2^3}{18v_1^7} m_3 H \Big)\alpha_3 
$$
and
$$
 0=\frac{\partial^2 \alpha_3}{\partial u_1\partial u_3} - \frac{\partial^2 \alpha_3}{\partial u_3\partial u_1} = \Big( \frac{30v_3}{v_1v_2^9} F  - \frac{4v_1^2v_3^4}{v_2^6}(v_2^2+v_3^2)c + \frac{v_1v_3^3}{18v_2^7}m_1 H \Big)\alpha_1.\qed
 $$

 In the  lemmata that follows we assume  the hypotheses of   Proposition \ref{flat.med.reduz.result.} to be satisfied and use the notations therein.

\begin{lemma} \label{le:v1equalv3} If $v_1=v_3$ everywhere then $H=0$.
\end{lemma}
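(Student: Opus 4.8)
The plan is to substitute the constraint $v_1=v_3$ into the overdetermined system of Proposition \ref{flat.med.reduz.result.}. First note that $v_1=v_3$ together with the first equation in (\ref{holo.flat.med.2}) forces $v_2^2=2v_1^2$, so, writing $v:=v_1$, all three functions are $v_1=v_3=v$ and $v_2=\sqrt2\,v$. The initial step is to show that $\alpha_1=\alpha_3=0$: since $v_1-v_3\equiv 0$, all of its partial derivatives vanish, so comparing the formulas for $\partial v_1/\partial u_1$ and $\partial v_3/\partial u_1$ in (\ref{eq:e0}) after substituting $v_2^2=2v^2$ gives $\tfrac{7}{4}v\alpha_1=\tfrac14 v\alpha_1$, hence $\alpha_1=0$; likewise, comparing $\partial v_1/\partial u_3$ and $\partial v_3/\partial u_3$ in (\ref{eq:e0b}) gives $v\alpha_3=7v\alpha_3$, hence $\alpha_3=0$. (The analogous comparison with (\ref{eq:e0a}) produces nothing, since there both coefficients equal $v\alpha_2$.)

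The second step uses the first of the algebraic relations (\ref{equ.alg.f.e.}). With $\alpha_1=\alpha_3=0$ and $v_1=v_3$, the $c$-term drops out because of the factor $v_1^2-v_3^2$; moreover $F=0$, because the $\alpha_2^2$-term in $F$ also carries the factor $v_1^2-v_3^2$ and the purely polynomial part $2v_1^2v_2^4-2v_1^2v_3^4-2v_2^2v_3^4-v_1^2v_2^2v_3^2$ vanishes once $v_2^2=2v^2$; finally $m_2=(v_1^2-4v_3^2)(4v_1^2-v_3^2)=-9v^4$. Thus the relation collapses to $-\tfrac{\sqrt2}{2}\,vH\alpha_2=0$, i.e. $H\alpha_2=0$. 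Since $H$ is constant, either $H=0$ — and we are done — or $\alpha_2\equiv 0$.

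It remains to rule out the case $\alpha_1=\alpha_2=\alpha_3\equiv 0$ when $H\neq 0$. In that case (\ref{eq:e0}), (\ref{eq:e0a}) and (\ref{eq:e0b}) show that all $v_i$ are constant, so $v$, $c$ and $H$ are constants, and hence the right-hand sides of (\ref{eq:e1}) and (\ref{eq:e3}) must vanish. Substituting $v_1=v_3=v$, $v_2^2=2v^2$ and $\alpha_i=0$ into (\ref{eq:e1}) and into four times (\ref{eq:e3}) yields
$$1-2v^2c+\tfrac{\sqrt2}{6}vH-\tfrac{2}{9}v^2H^2=0\qquad\text{and}\qquad 1-2v^2c-\tfrac{\sqrt2}{6}vH-\tfrac{2}{9}v^2H^2=0;$$
subtracting gives $\tfrac{\sqrt2}{3}vH=0$, hence $H=0$, contradicting $H\neq 0$. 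Therefore $H=0$ in all cases.

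The main obstacle here is not conceptual but bookkeeping: the right-hand sides of (\ref{eq:e1}), (\ref{eq:e3}) and the expressions $F$ and $m_2$ in (\ref{equ.alg.f.e.}) are long, and one has to perform the substitutions $v_1=v_3$, $v_2^2=2v_1^2$ carefully and recognize the several algebraic identities — the vanishing of $F$ and of the polynomial factor above, and the collapse of $m_2$ to $-9v^4$ — that make the argument close. The one genuinely structural observation is that $\alpha_1$ and $\alpha_3$ are forced to vanish merely by the asymmetry between the evolution equations for $v_1$ and $v_3$ along $u_1$ and $u_3$.
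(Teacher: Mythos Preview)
Your proof is correct. The first step (forcing $\alpha_1=\alpha_3=0$ from the evolution equations for $v_1$ and $v_3$) is identical to the paper's. After that, however, the paper proceeds more directly than you do: instead of invoking the algebraic relation (\ref{equ.alg.f.e.}) to split into the cases $H=0$ or $\alpha_2\equiv 0$, it substitutes $v_1=v_3$, $v_2=\sqrt2\,v_1$, $\alpha_1=\alpha_3=0$ straight into (\ref{eq:e1}) and (\ref{eq:e3}) while keeping $\alpha_2$ free. The point is that the $\alpha_2^2$-coefficients in those two equations, after the substitution and a common rescaling, are \emph{equal} (both become $18\alpha_2^2$), so subtracting kills $\alpha_2$ along with the $c$- and $H^2$-terms and yields $6\sqrt2\,v_1H=0$ immediately. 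Your route via (\ref{equ.alg.f.e.}) is a valid detour --- you end up using (\ref{eq:e1}) and (\ref{eq:e3}) anyway in the residual case --- but the extra case split and the verification that $F=0$ are unnecessary: the $\alpha_2^2$-terms were going to cancel regardless.
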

\proof By the assumption and the first equation in (\ref{holo.flat.med.2}) we have $v_2=\sqrt{2}v_1$. We obtain from any two of the  equations in (\ref{eq:e0})  that $\alpha_1=0$, whereas any two of the  equations in (\ref{eq:e0b}) imply that $\alpha_3=0$.
Then (\ref{eq:e1}) and (\ref{eq:e3}) give
$$
\begin{array}{l}
18 \alpha_2^2 + 4 v_1^2 H^2 - 3 \sqrt{2}v_1 H + 36 v_1^2 c -18=0, \vspace{0.17cm} \\
18 \alpha_2^2 + 4 v_1^2 H^2 + 3 \sqrt{2}v_1 H + 36 v_1^2 c -18=0,
\end{array}
$$
which imply that $H=0$. \qed

\begin{lemma} \label{le:alphaiszero} The functions $\alpha_1, \alpha_2, \alpha_3$ can not vanish symultaneously on any open subset of $M^3$. 
\end{lemma}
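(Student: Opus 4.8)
The plan is to argue by contradiction. Assume $\alpha_1\equiv\alpha_2\equiv\alpha_3\equiv 0$ on some nonempty connected open subset $U\subset M^3$. The first observation, read directly off (\ref{eq:e0}), (\ref{eq:e0a}) and (\ref{eq:e0b}), is that then every partial derivative $\partial v_i/\partial u_j$ vanishes on $U$, so that $v_1,v_2,v_3$ are positive constants there.

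Next I would exploit the three ``diagonal'' equations. With $\alpha_1=\alpha_2=\alpha_3=0$ and $v_1,v_2,v_3$ constant, the left-hand sides of (\ref{eq:e1}), (\ref{eq:e2}) and (\ref{eq:e3}) vanish, so each of these PDE's degenerates into a polynomial identity among the constants $v_1,v_2,v_3,H$ and $c$, to be used together with the constraint $v_2^2=v_1^2+v_3^2$. After clearing denominators (multiplying the relation coming from (\ref{eq:e1}) by $18v_3^4$, the one from (\ref{eq:e2}) by $18v_1^4$, and the one from (\ref{eq:e3}) by $18v_2^4$) I would record the three identities as $(A)$, $(B)$, $(C)$, each of the form $P_i(v)\pm 18v_1^2v_2^2v_3^2c\pm v_1v_2v_3\,Q_i(v)\,H-2v_1^2v_2^2v_3^2H^2=0$ with $P_i,Q_i$ polynomials in $v_1,v_2,v_3$.

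The heart of the proof is then elimination, substituting $v_2^2=v_1^2+v_3^2$ at each step. The antisymmetric combination $(A)-(C)$ loses its $c$- and $H^2$-terms and, after using $2v_2^2+v_1^2+v_3^2=3v_2^2$ and $v_1^4-v_3^4=(v_1^2-v_3^2)v_2^2$ and dividing by $3v_2^2>0$, collapses to the single linear relation $v_1v_2v_3H=2(v_3^2-v_1^2)$. The trace $(A)+(B)+(C)$ simplifies, using the constraint, to $4(v_1^2-v_3^2)^2-18v_1^2v_2^2v_3^2c+2v_1v_2v_3(v_3^2-v_1^2)H-2(v_1v_2v_3H)^2=0$; substituting the value of $v_1v_2v_3H$ just found makes every $(v_1^2-v_3^2)^2$ term cancel, leaving $18v_1^2v_2^2v_3^2c=0$, hence $c=0$. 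Finally $(A)+(C)$, now with $c=0$, reads $14v_2^4-20v_1^2v_3^2+v_1v_2v_3(v_3^2-v_1^2)H-4(v_1v_2v_3H)^2=0$; substituting once more for $v_1v_2v_3H$ and using $v_2^4-(v_3^2-v_1^2)^2=4v_1^2v_3^2$ reduces this to $36v_1^2v_3^2=0$. Since $v_1,v_3>0$, this is the desired contradiction. (In particular the algebraic relations (\ref{equ.alg.f.e.}) are not needed here.)

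I expect the only real obstacle to be organizational: the three degenerate equations are bulky, so one must spot the economical combinations --- essentially the symmetric sum and one antisymmetric difference --- and be disciplined about eliminating $v_2$ via $v_2^2=v_1^2+v_3^2$ before each simplification. Beyond this bookkeeping, no sign subtlety or genuinely hard step seems to intervene.
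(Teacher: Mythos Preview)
Your overall strategy --- reduce (\ref{eq:e1}), (\ref{eq:e2}), (\ref{eq:e3}) with $\alpha_i\equiv 0$ to the three algebraic identities $(A),(B),(C)$ and then extract a contradiction by taking suitable linear combinations --- is exactly the paper's approach. Your computation of $(A)-(C)$ is correct and gives $v_1v_2v_3H=2(v_3^2-v_1^2)$, which is the paper's second relation in (\ref{cond.a1=a2=a3=0.3.1}).

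However, your treatment of $(A)+(B)+(C)$ contains an error. Summing the three $H$-linear pieces gives
\[
v_1v_2v_3\big[(v_2^2+v_3^2)+(v_1^2-v_3^2)-(v_1^2+v_2^2)\big]H=0,
\]
so the linear-in-$H$ term vanishes identically, and the constant term simplifies (using $v_2^2=v_1^2+v_3^2$) to $24(v_1^4+v_1^2v_3^2+v_3^4)$, not $4(v_1^2-v_3^2)^2$. The correct trace, divided by~$6$, reads
\[
4(v_1^4+v_1^2v_3^2+v_3^4)-9v_1^2v_2^2v_3^2c-(v_1v_2v_3H)^2=0,
\]
and substituting $v_1v_2v_3H=2(v_3^2-v_1^2)$ yields $12v_1^2v_3^2=9v_1^2v_2^2v_3^2c$, i.e.\ $c=4/(3v_2^2)$, \emph{not} $c=0$. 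So your derivation of $c=0$ fails as written. Your route is salvageable --- plugging this value of $c$ into the correct $(A)+(C)$ still produces $-12v_1^2v_3^2=0$ --- but the argument as you wrote it has a genuine gap.

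The paper takes the shorter path you missed: instead of the symmetric combinations, it computes the second antisymmetric difference $(A)-(B)$, which (after using $v_2^2-v_1^2=v_3^2$) gives $v_1v_2v_3H=2(v_1^2+v_2^2)$. Since $v_1^2+v_2^2>0$ while $v_3^2-v_1^2$ can never equal $v_1^2+v_2^2$ (equality would force $3v_1^2=0$), this contradicts your $(A)-(C)$ relation immediately, with no need to involve $c$, the trace, or the $(A)+(C)$ combination. Also note that your preliminary observation that the $v_i$ are constant on $U$ is correct but unnecessary: the identities $(A),(B),(C)$ hold pointwise, so the contradiction is already pointwise.
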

\proof  If $\alpha_1, \alpha_2, \alpha_3$ all vanish on the open subset $U\subset M^3$, then (\ref{eq:e1}), (\ref{eq:e2}) and (\ref{eq:e3}) become 
\begin{equation}\label{cond.a1=a2=a3=0.1}
\begin{array}{l}
2(5v_1^4+2v_2^2v_3^2) - 18v_1^2v_2^2v_3^2c +  v_1v_2v_3(v_2^2+v_3^2-2v_1v_2v_3H)H=0,
\end{array}
\end{equation}
\begin{equation}\label{cond.a1=a2=a3=0.2}
\begin{array}{l}
2(5v_2^4-2v_1^2v_3^2) - 18v_1^2v_2^2v_3^2c + v_1v_2v_3(v_1^2-v_3^2-2v_1v_2v_3H)H=0,
\end{array}
\end{equation}
\begin{equation}\label{cond.a1=a2=a3=0.3}
\begin{array}{l}
2(5v_3^4+2v_1^2v_2^2) -  18v_1^2v_2^2v_3^2c - v_1v_2v_3(v_1^2+v_2^2+2v_1v_2v_3H)H=0. 
\end{array}
\end{equation}
Comparying (\ref{cond.a1=a2=a3=0.1}) with  (\ref{cond.a1=a2=a3=0.2}) and (\ref{cond.a1=a2=a3=0.3}) yields, respectively, 
\begin{equation}\label{cond.a1=a2=a3=0.3.1}
\begin{array}{ll}
\displaystyle{H = \frac{2(v_1^2+v_2^2)}{v_1v_2v_3} \qquad {\text{and}} \qquad H=-\frac{2(v_1^2-v_3^2)}{v_1v_2v_3}},
\end{array}
\end{equation}
which is a contradiction. \qed 

\begin{lemma} \label{le:alpha1alpha3} There does not exist any open subset of $M^3$ where  $v_1-v_3$ is nowhere vanishing and  $\alpha_1=0=\alpha_3$. 
\end{lemma}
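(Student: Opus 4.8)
The plan is to argue by contradiction. Suppose there is an open subset $U\subset M^3$ on which $v_1-v_3$ is nowhere vanishing and $\alpha_1=\alpha_3=0$. By Lemma~\ref{le:alphaiszero} we may shrink $U$ to a product neighborhood on which, in addition, $\alpha_2$ is nowhere zero. First I would read off the consequences of $\alpha_1=\alpha_3=0$: equations (\ref{eq:e0}) and (\ref{eq:e0b}) then force $v_1,v_2,v_3$ to be independent of $u_1$ and $u_3$, and (\ref{eq:e1a}) and (\ref{eq:e2b}) do the same for $\alpha_2$, so that $v_1,v_2,v_3,\alpha_2$ are functions of $u_2$ alone; since $\alpha_1\equiv 0$ and $\alpha_3\equiv 0$, the right-hand sides of (\ref{eq:e1}) and (\ref{eq:e3}) must vanish identically, which---after substituting $v_2^2=v_1^2+v_3^2$---gives two algebraic relations, say $E_1$ and $E_3$, among $v_1,v_3,\alpha_2,c,H$; and since $\alpha_2\neq 0$, the first of the algebraic identities in (\ref{equ.alg.f.e.}) (with $\alpha_1=\alpha_3=0$ substituted in $F$) yields a third relation $E_4$, the remaining two identities in (\ref{equ.alg.f.e.}) being trivially satisfied.

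Next I would integrate (\ref{eq:e0a}): as $v_3'=v_3\alpha_2\neq 0$, the function $v_3$ serves as a local parameter, and dividing the equations for $\partial v_1/\partial u_2$ and $\partial v_3/\partial u_2$ gives $dv_1/dv_3=v_1^5/v_3^5$, hence $v_1^{-4}-v_3^{-4}\equiv k$ for a constant $k$, with $k\neq 0$ because $v_1\neq v_3$; in particular $v_1^4=v_3^4/(1+kv_3^4)$. The remaining step is purely algebraic. Each of $E_1,E_3,E_4$ is affine in $\alpha_2^2$, with all of its coefficients rational in $v_1^2,v_3^2$ (and in the constants $c$ and $H$), except for a single term---linear in $H$---proportional to $v_1v_2v_3$, a quantity whose square $v_1^2v_3^2(v_1^2+v_3^2)$ is again rational in $v_1^2,v_3^2$. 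Eliminating $\alpha_2^2$, and then the factor $v_1v_2v_3$, among $E_1,E_3,E_4$ therefore produces polynomial relations in $v_1^2,v_3^2$ (with coefficients depending on $c,H$) valid along the first-integral curve $v_1^{-4}-v_3^{-4}=k$. Substituting $v_1^4=v_3^4/(1+kv_3^4)$ and clearing denominators and radicals turns these into polynomial identities in the single variable $v_3$; since $v_3$ sweeps out an interval on $U$ (again because $v_3'=v_3\alpha_2\neq 0$), every coefficient must vanish, and comparing them I would derive $k=0$---a contradiction.

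I expect the genuine obstacle to be this last, computational, step: carrying out the eliminations explicitly and checking that the ensuing coefficient comparison really forces $k=0$, rather than merely restricting $c,H,k$. It seems cleanest to split the verification into the cases $H=0$ and $H\neq 0$ and, when $c\neq 0$, also according to the sign of $c$, in order to rule out accidental cancellations; and in the borderline situation where the relations coming from $E_1$ and $E_3$ alone happen to be consistent with the first integral for the given $k$, one uses the extra relation $E_4$---and, if still needed, the one obtained by differentiating $E_1$ along $u_2$ with the help of (\ref{eq:e2}) and (\ref{eq:e0a})---to finish.
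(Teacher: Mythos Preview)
Your setup agrees with the paper's: on $U$ one reduces to functions of $u_2$ alone, and the vanishing of the right-hand sides of (\ref{eq:e1}) and (\ref{eq:e3}) gives two algebraic relations $E_1,E_3$ among $v_1,v_3,\alpha_2^2,c,H$. From there, however, the two arguments diverge.

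The paper does \emph{not} introduce the first integral $v_1^{-4}-v_3^{-4}=k$, and it does not use the compatibility relation from (\ref{equ.alg.f.e.}). Instead, from $E_1$ and $E_3$ it solves for $\alpha_2^2$ explicitly (equation (\ref{col.ex.f.e.5})), then computes $\partial\alpha_2/\partial u_2$ in two ways: once by substituting (\ref{col.ex.f.e.5}) into the evolution equation (\ref{eq:e2}), and once by differentiating (\ref{col.ex.f.e.5}) directly via (\ref{eq:e0a}). Equating the two and using $\alpha_2\neq 0$, $v_1\neq v_3$, it obtains a single relation (\ref{col.ex.f.e.8}) in $v_1,v_3,c,H$ alone. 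Differentiating (\ref{col.ex.f.e.8}) along $u_2$ once (using that $H$ is constant) yields an explicit formula (\ref{col.ex.f.e.10}) expressing $H$ as a nonconstant rational function of $v_1,v_3$; differentiating that once more gives the contradiction.

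Your route---trading repeated differentiation for the first integral $v_1^{-4}-v_3^{-4}=k$ and then arguing by polynomial identities in the single variable $v_3$---is a reasonable alternative, and the first integral is a nice observation. But the decisive elimination is left undone, and in practice it is heavier than the paper's scheme: the substitution $v_1^4=v_3^4/(1+kv_3^4)$ brings in $\sqrt{1+kv_3^4}$ through $v_1^2$ and $v_1v_2v_3$, so ``clearing radicals'' means squaring and a blow-up in degree. By contrast, the paper keeps differentiating along $u_2$ via (\ref{eq:e0a}), so that the factor $v_2$ is handled by its own ODE rather than by substitution; each step stays affine in the remaining unknown. Note also that your $E_4$ is, modulo $E_1$ and the $v$-evolution (\ref{eq:e0a}), essentially the $\alpha_2$-evolution (\ref{eq:e2}); this is exactly the extra differential input the paper uses, so in content your three relations and the paper's are equivalent---the difference is purely in how the elimination is organised.
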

\proof Assume that  $\alpha_1 = 0=\alpha_3$ and that  $v_1-v_3$ does not  vanish on the open subset $U \subset M^3$.
By Lemma \ref{le:alphaiszero},  $\alpha_2$ must be nonvanishing on an open dense subset $V\subset U$.  Then equations (\ref{eq:e0}), (\ref{eq:e0a}), (\ref{eq:e0b}), (\ref{eq:e1a}), (\ref{eq:e2a}) and (\ref{eq:e2b}) reduce to the following  on $V$:
\begin{equation*}
\begin{array}{l}
\displaystyle{\frac{\partial v_i}{\partial u_1}=\frac{\partial v_i}{\partial u_3}=\frac{\partial \alpha_i}{\partial u_j}=0}, \ \ i,j=1,2,3,  \ \  \ \ i\neq j, \hspace{2.08cm}
\end{array}
\end{equation*}
\begin{equation}\label{col.ex.f.e.1}
\begin{array}{l}
\displaystyle{\frac{\partial v_1}{\partial u_2}=\frac{v_1^5}{v_3^4}\alpha_2, \quad 
\frac{\partial v_2}{\partial u_2}=\frac{v_2}{v_3^4}(v_1^4-v_1^2v_3^2+v_3^4) \alpha_2, \quad
\frac{\partial v_3}{\partial u_2}=v_3\alpha_2},
\end{array}
\end{equation}
and, since $\alpha_1=\alpha_3=0,$ equations (\ref{eq:e1}), (\ref{eq:e2}) and (\ref{eq:e3})  become, respectively, 
\begin{equation}\label{col.ex.f.e.2}
\begin{array}{l}
\displaystyle{\frac{v_1^6}{v_3^8}(3v_1^2-2v_3^2)\alpha_2^2 + \frac{1}{9v_3^4}(5v_1^4+2v_2^2v_3^2) -  \frac{v_1^2v_2^2}{v_3^2}c}  \vspace{1ex}\\ \hspace*{20ex}\displaystyle{+\frac{v_1v_2}{18v_3^3}(v_2^2+v_3^2-2v_1v_2v_3H)H =0},
\end{array}
\end{equation}
\begin{equation}\label{col.ex.f.e.3}
\begin{array}{l}
\displaystyle{\frac{\partial \alpha_2}{\partial u_2} = -\frac{1}{v_3^4}(v_1^4 - 3v_3^4)\alpha_2^2 -\frac{1}{9v_1^4}(5v_2^4-2v_1^2v_3^2)  +  \frac{v_2^2v_3^2}{v_1^2}c} \vspace{1ex}\\ \hspace*{20ex}\displaystyle{- \frac{v_2v_3}{18v_1^3}(v_1^2-v_3^2-2v_1v_2v_3H)H},
\end{array}
\end{equation}
\begin{equation}\label{col.ex.f.e.4}
\begin{array}{l}
\displaystyle{\frac{v_1^4}{v_2^4v_3^2}(2v_1^2-3v_3^2)\alpha_2^2 + \frac{1}{9v_2^4}(5v_3^4+2v_1^2v_2^2) -  \frac{v_1^2v_3^2}{v_2^2}c} \vspace{1ex}\\ \hspace*{20ex} \displaystyle{-\frac{v_1v_3}{18v_2^3}(v_1^2+v_2^2+2v_1v_2v_3H)H=0}. 
\end{array}
\end{equation}
Multiplying (\ref{col.ex.f.e.2}) and   (\ref{col.ex.f.e.4}) by  $2v_3^8$ and $3v_1^2v_2^4v_3^2$, respectively,  and  subtracting one
from the  other, yield
\begin{equation}
\begin{array}{lcr}\label{col.ex.f.e.5}
\displaystyle{\alpha_2^2 = \frac{1}{90 v_1^6}\big[-2 v_1^2 v_2^2 v_3^2 (3 v_1^2 + 2 v_3^2)H^2 -  v_1 v_2 v_3  (6 v_1^4 + v_1^2 v_3^2 - 4 v_3^4)H} \vspace{0.15cm} \\  \hspace*{5ex} \displaystyle{- \ 18  v_1^2 v_2^2 v_3^2 (3 v_1^2 + 2 v_3^2)c + 2 (6 v_1^6 + 16 v_1^4 v_3^2 + 19 v_1^2 v_3^4 + 4 v_3^6)\big]}.
  \end{array}
\end{equation} 
On one hand, substituting (\ref{col.ex.f.e.5}) in  (\ref{col.ex.f.e.3}) we obtain
\begin{equation}\label{col.ex.f.e.6}
\begin{array}{l}
\displaystyle{ \frac{\partial \alpha_2}{\partial u_2} = \frac{1}{90 v_1^6 v_3^4}\big[2 v_1^2 v_2^2 v_3^2 (3 v_1^6 + 2 v_1^4 v_3^2 - 4 v_1^2 v_3^4 - 6 v_3^6)H^2} \vspace{0.1cm} \\\hspace*{9ex} \displaystyle{+ \  v_1 v_2 v_3 (6 v_1^8 + v_1^6 v_3^2 - 27 v_1^4 v_3^4 + 2 v_1^2 v_3^6 + 12 v_3^8)H} \vspace{0.1cm} \\\hspace*{9ex} \displaystyle{+ \ 18 v_1^2 v_2^2 v_3^2 (3 v_1^6 + 2 v_1^4 v_3^2 - 4 v_1^2 v_3^4 - 6 v_3^6)c} \vspace{0.1cm} \\\hspace*{9ex} \displaystyle{- \ 4 (v_1^4 - v_3^4) (3 v_1^6 + 8 v_1^4 v_3^2 + 16 v_1^2 v_3^4 + 6 v_3^6)  \big]}.
\end{array}
\end{equation}
On the other hand, differentiating  (\ref{col.ex.f.e.5}) with respect to  $u_2$ and using  (\ref{col.ex.f.e.1}) gives
\begin{equation}\label{col.ex.f.e.7}
\begin{array}{l}
\displaystyle{\alpha_2\frac{\partial \alpha_2}{\partial u_2} = -\frac{\alpha_2}{180 v_1^6 v_2 v_3^3} \big[v_2^2(-4 v_1^2 v_2 v_3^3 (5 v_1^4 - 4 v_1^2 v_3^2 - 6 v_3^4)H^2} \vspace{0.1cm} \\\hspace*{9ex}  \displaystyle{- \ v_1 v_3^2 (8 v_1^6 - 27 v_1^4 v_3^2 - 8 v_1^2 v_3^4 + 24 v_3^6)H} \vspace{1ex}\\\hspace*{9ex}\displaystyle{- \ 36 v_1^2 v_2 v_3^3 (5 v_1^4 - 4 v_1^2 v_3^2 - 6 v_3^4)c}  \vspace{1ex}\\\hspace*{9ex}\ \displaystyle{+8 v_2 v_3  (v_1^2 - v_3^2) (v_2^2 + v_3^2) (8 v_1^2 + 3 v_3^2)\big]}.
\end{array}
\end{equation}
Using that  $\alpha_2\neq 0$ and $v_1-v_3\neq 0$ on  $V$, we obtain from   (\ref{col.ex.f.e.6}) and (\ref{col.ex.f.e.7}) that
\begin{equation}\label{col.ex.f.e.8}
\begin{array}{l}
\displaystyle{ H^2 + \frac{4 v_1^6 - 2 v_1^4 v_3^2 - 9 v_1^2 v_3^4 + 4 v_3^6}{4 v_1^3 v_2 v_3 (v_1^2 - v_3^2)}H - \frac{2 v_2^2 (v_1^2 - v_3^2)}{v_1^4 v_3^2} + 9 c=0.}
\end{array}
\end{equation}
Differentiating (\ref{col.ex.f.e.8}) with respect to $u_2,$ and using  (\ref{col.ex.f.e.1}) we obtain
\begin{equation}\label{col.ex.f.e.9}
\begin{array}{l}
v_1 v_3 (22 v_1^6 - 11 v_1^4 v_3^2 - 4 v_1^2 v_3^4 + 8 v_3^6)H = -16  v_2^3 (v_1^2 - v_3^2)^2.
\end{array}
\end{equation}
Since $v_1-v_3\neq 0,$ we must have  $(22 v_1^6 - 11 v_1^4 v_3^2 - 4 v_1^2 v_3^4 + 8 v_3^6)\neq 0$ on $V.$ Therefore,  (\ref{col.ex.f.e.9}) implies   that
\begin{equation}\label{col.ex.f.e.10}
\begin{array}{l}
\displaystyle{H = -\frac{16  v_2^3 (v_1^2 - v_3^2)^2}{v_1 v_3 (22 v_1^6 - 11 v_1^4 v_3^2 - 4 v_1^2 v_3^4 + 8 v_3^6)}}.
\end{array}
\end{equation}
Finally, differentiating  (\ref{col.ex.f.e.10}) with respect to  $u_2$ and  using  (\ref{col.ex.f.e.1}) we obtain
\begin{equation}\label{col.ex.f.e.11}
\begin{array}{l}
\displaystyle{0 = -\frac{1680  v_1^5v_2^3 (v_1^2 - v_3^2)^2}{v_3 (22 v_1^6 - 11 v_1^4 v_3^2 - 4 v_1^2 v_3^4 + 8 v_3^6)^2}\alpha_2,}
\end{array}
\end{equation}
which is a contradiction, for the right-hand-side of  (\ref{col.ex.f.e.11}) is nonzero.\qed 

\begin{lemma} \label{le:alpha2alphaj} There does not exist any open subset of $M^3$ where  $\alpha_2=0=\alpha_j$ for some $j\in \{1, 3\}$.  
\end{lemma}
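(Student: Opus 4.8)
The plan is to adapt the argument used to prove Lemma~\ref{le:alpha1alpha3}. Suppose, for a contradiction, that $\alpha_2=0=\alpha_j$ on an open subset $U\subset M^3$ for some $j\in\{1,3\}$. I would treat the case $j=1$ in detail; the case $j=3$ is entirely analogous, with (\ref{eq:e0}), (\ref{eq:e1}) and the third relation in (\ref{equ.alg.f.e.}) playing the roles played below by (\ref{eq:e0b}), (\ref{eq:e3}) and the second relation in (\ref{equ.alg.f.e.}). So assume $\alpha_1=0=\alpha_2$ on $U$. By Lemma~\ref{le:alphaiszero}, $\alpha_3$ is nowhere zero on an open dense subset $V\subset U$, and everything below takes place on $V$. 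Observe first that $v_1\neq v_3$ everywhere on $V$: if $v_1=v_3$ on some open subset, then equating $\partial v_1/\partial u_3$ and $\partial v_3/\partial u_3$ from (\ref{eq:e0b}) and using $v_2^2=v_1^2+v_3^2$ gives $v_2^2(v_1^2+v_2^2)\alpha_3=0$, hence $\alpha_3=0$, a contradiction.

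First I would record how the system (\ref{eq:e0})--(\ref{eq:e3}) collapses on $V$. Since $\alpha_1\equiv\alpha_2\equiv 0$ there, equations (\ref{eq:e0}), (\ref{eq:e0a}), (\ref{eq:e1a}), (\ref{eq:e2a}) and (\ref{eq:e2b}) yield $\partial v_i/\partial u_1=\partial v_i/\partial u_2=0$ for $i=1,2,3$ and $\partial\alpha_3/\partial u_1=\partial\alpha_3/\partial u_2=0$, so that $v_1,v_2,v_3$ and $\alpha_3$ depend only on $u_3$, with the $u_3$-derivatives of the $v_i$ governed by (\ref{eq:e0b}). Moreover $\partial\alpha_1/\partial u_1=0=\partial\alpha_2/\partial u_2$ on $V$, so (\ref{eq:e1}) and (\ref{eq:e2}) become two algebraic identities, each linear in $\alpha_3^2$ with coefficients rational in $v_1,v_2,v_3,H,c$; combining them produces a single expression $\alpha_3^2=\Psi(v_1,v_2,v_3,H,c)$, and feeding it into the remaining equation (\ref{eq:e3}) --- now the genuine evolution equation for $\alpha_3$ --- gives an algebraic formula $\partial\alpha_3/\partial u_3=\Phi(v_1,v_2,v_3,H,c)$. (If the two relations from (\ref{eq:e1}) and (\ref{eq:e2}) happened to be proportional, one would instead use the vanishing of the bracket in the second line of (\ref{equ.alg.f.e.}), valid since $\alpha_3\neq 0$, as the second relation.)

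The contradiction should then be obtained by the same differentiation scheme as in the chain (\ref{col.ex.f.e.8})--(\ref{col.ex.f.e.11}). Differentiating $\alpha_3^2=\Psi$ with respect to $u_3$, substituting the formulae (\ref{eq:e0b}) for the $\partial v_i/\partial u_3$ --- each a multiple of $\alpha_3$ --- and using $v_2\,\partial v_2/\partial u_3=v_1\,\partial v_1/\partial u_3+v_3\,\partial v_3/\partial u_3$, the left-hand side becomes $2\alpha_3\Phi$ while the right-hand side becomes $\alpha_3$ times a rational function of $v_1,v_2,v_3,H,c$; cancelling $\alpha_3$ and eliminating $v_2$ through $v_2^2=v_1^2+v_3^2$ yields a polynomial identity $P_1(v_1,v_3,H,c)=0$ on $V$. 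Since $H,c$ are constants and $\partial v_1/\partial u_3=v_1\alpha_3$ is nowhere zero on $V$, the function $v_1$ is non-constant along $u_3$, so it is enough to produce, by differentiating $P_1=0$ along $u_3$ and again dividing by the nonzero factor $\alpha_3$, a second polynomial identity $P_2(v_1,v_3,H,c)=0$ algebraically independent of $P_1$: this confines $(v_1,v_3)$ to a finite set for the given $H,c$, a contradiction. If $P_2$ turns out to be dependent on $P_1$, I would iterate the differentiation; finitely many steps must either give an independent relation or, exactly as in (\ref{col.ex.f.e.11}), a relation of the form $0=(\text{nonvanishing rational function of }v_1,v_3)\cdot\alpha_3$, which again contradicts $\alpha_3\neq 0$.

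The main obstacle is, as in Lemma~\ref{le:alpha1alpha3}, purely computational: performing the elimination of $\alpha_3^2$ and of $v_2$, keeping the successive polynomials $P_1,P_2,\dots$ under control, and verifying at each differentiation step that the factor one divides out (a power of $\alpha_3$, or an explicit polynomial in $v_1,v_3$ of the type $22v_1^6-11v_1^4v_3^2-4v_1^2v_3^4+8v_3^6$ appearing in (\ref{col.ex.f.e.9})) is nonvanishing on a dense open subset, shrinking $V$ if necessary --- here the inequality $v_1\neq v_3$ established at the outset and the constancy of $H$ and $c$ keep the problem essentially two-dimensional. As a sanity check, note that the cone over a Clifford torus has $\alpha_2\neq 0$, so one should expect Lemma~\ref{le:alpha2alphaj} to be a clean non-existence statement admitting no exceptional configuration, in contrast with Lemmas~\ref{le:v1equalv3} and~\ref{le:alpha1alpha3}.
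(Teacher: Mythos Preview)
Your plan is correct and follows the same overall strategy as the paper: reduce (\ref{eq:e1}) and (\ref{eq:e2}) to algebraic constraints on $V$ (since $\partial\alpha_1/\partial u_1=\partial\alpha_2/\partial u_2=0$ there) and then differentiate along $u_3$ until a contradiction appears. The paper's execution is more direct, however. Instead of solving for $\alpha_3^2=\Psi$ and bringing in the evolution equation (\ref{eq:e3}) --- which is exactly the template you borrowed from the proof of Lemma~\ref{le:alpha1alpha3} --- the paper \emph{eliminates} $\alpha_3^2$ between the two constraints coming from (\ref{eq:e1}) and (\ref{eq:e2}), obtaining immediately a relation (\ref{con.ad.f.e.6}) involving only $v_1,v_2,v_3,H,c$. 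One differentiation of (\ref{con.ad.f.e.6}) along $u_3$ (dividing by $\alpha_3\neq 0$) solves for $H$ as a rational function of the $v_i$, and a second differentiation of that, using that $H$ is constant, yields the contradiction directly. So (\ref{eq:e3}) is never needed, and neither is your preliminary observation that $v_1\neq v_3$ on $V$.
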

\proof We argue for the case in which $j=1$, the other case being similar. So, assume that  $\alpha_1$ and $\alpha_2$ vanish on an open subset  $U \subset M^3$.  By Lemma \ref{le:alphaiszero},  $\alpha_3$ is nonzero on an open dense subset $V\subset U$.  Equations (\ref{eq:e1}) and (\ref{eq:e2}) can be rewritten as follows on $V$: 

\begin{equation}\label{con.ad.f.e.5}
\begin{array}{l}
\displaystyle{\frac{v_2^4}{v_1^2v_3^4}(3v_1^2+2v_2^2)\alpha_3^2  + \frac{1}{9v_3^4}(5v_1^4+2v_2^2v_3^2)  -  \frac{v_1^2v_2^2}{v_3^2}c }\vspace{1ex}\\ \hspace*{20ex}\displaystyle{+  \frac{v_1v_2}{18v_3^3}(v_2^2+v_3^2-2v_1v_2v_3H)H =0}, \vspace{0.17cm} \\

\displaystyle{- \frac{v_2^6}{v_1^8}(2v_1^2+3v_2^2)\alpha_3^2 -\frac{1}{9v_1^4}(5v_2^4-2v_1^2v_3^2)  +  \frac{v_2^2v_3^2}{v_1^2}c} \vspace{1ex}\\ \hspace*{20ex}\displaystyle{- \frac{v_2v_3}{18v_1^3}(v_1^2-v_3^2-2v_1v_2v_3H)H =0}.
\end{array}
\end{equation}
Eliminating $\alpha_3^2$ from the equations in  (\ref{con.ad.f.e.5}) yields
\begin{equation}\label{con.ad.f.e.6}
\begin{array}{ll}
\displaystyle{H^2- \frac{7 v_1^4 + 7 v_1^2 v_3^2 + 2 v_3^4}{2 v_1 v_2 v_3 (v_1^2 + v_2^2)} H - \frac{ 2 v_3^2}{v_1^2 v_2^2} + 9 c =0}.
\end{array}
\end{equation}
Differentiating (\ref{con.ad.f.e.6}) with respect to $u_3$ and using that $\alpha_3\neq 0$ we obtain
\begin{equation}\label{con.ad.f.e.7}
\displaystyle{H=\frac{8 v_3^3  (v_1^2 + v_2^2)}{v_1  v_2 (21 v_1^4 + 21 v_1^2 v_3^2 + 4 v_3^4)}}.
\end{equation}
Finally, differentiating (\ref{con.ad.f.e.7}) with respect to  $u_3$ and using the fact that  $H$ is constant we obtain 
\begin{equation*}
\begin{array}{ll}
\displaystyle{0=\frac{120 v_3^3 v_2 (v_1^2 + v_2^2) (7 v_1^4 + 7 v_1^2 v_3^2 + 2 v_3^4)}{v_1^3 (21 v_1^4 + 21 v_1^2 v_3^2 + 4 v_3^4)^2},}
\end{array}
\end{equation*}
which is a contradiction. \qed

\begin{lemma} \label{le:alphaialphajneq0} If there exist $p\in M^3$ and $1\leq i\neq j\leq 3$  such that $\alpha_i(p)\neq 0\neq \alpha_j(p)$ then $H=0=c$.  
\end{lemma}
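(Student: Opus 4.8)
The plan is to combine the algebraic relations in (\ref{equ.alg.f.e.}) with the evolution equations (\ref{eq:e0})--(\ref{eq:e0b}) for $v_1,v_2,v_3$. By continuity, $\alpha_i$ and $\alpha_j$ are nowhere zero on some open neighborhood $U$ of $p$, so on $U$ the two relations in (\ref{equ.alg.f.e.}) whose active factors are $\alpha_i$ and $\alpha_j$ force the vanishing of the corresponding bracketed expressions. There are three cases according to $\{i,j\}$; I would describe $\{i,j\}=\{1,2\}$, the others being entirely analogous. Solving the relations with factors $\alpha_1$ and $\alpha_2$ for $F$ and equating the two expressions, the term $F$ cancels, and after inserting the formulae for $m_1,m_2$ and using $v_2^2=v_1^2+v_3^2$ one is left with a single identity free of the $\alpha_k$'s:
\[
24\,v_1v_2v_3\,(H^2+9c)=21(2v_1^2+v_3^2)\,H \qquad\text{on }U
\]
(in the cases $\{2,3\}$ and $\{1,3\}$ the polynomial $21(2v_1^2+v_3^2)$ is replaced by $21(v_1^2+2v_3^2)$ and $21(v_1^2-v_3^2)$, respectively). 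Performing this elimination and simplification is lengthy though purely mechanical, and is the step I expect to be most laborious.

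Next I would check that the map $(v_1,v_3)\colon U\to\R^2$ is a submersion at $p$. By (\ref{eq:e0})--(\ref{eq:e0b}), its three Jacobian columns at $p$ are strictly positive multiples of $\alpha_1(p)w_1$, $\alpha_2(p)w_2$ and $\alpha_3(p)w_3$, where $w_1=(v_1(v_2^4+v_2^2v_3^2+v_3^4),\,v_3^5)$, $w_2=(v_1^5,\,v_3^5)$ and $w_3=(v_1^5,\,v_3(v_1^4+v_1^2v_2^2+v_2^4))$. A short computation using $v_2^2=v_1^2+v_3^2$ gives $w_1\wedge w_2=3v_1v_2^2v_3^7$, $w_2\wedge w_3=3v_1^7v_2^2v_3$ and $w_1\wedge w_3=v_1v_3\big[(v_2^4+v_2^2v_3^2+v_3^4)(v_1^4+v_1^2v_2^2+v_2^4)-v_1^4v_3^4\big]$, all strictly positive since $v_1,v_2,v_3>0$. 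As $\alpha_i(p),\alpha_j(p)\neq0$, two of the columns are linearly independent, so $(v_1,v_3)$ is a submersion at $p$; hence its image contains an open set $W\subset\{(x,y):x,y>0\}$, on which the identity above must hold.

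Finally, writing $v_2=(v_1^2+v_3^2)^{1/2}$ and squaring converts that identity into the polynomial identity
\[
576\,v_1^2v_3^2(v_1^2+v_3^2)(H^2+9c)^2=L(v_1,v_3)^2\,H^2
\]
on $W$, hence on all of $\R^2$, where $L$ is the relevant nonzero homogeneous quadratic. Its left-hand side is homogeneous of degree $6$ and is nonzero unless $H^2+9c=0$, while the right-hand side has degree at most $4$; comparing the two forces $H^2+9c=0$, whence $L^2H^2=0$ and therefore $H=0$, and then $c=0$, as claimed.
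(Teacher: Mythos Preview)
Your argument is correct. The first step---eliminating $F$ between the two relations in (\ref{equ.alg.f.e.}) whose factors $\alpha_i,\alpha_j$ are nonzero---is exactly what the paper does, and your identity $24\,v_1v_2v_3(H^2+9c)=21(2v_1^2+v_3^2)H$ is (up to a harmless factor of $3$) the paper's equation (\ref{con.ad.f.e.3}), rewritten using $v_1^2+v_2^2=2v_1^2+v_3^2$.

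Where you diverge is in the final step. The paper simply differentiates (\ref{con.ad.f.e.3}) with respect to $u_1$; since $H$ and $c$ are constants and the derivatives of the $v_k$ are given by (\ref{eq:e0}), this collapses to $\dfrac{21v_3^3}{8v_1v_2^3}\,H\,\alpha_1=0$, whence $H=0$ (as $\alpha_1\neq0$) and then $c=0$ from (\ref{con.ad.f.e.3}). Your route---showing that $(v_1,v_3)$ is a submersion at $p$, so the squared identity becomes a genuine polynomial identity in two variables, and then comparing homogeneous degrees $6$ versus $4$---is a valid alternative. It is a bit longer (the submersion check and the squaring) but has the advantage of being a clean algebraic argument once the identity is in hand, with no further use of the evolution equations. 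The paper's differentiation is shorter and more in the spirit of the rest of the proof, where derivatives along the principal directions are repeatedly exploited.
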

\proof  We give the proof for the case in which $i=1$ and $j=2$, the remaining ones being similar. Let $U\subset M^3$ be an open neighborhood of $p$ where  $\alpha_1$ and  $\alpha_2$ are nowhere vanishing. Then  (\ref{equ.alg.f.e.}) gives
$$\frac{30v_3}{v_1v_2^9} F  - \frac{4v_1^2v_3^4}{v_2^6}(v_2^2+v_3^2)c + \frac{v_1v_3^3}{18v_2^7}m_1 H=0   $$
and 
$$ \frac{30v_1}{v_2v_3^9} F  - \frac{4v_1^4v_2^2}{v_3^6}(v_1^2-v_3^2)c + \frac{v_1^3v_2}{18v_3^7} m_2 H =0,
$$
or equivalently,
\begin{equation}\label{con.ad.f.e.2}
\left\{\begin{array}{l}
F=-\frac{1}{540}v_1^2v_2^2v_3^2[Hm_1-72cv_1v_2v_3(v_2^2+v_3^2)], \vspace{0.18cm} \\
F=-\frac{1}{540}v_1^2v_2^2v_3^2[Hm_2-72cv_1v_2v_3(v_1^2-v_3^2)].
\end{array}\right.
\end{equation}
Subtracting one of the equations in  (\ref{con.ad.f.e.2}) from the other we obtain 
\begin{equation}\label{con.ad.f.e.3}
\begin{array}{lll}
\displaystyle{H^2-\frac{7(v_1^2 \ + \ v_2^2)}{8v_1v_2v_3}H+9c=0}.
\end{array}
\end{equation}
Differentiating  (\ref{con.ad.f.e.3}) with respect to  $u_1$ we obtain 
\begin{equation}\label{con.ad.f.e.4}
\begin{array}{lll}
\displaystyle{\frac{21v_3^3}{8v_1v_2^3}H\alpha_1=0}.
\end{array}
\end{equation}
Since  $\alpha_1\neq 0$,  equation  (\ref{con.ad.f.e.4}) implies that  $H=0$, and hence $c=0$ by (\ref{con.ad.f.e.3}). \vspace{1ex}\qed

\begin{lemma} \label{le:v1neqv3} If $v_1\neq v_3$ at some point of $M^3$ then $H=0=c$. 
\end{lemma}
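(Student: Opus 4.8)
The plan is to deduce this statement purely by combining the four preceding lemmata through a case analysis on the vanishing behaviour of $\alpha_1,\alpha_2,\alpha_3$; no new computation should be needed, since all the analytic content was already extracted from the PDE system in Proposition~\ref{flat.med.reduz.result.}.

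First I would fix a point $p\in M^3$ with $v_1(p)\neq v_3(p)$ and a connected open neighborhood $U$ of $p$ on which $v_1-v_3$ is nowhere vanishing. Next I would split into two cases according to whether some two of the functions $\alpha_i$ are simultaneously nonzero somewhere on $M^3$. If there exist a point $q\in M^3$ and indices $i\neq j$ with $\alpha_i(q)\neq 0\neq\alpha_j(q)$, then Lemma~\ref{le:alphaialphajneq0} immediately gives $H=0=c$ and we are finished. So the substance of the proof is to rule out the complementary situation, namely that at every point of $M^3$ at most one of $\alpha_1,\alpha_2,\alpha_3$ is different from zero.

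Assuming this, the open sets $U_i=\{q\in U:\alpha_i(q)\neq 0\}$, $i=1,2,3$, are pairwise disjoint, and by Lemma~\ref{le:alphaiszero} (the $\alpha_i$ cannot vanish simultaneously on any open set) their union is dense in $U$. I would then show that each $U_i$ is empty. On $U_1$ one has $\alpha_2\equiv 0\equiv\alpha_3$, so $U_1\neq\emptyset$ would contradict Lemma~\ref{le:alpha2alphaj} in the case $j=3$; on $U_3$ one has $\alpha_1\equiv 0\equiv\alpha_2$, so $U_3\neq\emptyset$ would contradict Lemma~\ref{le:alpha2alphaj} in the case $j=1$; and on $U_2\subset U$ one has $\alpha_1\equiv 0\equiv\alpha_3$ while $v_1-v_3$ is nowhere zero, so $U_2\neq\emptyset$ would contradict Lemma~\ref{le:alpha1alpha3}. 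Hence $U_1\cup U_2\cup U_3=\emptyset$, contradicting both its density in $U$ and the fact that $U\neq\emptyset$. Therefore the first case must occur, that is, $H=0=c$.

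There is essentially no analytic obstacle here; the only point requiring care is the bookkeeping — checking that Lemmata~\ref{le:alphaiszero}, \ref{le:alpha1alpha3} and \ref{le:alpha2alphaj} between them genuinely exhaust all local patterns of the zero sets of $\alpha_1,\alpha_2,\alpha_3$ on a region where $v_1\neq v_3$, and that the hypothesis ``$v_1-v_3$ nowhere vanishing'' is available exactly where Lemma~\ref{le:alpha1alpha3} requires it, namely on $U_2$, which lies inside $U$.
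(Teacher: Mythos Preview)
Your argument is correct and follows essentially the same route as the paper's proof: both reduce the statement to Lemmata~\ref{le:alphaiszero}, \ref{le:alpha1alpha3}, \ref{le:alpha2alphaj} and \ref{le:alphaialphajneq0} via a case analysis on which $\alpha_i$ vanish on an open set where $v_1\neq v_3$. The only difference is cosmetic---you phrase it as a contrapositive (assume no two $\alpha_i$ are ever simultaneously nonzero and derive a contradiction), whereas the paper argues directly (pick an open set where some $\alpha_i\neq 0$, then use the lemmata to produce a nearby point where a second $\alpha_j\neq 0$).
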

\proof  Assume that $v_1(p_0)\neq v_3(p_0)$ for some  $p_0\in M^3$, and hence that $v_1\neq v_3$  on some open neighborhood  $U\subset M^3$
of $p_0$. By Lemma \ref{le:alphaiszero}, there exist an open subset $U'\subset U$ and $i\in\{1,2,3\}$ such that  $\alpha_i(p)\neq 0$ for all $p\in U'$.  It follows from Lemma \ref{le:alpha1alpha3} and Lemma \ref{le:alpha2alphaj} that there exist $q\in U'$ and $j\in\{1,2,3\}, \ j\neq i,$ such that   $\alpha_j(q)\neq 0$. Thus there exists $q\in M^3$ such that  $\alpha_i(q)\neq 0$ and $\alpha_j(q)\neq 0, \ i\neq j,$ and the conclusion follows from Lemma \ref{le:alphaialphajneq0}. \vspace{2ex}\qed

\noindent \emph{Proof of Theorem $\ref{thm:cmc}$:} Follows immediately from  Lemma \ref{le:v1equalv3} and Lemma \ref{le:v1neqv3}.\vspace{2ex}\qed

\noindent \emph{Proof of Theorem $\ref{thm:minimalcneq0}$:}  
Given $p\in M^3$, let  $u_1, u_2, u_3$ be $U$ be local principal coordinates on an open neighborhood  $U$ of $p$  as in 
Corollary \ref{le:asspair}. It follows from Lemma \ref{le:v1neqv3}  that the associated pair $(v,V)$ satisfies $v_1=v_3$ on $U$. Thus $\lambda_2$ vanishes on $U$, and hence everywhere on $M^3$ by analyticity. The statement is now a consequence of the next proposition. \qed

\begin{proposition}\label{thm:minimalpczero} Let $f\colon M^{3} \to \Q^{4}(c)$  be a  conformally flat hypersurface with three distinct principal curvatures. If one of the principal curvatures is everywhere zero,   then either $c=0$ and $f$ is locally a cylinder over a surface $g\colon M^2(\bar c)\to \R^3$ with constant Gauss curvature $\bar c\neq 0$ or $f$ is locally   a generalized cone over a surface $g\colon M^2(\bar c)\to \Q^{3}(\tilde c)$ with constant Gauss curvature $\bar c\neq \tilde c$ in an umbilical hypersurface $\Q^{3}(\tilde c)\subset \Q^4(c)$, $\tilde c\geq c$, with $\tilde c>0$ if $c=0$. If, in addition, $f$ is minimal, then $f(M^3)$ is an open subset of a generalized cone over a Clifford torus in an umbilical hypersurface $\Q^{3}(\tilde c)\subset \Q^4(c)$, $\tilde c>0$, with $\tilde c\geq c$ if $c>0$.
\end{proposition}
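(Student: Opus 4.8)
The plan is to work locally with the holonomic description from Corollary~\ref{le:asspair}: around any point $f$ is a holonomic hypersurface with associated pair $(v,V)$ satisfying~(\ref{holo.flat.med.2}), so that $(v,h,V)$ solves the system~(\ref{sistema-hol}). Write $\lambda_i=V_i/v_i$ for the principal curvatures and let $\lambda_k\equiv 0$ be the vanishing one; the other two being distinct and nonzero, $V_m\neq 0$ for $m\neq k$. Using the symmetry of~(\ref{holo.flat}) under $1\leftrightarrow 3$ together with a change of orientation one may assume $k\in\{1,2\}$, and substituting $V_1=0$ in~(\ref{holo.flat.med.2}) gives $v_1H=(v_2^2+v_3^2)/(v_2v_3)>0$; hence $k=1$ forces $H\neq 0$, so the vanishing principal curvature is necessarily $\lambda_2$ whenever $f$ is minimal.

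The next step is to extract a warped product structure for the metric. By equation~(iv) of~(\ref{sistema-hol}), $\partial V_k/\partial u_m=h_{mk}V_m$, so $V_k\equiv 0$ and $V_m\neq 0$ ($m\neq k$) force $h_{mk}=v_m^{-1}\partial v_k/\partial u_m=0$; thus $v_k$ depends only on $u_k$, and after $u_k\mapsto\int v_k\,du_k$ we may take $v_k\equiv 1$. Then~(iv) also gives $\partial V_m/\partial u_k=h_{km}V_k=0$, so each $V_m$ ($m\neq k$) depends only on the remaining two coordinates. Since~(\ref{holo.flat}) says $v$ is light-like, $V$ unit space-like and $v\perp V$, the computation in the proof of Corollary~\ref{le:asspair} gives $V=\lambda v_2^{-1}(-v_3,0,v_1)$ when $k=2$; with $V_1,V_3$ independent of $u_2$ this forces $v_1/v_3$ to be independent of $u_2$, while for $k=1$ one gets that $v_2,v_3$ are independent of $u_1$. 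In either case the induced metric becomes $dt^2+\rho(t)^2\,ds_N^2$, where $t$ is a reparametrization of $u_k$, $\rho=v_k$, and $ds_N^2$ is a fixed metric on a surface $N^2$ parametrized by the two remaining coordinates; the relative nullity distribution is $\spa\{\partial/\partial t\}$, which is autoparallel, and $f$ sends its leaves to geodesics of $\Q^4(c)$ orthogonal to the slices $\{t=\mathrm{const}\}$.

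In this situation $f$ is, locally, a generalized cone over the surface $g=f|_{\{t=t_0\}}\colon N^2\to\Q^3(\tilde c)\subset\Q^4(c)$ with $\tilde c=c+(\rho'(t_0)/\rho(t_0))^2$, degenerating to a cylinder over $g\colon N^2\to\R^3$ exactly when $\rho'\equiv 0$, which by the Gauss equation $K(\partial_t,\cdot)=-\rho''/\rho=c$ can happen only if $c=0$ (and then $k\in\{1,3\}$). To identify the profile, I would use that $M^3=I\times_\rho N^2$ is conformally flat: its Ricci tensor has $\partial_t$ as an eigendirection with a simple eigenvalue and a double eigenvalue on the slices, so demanding that the Schouten tensor be a Codazzi tensor (conformal flatness in dimension three) and evaluating the Codazzi identity on directions tangent to the slices forces the double Schouten eigenvalue, which equals $\tfrac12(K^{N}-(\rho')^2)/\rho^2$, to be constant along the slices; since $\rho=\rho(t)$ this yields $K^{N}\equiv$ const, so $g$ has constant Gauss curvature $\bar c$. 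One has $\bar c\neq\tilde c$ (resp.\ $\bar c\neq 0$ for the cylinder), since otherwise $g$ would be totally umbilical (resp.\ totally geodesic) and $f$ would have fewer than three distinct principal curvatures. This proves the first assertion.

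For the minimal case, $\lambda_2\equiv 0$ by the first paragraph, and~(\ref{holo.flat.med.2}) with $H=0$ gives $v_1=v_3$, hence $v_2=\sqrt2\,v_1$ depends only on $u_2$ and $ds_N^2$ is flat, i.e.\ $\bar c=0$. Since the principal curvatures of a cone over $g$ are $0$ together with those of $g$ rescaled by $\rho^{-1}$, minimality of $f$ makes $g$ a minimal surface of $\Q^3(\tilde c)$; the Gauss equation for $g$ then reads $0=\bar c=\tilde c-\tfrac12|I\!I_g|^2$, so $\tilde c>0$ (which excludes the cylinder alternative as well as the cases $c\le 0$ with $\tilde c\le 0$), and $g$ is isoparametric with principal curvatures $\pm\sqrt{\tilde c}$, hence an open subset of a Clifford torus; that $\tilde c\geq c$ when $c>0$ is the standard fact that totally umbilical hypersurfaces of $\Sf^4(c)$ have curvature at least $c$. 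The main obstacle, and the step I expect to be most delicate, is making rigorous the passage from the warped product / autoparallel relative nullity structure to the genuine generalized cone presentation with profile contained in an umbilical $\Q^3(\tilde c)$ — checking that the relative nullity geodesics fit together into a cone — together with the bookkeeping of the three cases for the vanishing principal curvature and the admissible ranges of $c$, $\tilde c$ and $\bar c$.
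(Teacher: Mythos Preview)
Your approach via the holonomic system~(\ref{sistema-hol}) is genuinely different from the paper's: the paper works instead with an orthonormal principal frame $e_1,e_2,e_3$ and the pointwise characterisation of conformal flatness from \cite{la}, namely $\langle\nabla_{e_i}e_j,e_k\rangle=0$ together with a linear relation among the $e_i(\lambda_j)$. From these it extracts directly that the distribution $\{e_2\}^\perp$ is umbilical in $M^3$ with mean curvature $\varphi=\lambda_j^{-1}e_2(\lambda_j)$ (the same value for $j=1,3$), and then splits into the case $\varphi\equiv 0$ (Riemannian product by the local de Rham theorem, then cylinder by Molzan's extrinsic product theorem, which forces $c=0$) and the case $\varphi\not\equiv 0$. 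Your route through the algebraic structure of $(v,V)$ is a reasonable alternative, and your Schouten--tensor argument for the constant curvature of the profile is more explicit than the paper's one-line appeal to conformal flatness of a warped product.

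There is, however, a slip in your second paragraph. After reparametrising $u_k$ so that $v_k\equiv 1$ you destroy the light-like condition $\sum_i\delta_i v_i^2=0$, so the formula $V=\lambda v_2^{-1}(-v_3,0,v_1)$ from the proof of Corollary~\ref{le:asspair} is no longer available. If instead you invoke that formula in the \emph{original} coordinates (where it is valid), then from $\partial V_1/\partial u_2=\partial V_3/\partial u_2=0$ you obtain that $v_1/v_2$ and $v_3/v_2$ are independent of $u_2$ --- not merely $v_1/v_3$ --- and it is precisely this stronger conclusion (combined with $v_2=v_2(u_2)$) that yields the warped product $dt^2+v_2(t)^2\,ds_N^2$. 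Likewise for $k=1$ one gets that $v_2/v_1$ and $v_3/v_1$ are independent of $u_1$, not that $v_2,v_3$ themselves are. These are easily repaired, but as written the deduction of the warped product form does not follow from the statement you recorded.

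For the step you flag as the main obstacle --- passing from the warped product/relative nullity description to an honest generalized cone over a surface in an umbilical $\Q^3(\tilde c)$ --- the paper's argument is short and worth knowing. For a leaf $\sigma$ of $\{e_2\}^\perp$ set $\tilde g=f|_\sigma$ and $\eta=f_*e_2$; then for $X$ tangent to $\sigma$,
\[
\tilde\nabla_X\eta=f_*\nabla_Xe_2+\alpha^f(X,e_2)=-\varphi\,\tilde g_*X,
\]
using $\lambda_2=0$ and the umbilicity of $\{e_2\}^\perp$. Thus $\eta$ is parallel in the normal connection of $\tilde g$ with $A^{\tilde g}_\eta=\varphi I$, which is exactly the standard criterion for $\tilde g(\sigma)$ to lie in an umbilical hypersurface $\Q^3(\tilde c)\subset\Q^4(c)$; since the geodesics of $\Q^4(c)$ orthogonal to this umbilical slice are the images under $f$ of the relative nullity leaves, $f$ coincides locally with the generalized cone over $g\colon\sigma\to\Q^3(\tilde c)$.
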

\proof Let $e_1, e_2, e_3$
denote local  unit vector fields which are principal directions correspondent to the distinct
principal curvatures $\lambda_1, \lambda_2, \lambda_3$, respectively. Then conformal flatness of $M^3$ is
equivalent to the relations
\be \label{eq:uno}
\<\nabla_{e_i}e_j,e_k\>=0
\ee
and
\be \label{eq:dos}
(\lambda_j-\lambda_k)e_i(\lambda_i)+(\lambda_i-\lambda_k)e_i(\lambda_j)+ (\lambda_j-\lambda_i)e_i(\lambda_k)=0,
\ee
for all distinct indices $i, j, k$ (see \cite{la}, p. 84). It follows from Codazzi's equation and (\ref{eq:uno}) that
\be\label{eq:tres}
\nabla_{e_i}e_i=\sum_{j\neq i}(\lambda_i-\lambda_j)^{-1}e_j(\lambda_i)e_j.
\ee
If, say, $\lambda_2=0$, then equation
(\ref{eq:dos}) yields
$$
\lambda_3^{-1}e_2(\lambda_3)=\lambda_1^{-1}e_2(\lambda_1):=\varphi,
$$
hence the distribution $\{e_2\}^\perp$ spanned by $e_1$ and $e_3$ is umbilical in $M^3$ by
(\ref{eq:tres}).

If $\varphi$ is identically zero on $M^3$, then $\{e_2\}^\perp$ is a totally geodesic distribution, and hence 
$M^3$ is locally isometric to a Riemannian product $I\times M^2$ by the local de Rham theorem. Since $M^3$ is conformally flat, it follows that $M^2$ must have constant Gauss curvature. Moreover, 
by Molzan's theorem (see Corollary $17$ in \cite{nol}), $f$ is locally an extrinsic product of isometric immersions of the factors, 
which is not possible if $c\neq 0$ because $f$ has three distinct principal curvatures. Therefore $c=0$ and $f$ is locally a cylinder over a surface with constant Gauss curvature in $\R^3$. 

If $\varphi$ is not identically zero on $M^3$, given $x\in M^3$ let $\sigma$ be the leaf of $\{e_2\}^\perp$ containing $x$ and let $j\colon\sigma\to M^3$ be the inclusion of  $\sigma$  into $M^3$. Denote  $\tilde g=f\circ j$. 
Then the normal bundle $N_{\tilde g}\sigma$ of $\tilde g$ splits as 
\begin{equation*}
N_{\tilde g}\sigma=f_*N_j\sigma\oplus N_fM=\spa\{f_*e_2\}\oplus N_fM
\end{equation*}
and
\bea
\tilde \nabla_X f_*e_2\!\!\!&=&\!\!\!f_*\nabla_Xe_2+\alpha^f(j_*X,e_2)\\
\!\!\!&=&\!\!\!-\varphi \tilde{g}_*X
\eea
for all $X\in \mathfrak{X}(\sigma)$, where $\tilde \nabla$ is the induced 
connection on ${\tilde g}^*T\Q^4(c)$. It follows that the normal vector field $\eta=f_*e_2$ of $\tilde g$ 
is parallel with respect 
to the normal connection of $\tilde g$, and that the shape operator of $\tilde g$ with 
respect to $\eta$  is 
given by 
$A^{\tilde g}_\eta=\varphi I$. 
It is a standard fact that this implies  $\tilde g(\sigma)$ to be contained in an umbilical hypersurface $\Q^{3}(\tilde c)\subset \Q^4(c)$, $\tilde c\geq c$, that is,  there exist an umbilical hypersurface $i\colon \Q^{3}(\tilde c)\to \Q^4(c)$   and an isometric immersion 
$g\colon M^2=\sigma\to\Q^3({\tilde c})$ such that $\tilde g=i\circ g$. 
Moreover,  since at any $y\in\sigma$ the fiber $L(y)\!=\!\spa\{\eta(y)\}$  
coincides with the normal space of $i$ at $g(y)$, it follows that $f$ coincides with 
the generalized cone over $g$ in a neighborhood of $x$. 

In particular, $M^3$ is a warped product $I\times_{\rho}M^2$, and since $M^3$ is conformally flat, $M^2$ must have constant Gauss curvature. If, in addition, $f$ is minimal, then $g$ must be a Clifford torus in an umbilical hypersurface $\Q^{3}(\tilde c)\subset \Q^4(c)$, $\tilde c>0$, with $\tilde c\geq c$ if $c>0$, and the preceding argument shows that $f(M^3)$ is an open subset of a generalized cone over~$g$.\qed

\section{Proof of Theorem \ref{thm:minimalceq0}}
First we rewrite Proposition (\ref{flat.med.reduz.result.}) when $H=0=c$ and state a converse to it.

\begin{proposition}\label{flat.min.reduz.result.}
Let $f\colon M^3\to \R^4$ be a holonomic hypersurface whose associated pair  $(v,V)$ satisfies 
\be\label{eq:vis}v_2^2=v_1^2+v_3^2\ee and 
\begin{equation}\label{holo.flat.min.2}
\begin{array}{l}
\displaystyle{V_1=-\frac{1}{3}\Big(\frac{v_2}{v_3}+\frac{v_3}{v_2}\Big)},\quad
\displaystyle{V_2=-\frac{1}{3}\Big(\frac{v_1}{v_3}-\frac{v_3}{v_1}\Big)}, \quad
\displaystyle{V_3=\frac{1}{3}\Big(\frac{v_1}{v_2}+\frac{v_2}{v_1}\Big)}.
\end{array}
\end{equation}
 Set $$\alpha=(\alpha_1,\alpha_2,\alpha_3)=\Big(\frac{1}{v_2}\frac{\partial v_2}{\partial u_1},\frac{1}{v_3}\frac{\partial v_3}{\partial u_2},\frac{1}{v_1}\frac{\partial v_1}{\partial u_3}\Big).$$
 Then  $\phi=(v_1,v_2,v_3,\alpha_1,\alpha_2,\alpha_3)$ satisfies the system of PDE's 
\begin{equation}\label{flat.min.reduz.}
\left\{\begin{array}{l}
\displaystyle{\frac{\partial \phi}{\partial u_1}=\Big(\frac{\partial v_1}{\partial u_1},v_2\alpha_1, \frac{v_3^5}{v_2^4}\alpha_1,\frac{\partial\alpha_1}{\partial u_1},2 \frac{v_1^2}{v_2^2}\alpha_1\alpha_2,2 \frac{v_3^2}{v_2^4}(4v_3^2+v_1^2)\alpha_1\alpha_3 \Big)}, \vspace{.25cm}\\
\displaystyle{\frac{\partial \phi}{\partial u_2}=\Big( \frac{v_1^5}{v_3^4}\alpha_2, \frac{\partial v_2}{\partial u_2}, v_3\alpha_2, 2 \frac{v_1^2}{v_3^4}(4v_1^2- v_2^2)\alpha_1\alpha_2,\frac{\partial\alpha_2}{\partial u_2},2 \frac{v_2^2}{v_3^2}\alpha_2\alpha_3 \Big)},\vspace{.25cm}\\
\displaystyle{\frac{\partial \phi}{\partial u_3}=\Big(   v_1\alpha_3,\frac{v_2^5}{v_1^4}\alpha_3, \frac{\partial v_3}{\partial u_3}, -2 \frac{v_3^2}{v_1^2}\alpha_1\alpha_3, 2 \frac{v_2^2}{v_1^4}(4v_2^2- v_3^2)\alpha_2\alpha_3,\frac{\partial\alpha_3}{\partial u_3} \Big),}
\end{array}\right.
\end{equation}
where
$$
\begin{array}{l}
\displaystyle \frac{\partial v_1}{\partial u_1}=\frac{v_1}{v_2^4}(v_2^4+v_2^2v_3^2+v_3^4)\alpha_1, \;\;\;
\displaystyle \frac{\partial v_2}{\partial u_2}=\frac{v_2}{v_3^4}(v_1^4-v_1^2v_3^2+v_3^4)\alpha_2, \vspace{.2cm}\\
\displaystyle \frac{\partial v_3}{\partial u_3}=\frac{v_3}{v_1^4}(v_1^4+v_1^2v_2^2+v_2^4)\alpha_3, \vspace{.2cm}\\
\displaystyle{\frac{\partial \alpha_1}{\partial u_1} = \frac{1}{v_2^4}(3v_2^4-v_3^4)\alpha_1^2 - \frac{v_1^6}{v_3^8}(3v_1^2-2v_3^2)\alpha_2^2 + \frac{v_2^4}{v_1^2v_3^4}(3v_1^2+2v_2^2)\alpha_3^2} \vspace{1ex}\\\hspace*{6ex} \displaystyle{+ \frac{1}{9v_3^4}(5v_1^4+2v_2^2v_3^2)},\vspace{.2cm}\\
\displaystyle{\frac{\partial \alpha_2}{\partial u_2} =-\frac{v_3^4}{v_1^4v_2^2}(3v_2^2 + 2v_3^2) \alpha_1^2 -\frac{1}{v_3^4}(v_1^4 - 3v_3^4)\alpha_2^2 - \frac{v_2^6}{v_1^8}(2v_1^2+3v_2^2)\alpha_3^2} \vspace{1ex}\\\hspace*{6ex}\displaystyle{-\frac{1}{9v_1^4}(5v_2^4-2v_1^2v_3^2)},\vspace{.2cm}\\
\displaystyle{\frac{\partial \alpha_3}{\partial u_3} = \frac{v_3^6}{v_2^8}(2v_2^2+3v_3^2)\alpha_1^2 + \frac{v_1^4}{v_2^4v_3^2}(2v_1^2-3v_3^2)\alpha_2^2 + \frac{1}{v_1^4}(3v_1^4-v_2^4)\alpha_3^2} \vspace{1ex}\\\hspace*{6ex}\displaystyle{+ \frac{1}{9v_2^4}(5v_3^4+2v_1^2v_2^2)},
\end{array}
$$
as well as  the algebraic equation 
\begin{equation}\label{equ.chave}
\begin{array}{rcr}
9v_1^2v_3^8(v_2^2+v_3^2)\alpha_1^2  +   9v_1^8v_2^2(v_1^2-v_3^2)\alpha_2^2  - 9v_2^8v_3^2(v_1^2+v_2^2)\alpha_3^2 \vspace{.2cm}\\
  - \ v_1^2v_2^2v_3^2(2v_1^2v_2^4-2v_1^2v_3^4-2v_2^2v_3^4-v_1^2v_2^2v_3^2) = 0.
\end{array}
\end{equation}
Conversely, if $\phi=(v_1,v_2,v_3,\alpha_1,\alpha_2,\alpha_3)$ is a solution of $(\ref{flat.min.reduz.})$ satisfying (\ref{eq:vis})  on an open simply-connected subset $U\subset \R^3$, then $\phi$ satisfies  (\ref{equ.chave}) and the  triple $(v, h, V)$, where $v=(v_1,v_2, v_3)$,  $V=(V_1, V_2, V_3)$ is given by $(\ref{holo.flat.min.2})$ and $h=(h_{ij})$, with  $h_{ij}=\frac{1}{v_i}\frac{\d v_j}{\d u_i}$, $1\leq i\neq j\leq 3$,  satisfies (\ref{sistema-hol}),  and hence gives rise to a holonomic  hypersurface $f\colon U\to \R^4$ whose associated pair $(v,V)$ satisfies (\ref{eq:vis}) and  (\ref{holo.flat.min.2}). 
\end{proposition}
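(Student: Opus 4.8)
The statement has a direct and a converse part, and I would handle them in turn.

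For the direct part, observe that its hypotheses are exactly those of Proposition~\ref{flat.med.reduz.result.} with $c=0$ and $H\equiv 0$: the target is $\R^4=\Q^4(0)$, and since (\ref{eq:vis}) together with (\ref{holo.flat.min.2}) is precisely (\ref{holo.flat.med.2}) with $H=0$, Corollary~\ref{le:asspair} shows that $f$ has identically vanishing, hence constant, mean curvature. Setting $c=0$ and $H=0$ in (\ref{eq:e0})--(\ref{eq:e3}) annihilates every term containing $c$ or $H$ and reproduces verbatim the system (\ref{flat.min.reduz.}), while doing the same in the algebraic relations (\ref{equ.alg.f.e.}) leaves $\tfrac{30v_1}{v_2v_3^9}F\alpha_2=0$, $\tfrac{30v_2}{v_1^9v_3}F\alpha_3=0$ and $\tfrac{30v_3}{v_1v_2^9}F\alpha_1=0$. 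Since $27v_1v_2v_3\,F$ equals the left-hand side $\Psi$ of (\ref{equ.chave}), this says $\Psi\alpha_i=0$ for $i=1,2,3$; as the $\alpha_i$ do not vanish simultaneously on any open set by Lemma~\ref{le:alphaiszero}, $\Psi$ vanishes on a dense open subset, hence everywhere on $M^3$, which is (\ref{equ.chave}).

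For the converse, let $\phi=(v_1,v_2,v_3,\alpha_1,\alpha_2,\alpha_3)$ with $v_i>0$ be a solution of (\ref{flat.min.reduz.}) and (\ref{eq:vis}) on a simply-connected $U\subset\R^3$. I would first obtain (\ref{equ.chave}) by the integrability computation from the end of the proof of Proposition~\ref{flat.med.reduz.result.}, read the other way around: differentiating (\ref{flat.min.reduz.}) once more and substituting again from (\ref{flat.min.reduz.}) yields the algebraic identity
\begin{equation*}
\frac{\partial^2\alpha_1}{\partial u_2\,\partial u_1}-\frac{\partial^2\alpha_1}{\partial u_1\,\partial u_2}=\frac{30v_1}{v_2v_3^9}\,F\,\alpha_2
\end{equation*}
together with its two cyclic analogues, valid for every $(v,\alpha)$ obeying the first-order relations in (\ref{flat.min.reduz.}). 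For a genuine solution the left-hand sides vanish, so $F\alpha_i=0$, i.e.\ $\Psi\alpha_i=0$, for $i=1,2,3$. To deduce $\Psi\equiv 0$ I would rule out an open set $W$ on which $\alpha_1=\alpha_2=\alpha_3=0$: on such a $W$ the diagonal equation in (\ref{flat.min.reduz.}) would force $\partial\alpha_1/\partial u_1=\tfrac{1}{9v_3^4}(5v_1^4+2v_2^2v_3^2)>0$, contradicting $\alpha_1\equiv 0$ there. Hence some $\alpha_i$ is nonzero on a dense open subset of $U$, so $\Psi=0$ there and therefore on all of $U$. It then remains to check that the triple $(v,h,V)$, with $h_{ij}=\tfrac{1}{v_i}\tfrac{\partial v_j}{\partial u_i}$ for $i\neq j$ and $V=(V_1,V_2,V_3)$ given by (\ref{holo.flat.min.2}), satisfies equations (i)--(iv) of (\ref{sistema-hol}) with $c=0$; Proposition~\ref{fund} then furnishes a holonomic hypersurface $f\colon U\to\R^4$ with fundamental forms (\ref{fundforms}), whose associated pair is $(v,V)$ and hence satisfies (\ref{eq:vis}) and (\ref{holo.flat.min.2}) by construction. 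Here (i) is just the definition of $h$, and (ii), (iv) follow by direct differentiation from (\ref{flat.min.reduz.}) and (\ref{holo.flat.min.2}); the relations $v_j^5h_{ki}=v_i^5h_{kj}$ and the analogues of equations (v)--(vi) of (\ref{hol.flat.f.esp.}) (which hold because (\ref{eq:vis}) and (\ref{holo.flat.min.2}) make the identities (\ref{holo.flat}) true, with $\delta=(1,-1,1)$) are likewise immediate. Given these, the three Gauss equations (iii) reduce, exactly as in the proof of Proposition~\ref{flat.med.reduz.result.}, to the linear system $MP=-B$ with $c=H=0$ for $P=(\partial\alpha_1/\partial u_1,\partial\alpha_2/\partial u_2,\partial\alpha_3/\partial u_3)$, whose matrix is invertible since $v_i>0$ and whose unique solution is the triple of diagonal equations in (\ref{flat.min.reduz.}); hence (iii) holds.

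The main obstacle is computational: verifying (ii)--(iv) and producing the integrability identities above requires lengthy but entirely routine polynomial algebra in $v_1,v_2,v_3,\alpha_1,\alpha_2,\alpha_3$. The one genuinely non-mechanical point is deducing (\ref{equ.chave}) from the integrability relations, which relies on excluding the locus where $\alpha_1,\alpha_2,\alpha_3$ vanish simultaneously.
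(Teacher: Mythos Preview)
Your proposal is correct and matches the paper's implicit approach. The paper offers no explicit proof here, merely introducing the proposition as a rewriting of Proposition~\ref{flat.med.reduz.result.} for $H=0=c$ together with a converse; your direct part is exactly that specialization (with the extra observation, via Lemma~\ref{le:alphaiszero}, needed to pass from $F\alpha_i=0$ to $F=0$), and your converse reverses the derivation in Proposition~\ref{flat.med.reduz.result.} and invokes Proposition~\ref{fund}, which is the natural route.
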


In view of Corollary \ref{le:asspair} and Proposition \ref{flat.min.reduz.result.}, minimal conformally flat hypersurfaces of $\R^4$ are in correspondence with solutions $\phi=(v_1,v_2,v_3,\alpha_1,\alpha_2,\alpha_3)$  of (\ref{flat.min.reduz.}) satisfying (\ref{eq:vis}) and (\ref{equ.chave}). We shall prove  that such solutions  are, in turn,  in correspondence with the leaves of a foliation of codimension one on the algebraic variety constructed in the next result.

\begin{proposition}\label{prop.chave}  
Define $G,F:\R^6=\R^3\times \R^3\to\R$  by  
$$G(x,y)= x_2^2-x_1^2-x_3^2$$
 and
\begin{equation*}
\begin{array}{r}
F(x,y)= 9x_1^2x_3^8(x_2^2+x_3^2)y_1^2 + 9x_1^8x_2^2(x_1^2-x_3^2)y_2^2 - 9x_2^8x_3^2(x_1^2+x_2^2)y_3^2 \vspace{.2cm}\\
  - x_1^2x_2^2x_3^2(2x_1^2x_2^4-2x_1^2x_3^4-2x_2^2x_3^4-x_1^2x_2^2x_3^2).
\end{array}
\end{equation*}
Let
$M^4:=F^{-1}(0)\cap G^{-1}(0)\cap\{(x,y)\in \R^6;x_1>0,x_2>0,x_3>0\,\,\mbox{and}\,\,y\neq 0\}$
and let $\ell_{\pm}$ be the half lines in $M^4$ given by  $$\ell_{\pm}=\{(x,y)\in M^4\;:\; x=s(1, \sqrt{2},1)\,\,\mbox{for some $s>0$} \,\,\mbox{and}\,\, y=(0,\pm 1,0)\}.$$ Then $\tilde{M}^4=M^4\setminus (\ell_-\cup \ell_+)$ is a regular submanifold of $\R^6$ and $\ell_-\cup \ell_+$ is the singular set of $M^4$.
\end{proposition}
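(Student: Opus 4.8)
The plan is to verify that $\tilde M^4$ is a regular submanifold of $\R^6$ by showing that $0$ is a regular value of the map $(F,G)\colon W\to\R^2$, where $W=\{(x,y)\in\R^6: x_1,x_2,x_3>0,\ y\neq 0\}$, away from the two half lines $\ell_\pm$, and then to check that along $\ell_\pm$ the differential $d(F,G)$ does drop rank, so that these points are genuinely singular. Concretely, I would compute the Jacobian matrix of $(F,G)$, which is a $2\times 6$ matrix; since $dG=(-2x_1,2x_2,-2x_3,0,0,0)$ never vanishes on $W$, regularity of the value $0$ amounts to showing that $dF$ is not a scalar multiple of $dG$ at any point of $M^4\setminus(\ell_-\cup\ell_+)$, i.e.\ that the $2\times 6$ matrix has rank $2$ there.

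The key steps, in order, are: (1) write out the six partial derivatives of $F$; the three $y$-derivatives are $\partial F/\partial y_1 = 18x_1^2x_3^8(x_2^2+x_3^2)y_1$, $\partial F/\partial y_2 = 18x_1^8x_2^2(x_1^2-x_3^2)y_2$, $\partial F/\partial y_3 = -18x_2^8x_3^2(x_1^2+x_2^2)y_3$, while the $x$-derivatives are polynomials in $x$ and $y$. (2) Since the last three columns of $dG$ are zero, the matrix $d(F,G)$ has rank $2$ as soon as one of $\partial F/\partial y_1,\partial F/\partial y_2,\partial F/\partial y_3$ is nonzero; on $W$ the coefficients $18x_1^2x_3^8(x_2^2+x_3^2)$ and $-18x_2^8x_3^2(x_1^2+x_2^2)$ are strictly positive, so the only way all three $y$-derivatives vanish is $y_1=0$, $y_3=0$, and either $y_2=0$ (excluded since $y\neq0$) or $x_1^2=x_3^2$, i.e.\ $x_1=x_3$. (3) So the only candidates for critical points have $y=(0,y_2,0)$ with $y_2\neq0$ and $x_1=x_3$; combined with $G=0$, i.e.\ $x_2^2=x_1^2+x_3^2=2x_1^2$, this forces $x=s(1,\sqrt2,1)$ for $s=x_1>0$, and then the defining equation $F=0$ must be checked: substituting $x=s(1,\sqrt2,1)$, $y=(0,y_2,0)$ into $F$ gives $F=9x_1^8x_2^2(x_1^2-x_3^2)y_2^2-x_1^2x_2^2x_3^2(\cdots)=0-s^6\cdot 2s^2\cdot(2\cdot s^2\cdot 4s^4-2s^2s^4-2\cdot2s^2\cdot s^4-s^2\cdot2s^2\cdot s^2)$; one computes the bracket $2x_1^2x_2^4-2x_1^2x_3^4-2x_2^2x_3^4-x_1^2x_2^2x_3^2 = 8s^6-2s^6-4s^6-2s^6=0$, so $F$ vanishes automatically and $y_2$ is a free parameter — this is exactly $\ell_+\cup\ell_-$ (up to the sign/normalization of $y_2$, which one absorbs since $y_2$ ranges over $\R\setminus\{0\}$ but $M^4$ does not constrain its size; if the statement intends $y_2=\pm1$ one notes $F$ and $G$ are not homogeneous of the same degree in $y$, so in fact the equation $F=0$ with $x=s(1,\sqrt2,1)$ holds for the specific relation between $s$ and $y_2$ coming from $F=0$, which here degenerates — I would double-check the homogeneity bookkeeping here). (4) Conclude that on $\tilde M^4=M^4\setminus(\ell_-\cup\ell_+)$ the rank is $2$, so $\tilde M^4$ is a regular $4$-dimensional submanifold; and along $\ell_\pm$ all three $y$-derivatives of $F$ vanish while $dG\neq0$ is supported on the $x$-block, so $dF$ restricted to the $x$-block must also be checked to be proportional to $dG$ — evaluating $\partial F/\partial x_i$ at $x=s(1,\sqrt2,1)$, $y_2$ arbitrary, one sees the $y_2^2$ and constant terms and verifies the resulting vector is a multiple of $(-1,\sqrt2,-1)\propto(-2x_1,2x_2,-2x_3)$, confirming the rank genuinely drops to $1$ there, so $\ell_-\cup\ell_+$ is the singular set.

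The main obstacle I anticipate is step (3)–(4): the algebra of substituting $x=s(1,\sqrt2,1)$ into the lengthy $x$-derivatives of $F$ and confirming that the resulting gradient vector is exactly proportional to $dG$ (and in particular that $F$ really does vanish identically along $\ell_\pm$, making them lie in $M^4$ rather than just being limit points). A secondary subtlety is the precise normalization $y=(0,\pm1,0)$ versus a free $y_2$: because $F$ is inhomogeneous in $y$ whereas its degree-$2$-in-$y$ part vanishes along $x=s(1,\sqrt2,1)$, the equation $F=0$ along that ray reduces to a relation that is independent of $y_2$, so the entire line $\{(s(1,\sqrt2,1),(0,y_2,0)):y_2\neq0\}$ lies in $M^4$; I would reconcile this with the stated $\ell_\pm$ either by noting a scaling normalization or by correcting the parametrization, but in any case the singular locus is the union of these two rays and the regularity argument on the complement is unaffected.
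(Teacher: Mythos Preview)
Your overall strategy---show that $d(F,G)$ has rank $2$ on $M^4\setminus(\ell_-\cup\ell_+)$ and rank $1$ on $\ell_\pm$---is exactly the paper's approach, and your steps (1)--(3) are correct. But there is a genuine gap in step (4), and it is precisely the source of the ``normalization $y_2=\pm1$'' confusion you flag at the end.

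Your computation in step (3) shows correctly that the entire $2$-parameter family
\[
S=\{(s(1,\sqrt{2},1),(0,y_2,0)):s>0,\ y_2\neq 0\}
\]
lies in $M^4$: both $G$ and $F$ vanish there regardless of $y_2$. Your $y$-derivative argument (step (2)) only tells you that all of $S$---not just $\ell_\pm$---is where the three $y$-partials of $F$ vanish simultaneously. So to finish, you must examine the $x$-block of $\nabla F$ along $S$ and determine for which $y_2$ it is proportional to $\nabla G$. You assert in step (4) that this proportionality holds for $y_2$ arbitrary; this is false. Evaluating at $x=s(1,\sqrt{2},1)$, $y=(0,y_2,0)$ gives
\[
\Big(\tfrac{\partial F}{\partial x_1},\tfrac{\partial F}{\partial x_2},\tfrac{\partial F}{\partial x_3}\Big)
=s^{11}\big(36y_2^2-16,\ -20\sqrt{2},\ -36y_2^2+56\big),
\]
while $\nabla G=2s(-1,\sqrt{2},-1)$. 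Comparing the second entries forces the proportionality constant to be $-10s^{10}$, and then the first (or third) entry gives $36y_2^2-16=20$, i.e.\ $y_2^2=1$. Thus the rank of $d(F,G)$ drops to $1$ exactly on $\ell_-\cup\ell_+$, and at the remaining points of $S$ (where $y_2\neq\pm1$) the rank is $2$ even though all $y$-partials vanish. This is what singles out $y_2=\pm1$; it is not a normalization or scaling convention. The paper carries out exactly this computation (its ``Fact~2''), so once you correct step (4) your proof coincides with the paper's.
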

\proof If $p\in M^4$ we have $\nabla G(p)=\big(-2x_1,2x_2,-2x_3,0,0,0\big)$, while the components of $\nabla F(p)$ are given by 
$$x_1\frac{\partial F}{\partial x_1}(p)= 18x_1^8x_2^2(4x_1^2-3x_3^2)y_2^2 + 18x_2^{10}x_3^2y_3^2-2x_1^4x_2^2x_3^2(2x_2^4-x_2^2x_3^2-2x_3^4),$$
$$x_2\frac{\partial F}{\partial x_2}(p)=-18x_1^2x_3^{10}y_1^2-18x_2^8x_3^2(3x_1^2+4x_2^2)y_3^2 -2x_1^2x_2^4x_3^2(4x_1^2x_2^2-x_1^2x_3^2-2x_3^4),$$
$$x_3\frac{\partial F}{\partial x_3}(p)= 18x_1^2x_3^8 (3x_2^2+4x_3^2)y_1^2 - 18x_1^{10}x_2^2y_2^2 +2x_1^2x_2^2x_3^4(x_1^2x_2^2+4x_1^2x_3^2+4x_2^2x_3^2),$$
$$\frac{\partial F}{\partial y_1}(p)= 18x_1^2x_3^8(x_2^2+x_3^2)y_1,\,\,\,\,\,
\frac{\partial F}{\partial y_2}(p)=18x_1^8 x_2^2(x_1^2-x_3^2)y_2$$
and
$$\frac{\partial F}{\partial y_3}(p)=-18x_2^8(x_1^2+x_2^2)x_3^2y_3.$$
That  $M^4\setminus (\ell_+\cup \ell_-)$ is a smooth submanifold of $\R^6$ and $\ell_-\cup \ell_+$ is the singular set of $M^4$ is a consequence of the next two facts.\vspace{1ex}

\noindent {\bf Fact 1:}
$\nabla F(p)\neq 0$ for all $p\in M^4.$ 
\proof If $\nabla F(p)=0$ at $p=(x_1,x_2, x_3, y_1, y_2, y_3)$ then from $\frac{\partial F}{\partial y_1}(p)=0$ it follows that  $y_1=0$, whereas 
$\frac{\partial F}{\partial y_3}(p)=0$ implies that   $y_3=0$. Thus $y_2\neq 0$, and hence   $x_3=x_1$ from $\frac{\partial F}{\partial y_2}(p)=0$. Therefore $x_2=\sqrt{2}x_1,$ and then  $0=\frac{\partial F}{\partial x_2}(p)=-20\sqrt{2}x_1^{11},$ which  contradicts the fact that $x_1>0$. \vspace{1ex} \\
\noindent {\bf Fact 2:} The subset $\{p\in M^4\,:\,\nabla F(p)=a\nabla G(p)\,\,\,\mbox{for some}\,\,\,a\in \R-\{0\}\}$ coincides with $\ell_{-}\cup \ell_{+}.$ 
\proof Assume that 
\be\label{eq:nFnG}\nabla F(p)=a\nabla G(p)\ee for some $a\in \R-\{0\}$. Equation  (\ref{eq:nFnG})  gives us six equations, the last three of which yield $y_1=y_3=0$ and $x_3=x_1$. Since  $x_2^2=x_1^2+x_3^2,$ we obtain that $x_2=\sqrt{2}x_1.$ Using this and the second of such equations we obtain that  $a=-10x_1^{10}.$ Finally, the first one implies that $y_2^2=1$. \vspace{1ex} \qed

\begin{proposition}\label{prop.chaveb}  
Let  $X_1,X_2,X_3\colon M^4\to\R^6$ be defined by 
\begin{equation*}
\begin{array}{lcl}
\displaystyle{X_1(p)=\frac{1}{x_2^4}\Big((x_2^4+x_2^2x_3^2+x_3^4)x_1y_1,x_2^5y_1, x_3^5y_1,x_2^4 A_1(p)},\vspace{1ex}\\\hspace*{13ex} 2 x_1^2x_2^2y_1y_2,(8x_3^2+2x_1^2)x_3^2y_1y_3\Big), \vspace{.2cm} \\
\displaystyle{X_2(p)=\frac{1}{x_3^4}\Big( x_1^5y_2,(x_1^4-x_1^2x_3^2+x_3^4)x_2y_2,x_3^5y_2} ,\vspace{1ex}\\\hspace*{13ex}(8x_1^2- 2x_2^2)x_1^2y_1y_2, x_3^4A_2(p),2 x_2^2x_3^2y_2y_3 \Big), \vspace{.2cm} \\
\displaystyle{X_3(p)=\frac{1}{x_1^4}\Big( x_1^5y_3,x_2^5y_3, (x_1^4+x_1^2x_2^2+x_2^4)x_3y_3,-2x_1^2x_3^2y_1y_3},\vspace{1ex}\\\hspace*{13ex}(8x_2^2-2 x_3^2)x_2^2y_2y_3,x_1^4A_3(p) \Big),
\end{array}
\end{equation*}
 where 
\begin{equation*}
\begin{array}{l}
\displaystyle{A_1(p) = \frac{1}{x_2^4}(3x_2^4-x_3^4)y_1^2 - \frac{x_1^6}{x_3^8}(3x_1^2-2x_3^2)y_2^2 + \frac{x_2^4}{x_1^2x_3^4}(3x_1^2+2x_2^2)y_3^2} \vspace{1ex}\\\hspace*{10ex} \displaystyle{+ \frac{1}{9x_3^4}(5x_1^4+2x_2^2x_3^2)}, 
\end{array}
\end{equation*}
\begin{equation*}
\begin{array}{l}
\displaystyle{A_2(p) =-\frac{x_3^4}{x_1^4x_2^2}(3x_2^2 + 2x_3^2) y_1^2 -\frac{1}{x_3^4}(x_1^4 - 3x_3^4)y_2^2 - \frac{x_2^6}{x_1^8}(2x_1^2+3x_2^2)y_3^2} \vspace{1ex}\\\hspace*{10ex}\displaystyle{-\frac{1}{9x_1^4}(5x_2^4-2x_1^2x_3^2)},
\end{array}
\end{equation*}
\begin{equation*}
\begin{array}{l}
\displaystyle{A_3(p) = \frac{x_3^6}{x_2^8}(2x_2^2+3x_3^2)y_1^2 + \frac{x_1^4}{x_2^4x_3^2}(2x_1^2-3x_3^2)y_2^2 + \frac{1}{x_1^4}(3x_1^4-x_2^4)y_3^2} \vspace{1ex}\\\hspace*{10ex}\displaystyle{+ \frac{1}{9x_2^4}(5x_3^4+2x_1^2x_2^2)}.
\end{array}
\end{equation*}
Then the following assertions hold:
\begin{itemize}
\item[(i)]  $X_1(p),X_2(p),X_3(p)$ are linearly independent for all $p\in~\tilde M^4$.
\item[(ii)]   $\{p\in M^4;X_1(p)=0\}=\ell_-\cup \ell_+=\{p\in M^4;X_3(p)=0\}$.
\item[(iii)] The vector fields  $X_1,X_2,X_3$ are everywhere tangent to  $\tilde{M}^4$ and the curves 
 $\gamma_\pm\colon \R\to \R^6$ given by  $\gamma_\pm(t)=(e^t,\sqrt{2}e^t,e^t,0,\pm 1,0)$,
 are integral curves of $X_2$ with $\gamma_\pm(\R)=\ell_\pm$.
 \item[(iv)] $[X_i, X_j]=0$ on $\tilde{M}^4$ for all $1\leq i\neq j\leq 3$.
 \end{itemize}
 \end{proposition}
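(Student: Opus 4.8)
The vector fields $X_1,X_2,X_3$ are nothing but the three right-hand sides of the system (\ref{flat.min.reduz.}), with $(v_1,v_2,v_3,\alpha_1,\alpha_2,\alpha_3)$ replaced by $(x_1,x_2,x_3,y_1,y_2,y_3)$; so (i)--(iv) together say that (\ref{flat.min.reduz.}) cuts out along $M^4$ a rank-three involutive distribution whose singular set is $\ell_-\cup\ell_+$. The plan is to prove the four items in the order (ii), (i), (iii), (iv), since (i) relies on (ii), and the tangency and bracket statements in (iii)--(iv) are both controlled by the relations $F=0$ and $G=0$.

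For (ii): from the explicit formulas the first three coordinates of $X_1(p)$ are $y_1$ times positive functions of $x$, so they vanish simultaneously if and only if $y_1=0$, and when $y_1=0$ the fifth and sixth coordinates of $X_1(p)$ vanish too while the fourth equals $A_1(p)$; hence $X_1(p)=0$ exactly when $y_1=0$ and $A_1(p)=0$. On $M^4$ one also has $G(p)=0$, i.e.\ $x_2^2=x_1^2+x_3^2$, and $F(p)=0$. Substituting $y_1=0$ and $x_2^2=x_1^2+x_3^2$ and regarding $A_1(p)=0$ and $F(p)=0$ as two linear equations in the unknowns $(y_2^2,y_3^2)$, a sign inspection of the solution shows that it has $y_2^2\ge 0$, $y_3^2\ge 0$ and $(y_2,y_3)\neq(0,0)$ only if $x_1=x_3$; then $x_2=\sqrt2\,x_1$ and the two equations force $y_3=0$ and $y_2^2=1$, so $p\in\ell_-\cup\ell_+$. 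The reverse inclusion $\ell_-\cup\ell_+\subseteq\{X_1=0\}$ is checked by substituting $x=s(1,\sqrt2,1)$ and $y=(0,\pm1,0)$ into $A_1$ and obtaining $0$. The identity $\{X_3=0\}=\ell_-\cup\ell_+$ follows in the same way, now using that the vanishing of the first three coordinates of $X_3(p)$ forces $y_3=0$ and that the sixth coordinate of $X_3(p)$ equals $A_3(p)$.

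For (i): fix $p\in\tilde M^4$. By (ii) we have $X_1(p)\neq 0\neq X_3(p)$, and also $X_2(p)\neq 0$, since if $y_2=0$ then $X_2(p)=A_2(p)\,e_5$ and, using $x_2^2=x_1^2+x_3^2$, each summand of $A_2$ is $\le 0$ and the last one is strictly negative, so $A_2(p)<0$. Now argue according to how many of $y_1,y_2,y_3$ vanish (not all three, as $y\neq 0$ on $M^4$). If exactly one, say $y_i$, vanishes, then $X_i(p)$ is a nonzero multiple of $e_{i+3}$ while $X_j(p)$ and $X_k(p)$ have vanishing $(i+3)$-th coordinate, and independence of $X_j(p),X_k(p)$ follows by comparing their first three coordinates: after cancelling common factors these cannot be proportional, for proportionality would force through $x_2^2=x_1^2+x_3^2$ an impossible identity such as $3x_1^2x_3^2=0$. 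If exactly two, say $y_i$ and $y_j$, vanish, then $X_i(p),X_j(p)$ are nonzero multiples of $e_{i+3},e_{j+3}$ while $X_k(p)$ has nonzero first coordinate and vanishing $(i+3)$-th and $(j+3)$-th coordinates, so the three are independent. If $y_1y_2y_3\neq 0$, the $\{1,2,3\}$-minor of $(X_1(p),X_2(p),X_3(p))$ in fact vanishes identically on $G^{-1}(0)$; in this case one checks that the (unique up to scalar) linear relation among the first three coordinates does not persist in the fourth coordinate --- a single scalar identity which, modulo $x_2^2=x_1^2+x_3^2$, is a nonvanishing rational function of $x_1,x_2,x_3$ for $x_\ell>0$ --- so $X_1(p),X_2(p),X_3(p)$ are independent.

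For (iii) and (iv): tangency of $X_i$ to $M^4$ means $dG_p(X_i(p))=0$ and $dF_p(X_i(p))=0$ for all $p\in M^4$. One verifies directly that $dG(X_i)$ is divisible by $G$ in the rational functions on $\{x_\ell>0\}$ --- for instance $dG(X_2)=2x_3^{-4}y_2\,G\,(x_1^4-x_1^2x_3^2+x_3^4)$ --- and that $dF(X_i)$ lies in the ideal generated by $F$ and $G$; the latter is the infinitesimal form of the fact, established in the proof of Proposition \ref{flat.min.reduz.result.} (equivalently (\ref{equ.alg.f.e.}) of Proposition \ref{flat.med.reduz.result.} with $H=c=0$), that the left-hand side of (\ref{equ.chave}) is a first integral of (\ref{flat.min.reduz.}) along $\{x_2^2=x_1^2+x_3^2\}$. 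Hence $dF(X_i)$ and $dG(X_i)$ vanish on $M^4=\{F=G=0\}$, so $X_1,X_2,X_3$ are tangent to $M^4$, in particular to the regular submanifold $\tilde M^4$. Along $\gamma_\pm(t)=(e^t,\sqrt2\,e^t,e^t,0,\pm1,0)$ one has $x_1=x_3$, $x_2=\sqrt2\,x_1$ and $y_1=y_3=0$, $y_2^2=1$, whence $A_2\equiv 0$; substituting into $X_2$ gives $X_2(\gamma_\pm(t))=\pm\gamma_\pm'(t)$, so $\gamma_\pm(\R)=\ell_\pm$ is an integral curve of $X_2$. Finally, $[X_i,X_j]=0$ on $\tilde M^4$ is the Frobenius integrability condition for the overdetermined system (\ref{flat.min.reduz.}); one computes $[X_i,X_j]$ on $\{x_\ell>0\}$ and checks that every component lies in the ideal generated by $F$ and $G$ --- for the $\alpha$-components this is precisely the mixed-partial identity $\partial^2\alpha_i/\partial u_j\partial u_k=\partial^2\alpha_i/\partial u_k\partial u_j$ used in the proof of Proposition \ref{flat.med.reduz.result.} to derive (\ref{equ.alg.f.e.}) (with $H=c=0$, so that the obstruction is a multiple of $F$), and for the $v$-components and off-diagonal $\alpha$-equations it is a polynomial identity modulo $G$ --- so the bracket vanishes on $M^4\supseteq\tilde M^4$. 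I expect the genuine obstacle to be the sign analysis in (ii) (no nonnegative nonzero solution $(y_2^2,y_3^2)$ of $\{A_1=0,\ F=0\}$ when $x_1\neq x_3$) together with the bookkeeping needed in (iii)--(iv) to exhibit $dF(X_i)$ and the bracket components explicitly inside the ideal $(F,G)$; once that is in place, (i) is a finite case check and (iii)--(iv) reduce to computations already carried out for Proposition \ref{flat.med.reduz.result.}.
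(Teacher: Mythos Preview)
Your proposal is correct in outline and for (ii), (iii), (iv) follows essentially the paper's route: (ii) by solving $\{A_1=0,\,F=0,\,G=0\}$ as a linear system in $(y_2^2,y_3^2)$ and reading off that $y_3^2=-(x_1^2-x_3^2)^2/9x_1^4\le 0$ forces $x_1=x_3$; (iii) and (iv) by exhibiting $\langle\nabla F,X_i\rangle$, $\langle\nabla G,X_i\rangle$ and the components of $[X_i,X_j]$ explicitly as multiples of $F$ (not merely of the ideal $(F,G)$, which is what the paper actually gets). Your observation that $X_2(\gamma_\pm(t))=\pm\gamma_\pm'(t)$, so that strictly speaking only $\gamma_+$ is an integral curve while $\gamma_-(-t)$ is, is sharper than the paper's verification; the content of (iii) --- that $\ell_\pm$ are orbits of $X_2$ --- is unaffected. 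Your conceptual framing of (iii)--(iv) via the compatibility computations already carried out for Proposition~\ref{flat.med.reduz.result.} is a nice touch the paper does not make explicit.

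For (i) you take a genuinely different path. The paper first shows pairwise independence of $X_1,X_2,X_3$ (so any nontrivial relation has all $\lambda_i\neq 0$), then extracts from $\sum\lambda_iX_i=0$ a linear system in $(y_1^2,y_2^2,y_3^2)$ whose coefficient matrix has determinant divisible by $G$ while a bordered minor does not, so the system is inconsistent on $\tilde M^4$. You instead do a case split on how many $y_i$ vanish; your degenerate cases are clean, and in the generic case $y_1y_2y_3\neq 0$ your key claim --- that the $\{1,2,3\}$-minor vanishes on $G^{-1}(0)$ but the induced relation fails already at the fourth coordinate --- is correct (the first follows from the factorizations $x_2^4+x_2^2x_3^2+x_3^4=(x_2^6-x_3^6)/x_1^2$ etc.\ on $G=0$, which make $R_1-R_2$ and $R_1-R_3$ proportional). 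Both approaches land on a single determinantal obstruction; the paper's has the virtue of producing explicit coefficient matrices, while yours ties the degenerate cases directly to (ii).
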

 \proof First notice that  $X_2(p)=0$ if and only if  $y_2=0$ and $A_2(p)=0$. Since $A_2(p)<0$ whenever $y_2=0$, it follows that
 $X_2(p)\neq 0$ for all $p~\in~M^4$. 

Now observe that  $X_1(p)=0$ if and only if $y_1=0$ and $A_1(p)=0$.  Thus, if  $p=(x_1, x_2, x_3, y_1, y_2, y_3)$ is such that $X_1(p)=0$ then  
\begin{equation*}
\left\{ \begin{array}{l}
ay_2^2+by_3^2 =c,\vspace{.2cm}\\
dy_2^2+ey_3^2 =f,\vspace{.2cm}\\
x_2^2=x_1^2+x_3^2,
\end{array}\right.
\end{equation*}
 where
$$a=9x_1^8x_2^4(3x_1^2-2x_3^2),\,\,\,\,b=-9x_2^8x_3^4(3x_1^2+2x_2^2),\,\,\,\, c= x_1^2x_2^4x_3^4(5x_1^4+2x_2^2x_3^2),$$
$$ d=9x_1^8x_2^2(x_1^2-x_3^2),\,\,\,\, e=-9x_2^8x_3^2(x_1^2+x_2^2)$$
and 
$$  f= x_1^2x_2^2x_3^2(2x_1^2x_2^4-2x_1^2x_3^2-2x_2^2x_3^4-x_1^2x_2^2x_3^2).$$
Such system has a unique solution given by
\begin{equation*}
y_2^2=\frac{x_3^8(x_1^2+x_2^2)^2}{9x_1^{12}} \quad \quad \mbox{and} \quad \quad y_3^2=-\frac{(x_1^2-x_3^2)^2}{9x_1^4}.
\end{equation*}
Thus we must have $y_3=0$, and hence  $x_1=x_3$ and $y_2=\pm 1.$  It follows that the subset  $\{p\in M^4;X_1(p)=0\}$  coincides with $\ell_-\cup \ell_+$.

In a similar way one shows that the subset  $\{p\in M^4;X_3(p)=0\}$  coincides with $\ell_-\cup \ell_+$, and the proof of $(ii)$ is completed.

To prove $(i)$, first notice that  $X_1(p),X_2(p),X_3(p)$ are  pairwise linearly independent. This already implies that if $\lambda_1,\lambda_2,\lambda_3\in \R$ are such that 
\begin{equation}\label{combina.linear}
\lambda_1X_1(p)+\lambda_2X_2(p)+\lambda_3X_3(p)=0
\end{equation}
then either $\lambda_1=\lambda_2=\lambda_3=0$ ou $\lambda_1\neq 0,\lambda_2\neq 0$ e $\lambda_3\neq 0.$ We will show that the last possibility can not occur. 

Equation (\ref{combina.linear}) gives the system of equations 
\begin{displaymath}
\left\{ \begin{array}{ll}
x_2^2x_3^4y_1\lambda_1+x_1^6y_2\lambda_2=0 \vspace{.2cm}\\
x_1^4x_3^2y_2\lambda_2+x_2^6y_3\lambda_3=0 \vspace{.2cm}\\
\displaystyle{\big[ A_1(p)-\frac{2}{x_2^4}(3x_2^4+x_3^4+2x_2^2x_3^2)y_1^2\big]\lambda_1=0}\vspace{.2cm} \\
\displaystyle{\big[A_2(p)-\frac{2}{x_3^4}(x_1^4+3x_3^4-2x_1^2x_3^2)y_2^2\big]\lambda_2=0} \vspace{.2cm}\\
\displaystyle{\big[ A_3(p)-\frac{2}{x_1^4}(3x_1^4+x_2^4+2x_1^2x_2^2)y_3^2 \big]\lambda_3=0.}
\end{array}\right.
\end{displaymath}
Thus, it suffices to prove that the system of equations
\begin{displaymath}
\left\{ \begin{array}{ll}
\displaystyle{ A_1(p)-\frac{2}{x_2^4}(3x_2^4+x_3^4+2x_2^2x_3^2)y_1^2=0}\vspace{.2cm} \\
\displaystyle{A_2(p)-\frac{2}{x_3^4}(x_1^4+3x_3^4-2x_1^2x_3^2)y_2^2=0 }\vspace{.2cm}\\
 \displaystyle{A_3(p)-\frac{2}{x_1^4}(3x_1^4+x_2^4+2x_1^2x_2^2)y_3^2 =0}
\end{array}\right.
\end{displaymath}
has no solutions for $p=(x_1, x_2, x_3, y_1, y_2, y_3)\in \tilde{M}^4$. We write the preceding system as a linear system 
\begin{displaymath}
\left\{ \begin{array}{rcl}
a_1y_1^2+a_2y_2^2+a_3y_3^2=a_4 \vspace{.2cm}\\
b_1y_1^2+b_2y_2^2+b_3y_3^2=b_4 \vspace{.2cm}\\
c_1y_1^2+c_2y_2^2+c_3y_3^2=c_4
\end{array}\right.
\end{displaymath}
in the variables $y_1^2,y_2^2$ and $y_3^2$, where
\begin{equation*}
\begin{array}{lll}
a_1= 9x_1^2x_3^8(3x_1^4+10x_2^2x_3^2), \hspace{.6cm} & b_1= 9x_1^4x_3^8(3x_2^2+2x_3^2), \vspace{.2cm}\\

a_2= 9x_1^8x_2^4(3x_1^2-2x_3^2),  & b_2= 9x_1^8x_2^2(3x_2^4-10x_1^2x_3^2), \vspace{.2cm}\\

a_3= -9x_2^8x_3^4(3x_1^2+2x_2^2), & b_3=9x_2^8x_3^4(2x_1^2+3x_2^2), \vspace{.2cm}\\

a_4= x_1^2x_2^4x_3^4(5x_1^4+2x_2^2x_3^2), & b_4= -x_1^4x_2^2x_3^4(5x_2^4-2x_1^2x_3^2), 
\end{array}
\end{equation*}
and
\begin{equation*}
\begin{array}{lll}
 c_1= 9x_1^4x_3^8(2x_2^2+3x_3^2),\,\,\,\,\,\,\,
c_2=9x_1^8x_2^4(2x_1^2-3x_3^2),\vspace{.2cm}\\

 c_3=-9x_2^8x_3^2(3x_3^4+10x_1^2x_2^2),\,\,\,\,\,\,\,
c_4= -x_1^4x_2^4x_3^2(5x_3^4+2x_1^2x_2^2).
\end{array}
\end{equation*}
Since
\begin{equation*}
\begin{array}{l}
\det
\left(\!\!\begin{array}{ccc}
a_1 & a_2 & a_3 \\
b_1 & b_2 & b_3 \\
c_1 & c_2 & c_3
\end{array}\!\right)=656100x_1^{12}x_2^{12}x_3^{12}(x_3^4+x_1^2x_2^2)(x_1^2-x_2^2+x_3^2)=0\vspace{.2cm}\\

\det
\left(\!\!\begin{array}{ccc}
a_1 & a_2 & a_4 \\
b_1 & b_2 & b_4 \\
c_1 & c_2 & c_4
\end{array}\!\right)=-29160x_1^{14}x_3^{18}x_2^6(x_3^4+x_1^2x_2^2)\neq 0,
\end{array}
\end{equation*}
such system has no solutions. Thus $(i)$ is proved.

Now, for $p\in M^4$ we have 
\begin{equation*}
\begin{array}{l}
\displaystyle{\langle \nabla F(p),X_1(p)\rangle=\frac{10y_1}{x_2^4}(x_2^4+x_3^4)F(p)=0=\langle \nabla G(p),X_1(p)\rangle}\vspace{.2cm} \\
\displaystyle{\langle \nabla F(p),X_2(p)\rangle=\frac{10y_2}{x_3^4}(x_1^4+x_3^4)F(p)=0=\langle \nabla G(p),X_2(p)\rangle}\vspace{.2cm} \\
\displaystyle{\langle \nabla F(p),X_3(p)\rangle=\frac{10y_3}{x_1^4}(x_1^4+x_2^4)F(p)=0= \langle \nabla G(p),X_3(p)\rangle},
\end{array}
\end{equation*}
 hence $X_1,X_2,X_3$ are everywhere tangent to  $\tilde{M}^4$. That  $\gamma_\pm$ is an integral curve of $X_2$  follows by 
checking that  $X_2(\gamma_\pm(t))=(e^t,\sqrt{2}e^t,e^t,0,0,0)=\gamma'_\pm(t)$.  Finally, a straightforward computation gives
\begin{equation*}
\begin{array}{l}
\displaystyle{[X_1,X_2](p)=\Big(0,0,0,\frac{10y_2}{9x_2^2x_3^{10}}F(p),\frac{10y_1}{9x_1^6x_2^4x_3^2}F(p),0\Big)=0} \vspace{.2cm} \\

\displaystyle{[X_1,X_3](p)=\Big(0,0,0,-\frac{10y_3}{9x_1^4x_2^2x_3^6}F(p),0,-\frac{10y_1}{9x_1^2x_2^{10}}F(p)\Big)=0} \vspace{.2cm} \\

\displaystyle{[X_2,X_3](p)=\Big(0,0,0,0,\frac{10y_3}{9x_1^{10}x_3^2}F(p),\frac{10y_2}{9x_1^2x_2^6x_3^4}F(p)\Big)=0}. \qed
\end{array} 
\end{equation*}

The proof of the next proposition is straightforward.

\begin{proposition}\label{prop:involutions} $(i)$  For each  $\epsilon=(\epsilon_1, \epsilon_2, \epsilon_3)$,  $\epsilon_j\in \{-1, 1\}$ for $1\leq j\leq 3$, the map $\Phi^{\epsilon}\colon \tilde{M}^4\to \tilde{M}^4$  given by 
$$\Phi^{\epsilon}(x_1, x_2, x_3, y_1, y_2, y_3)=(x_1, x_2, x_3, \epsilon_1y_1, \epsilon_2y_2, \epsilon_3y_3)$$
satisfies $\Phi^{\epsilon}_*X_j(p)=\epsilon_jX_j(\Phi^{\epsilon}(p))$ for all $p\in \tilde{M}^4$.\vspace{1ex}\\
$(ii)$ The map $\Psi\colon \tilde{M}^4\to \tilde{M}^4$  given by 
$$\Psi(x_1, x_2, x_3, y_1, y_2, y_3)=\bigg(x_3, x_2, x_1, \frac{x_2^4}{x_1^4}y_3, \frac{x_1^4}{x_3^4}y_2,\frac{x_3^4}{x_2^4}y_1\bigg)$$
satisfies 
$$\Psi_*X_1(p)=X_3(\Psi(p)),\;\;\Psi_*X_2(p)=X_2(\Psi(p))\;\;\mbox{and}\;\;\Psi_*X_3(p)=X_1(\Psi(p))$$
 for all $p=(x_1, x_2, x_3, y_1, y_2, y_3)\in \tilde{M}^4$.\vspace{1ex}\\
$(iii)$ The maps $\Psi$ and  $\Phi^{\epsilon}$, $\epsilon\in \{-1, 1\}\times \{-1, 1\}\times\{-1, 1\}$, generate a group of involutions
of $\tilde{M}^4$ isomorphic to $\mathbb{Z}_2\times \mathbb{Z}_2\times \mathbb{Z}_2\times \mathbb{Z}_2$ that preserves the distribution ${\cal D}$ spanned by the vector fields $X_1, X_2$ and $X_3$.
\end{proposition}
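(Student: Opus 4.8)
The plan is to verify every assertion by direct substitution, using only the defining equations $x_2^2=x_1^2+x_3^2$ and (\ref{equ.chave}) of $M^4$ together with the fact, proved in Proposition~\ref{prop.chaveb}, that $X_1,X_2,X_3$ are tangent to $\tilde{M}^4$.

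For $(i)$ I would first observe that $\Phi^\epsilon$ is a well-defined involution of $\tilde{M}^4$: since $F$ and $G$ depend on the last three variables only through $y_1^2,y_2^2,y_3^2$, one has $F\circ\Phi^\epsilon=F$ and $G\circ\Phi^\epsilon=G$, so $\Phi^\epsilon$ preserves $M^4$; it plainly preserves $\{x_1,x_2,x_3>0,\ y\neq 0\}$, and it maps $\ell_-\cup\ell_+$ onto itself since it carries a point with $y=(0,\pm 1,0)$ to one with $y=(0,\pm\epsilon_2,0)$. As $\Phi^\epsilon$ is linear, its differential is $\Phi^\epsilon$ itself. Inspecting the formulas for the $X_j$, each of the first three components of $X_j$ is a function of $x$ alone times $y_j$, and each of the last three is a function of $x$ alone times a product $y_jy_k$ (the $j$-th one being $A_j$, which is even in each $y_\ell$); comparing these components one by one gives $\Phi^\epsilon_*X_j(p)=\epsilon_jX_j(\Phi^\epsilon(p))$.

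For $(ii)$ the first task is to show that $\Psi$ is a well-defined involution of $\tilde{M}^4$. That $\Psi^2=\mathrm{id}$ and $G\circ\Psi=G$ are immediate; the one genuinely algebraic point is that $F\circ\Psi=-F$ on $G^{-1}(0)$. Indeed, after substituting $\Psi$ into $F$, the coefficients of $y_1^2,y_2^2,y_3^2$ turn out to be exactly the negatives of those in $F$, while the purely $x$-dependent remainder is divisible by $x_2^2-x_1^2-x_3^2$, hence vanishes on $M^4$. Since moreover $\Psi$ permutes the positive coordinates $x_1,x_2,x_3$, is invertible in the $y$-variables, and fixes $\ell_-\cup\ell_+$ pointwise (there $x_1=x_3$, so the factors $x_1^4/x_3^4$, $x_2^4/x_1^4$, $x_3^4/x_2^4$ reduce to $1$), it follows that $\Psi(\tilde{M}^4)=\tilde{M}^4$. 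For the intertwining relations one must use the Jacobian of $\Psi$, as $\Psi$ is nonlinear: computing $D\Psi$ (whose off-diagonal entries bring in the derivatives of $x_2^4/x_1^4$, $x_1^4/x_3^4$, $x_3^4/x_2^4$), applying it to $X_1$, and simplifying with $x_2^2=x_1^2+x_3^2$ (and, for the $A$-components, also with (\ref{equ.chave})) reproduces $X_3\circ\Psi$ component by component; the relations $\Psi_*X_2=X_2\circ\Psi$ and $\Psi_*X_3=X_1\circ\Psi$ follow identically. The main obstacle I expect is matching the $A_1,A_2,A_3$-components, since these are the largest expressions and the cross-terms of $D\Psi$ contribute to precisely those components; the other components collapse at once.

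Finally, for $(iii)$: the maps $\Phi^\epsilon$ commute and square to the identity, so they form a copy of $\mathbb{Z}_2\times\mathbb{Z}_2\times\mathbb{Z}_2$; $\Psi$ is an involution by the above; and a one-line computation yields the conjugation rule $\Psi\circ\Phi^{(\epsilon_1,\epsilon_2,\epsilon_3)}=\Phi^{(\epsilon_3,\epsilon_2,\epsilon_1)}\circ\Psi$. A routine bookkeeping of these composition rules then identifies the finite group $G$ generated by $\Psi$ and the $\Phi^\epsilon$. That $G$ preserves the distribution $\mathcal{D}$ is immediate from $(i)$ and $(ii)$: each generator sends each $X_j$ to $\pm X_k$ evaluated at the image point, hence carries $\mathrm{span}\{X_1(p),X_2(p),X_3(p)\}$ onto the corresponding span at the image of $p$, and this property passes to compositions.
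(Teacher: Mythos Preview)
Your approach is exactly what the paper intends: it states only that ``the proof of the next proposition is straightforward,'' so the direct verification you outline for $(i)$ and $(ii)$ is precisely the intended argument, and your observations (that $F,G$ are even in each $y_j$, that $F\circ\Psi=-F$ modulo $G$, and that the Jacobian of $\Psi$ must be used since $\Psi$ is nonlinear) are all correct and well organized.

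There is, however, a genuine snag in part~$(iii)$. The conjugation rule you correctly derive,
\[
\Psi\circ\Phi^{(\epsilon_1,\epsilon_2,\epsilon_3)}=\Phi^{(\epsilon_3,\epsilon_2,\epsilon_1)}\circ\Psi,
\]
shows that $\Psi$ and $\Phi^{\epsilon}$ do \emph{not} commute when $\epsilon_1\neq\epsilon_3$; for instance, $(\Psi\circ\Phi^{(1,1,-1)})^2=\Phi^{(-1,1,-1)}\neq\mathrm{id}$, so this composite has order~$4$. Hence the group generated is a nonabelian semidirect product $\mathbb{Z}_2^3\rtimes\mathbb{Z}_2$ (with $\Psi$ acting by the swap $\epsilon_1\leftrightarrow\epsilon_3$), not $\mathbb{Z}_2^4$ as stated. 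Your phrase ``routine bookkeeping then identifies the finite group $G$'' would therefore not reproduce the asserted isomorphism type. This does not affect what is actually used later in the paper---only that $G$ is a finite group of diffeomorphisms of $\tilde{M}^4$ preserving $\mathcal{D}$, which does follow from your $(i)$ and $(ii)$---but you should flag the discrepancy rather than claim the bookkeeping goes through.
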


\noindent \emph{Proof of Theorem \ref{thm:minimalceq0}:} 
First we associate to each leaf $\sigma$  of ${\cal D}$ a  covering map $\phi_\sigma\colon U_\sigma\to \sigma$ from a simply-connected open subset $U_\sigma\subset \R^3$ and a minimal immersion $f_\sigma\colon U_\sigma\to \R^4$ with three distinct principal curvatures whose induced metric is conformally flat. 

For any $q\in \tilde M^4$ and for $1\leq i\leq 3$ denote  by $\tau_q^i\colon J_q^i\to \tilde{M}^4$ the maximal integral curve of $X_i$ through $q$, that is, $0\in J_q^i$, $\tau^i_q(0)=q$,  $(\tau^i_q)'(t)=X_i(\tau^i_q(t))$ for all $t\in J_q^i$, and $J_q^i$ is maximal with these properties. Let $${\cal D}(X_i)=\{(t, q)\in \R\times \tilde{M}^4\,:\, t\in J^i_q\}$$ and let $\varphi^i\colon  {\cal D}(X_i)\to \tilde{M}^4$ be the flow of $X_i$, given by $\varphi^i(t, q)=\tau^i_q(t)$. For a fixed $p\in \sigma$ define $U_\sigma=U_\sigma(p)$ by
$$U_\sigma=\{(u_1, u_2, u_3)\,:\, u_1\in J^1_p, \,u_2\in J^2_{\varphi^1(u_1, p)}, \, u_3\in J^3_{\varphi^2(u_2,\varphi^1(u_1, p))}\}$$
and $\phi_\sigma=\phi^p_\sigma$ by
$$\phi_\sigma(u_1, u_2, u_3)=\varphi^3(u_3, \varphi^2(u_2,\varphi^1(u_1, p))).$$
Then  $0\in U_\sigma$,   $\phi_\sigma(0)=p$,  and for all $u\in U_\sigma$ we have
\begin{equation*}
\frac{\partial \phi_\sigma}{\partial u_i}(u)=X_i(\phi_\sigma(u)), \,\, \,\,\, 1\leq i\leq 3.
\end{equation*} 
  
We claim that $\phi_\sigma$ is a covering map onto $\sigma$. Given $x\in\sigma$, let $\tilde B_{2\epsilon}(0)$ be an open 
ball of radius $2\epsilon$ centered at the origin such that
$\phi_\sigma^x|_{\tilde B_{2\epsilon}(0)}$ is a diffeomorphism onto 
$B_{2\epsilon}(x)=\phi_\sigma^x(\tilde B_{2\epsilon}(0))$. Since
\be\label{eq:psix0}
\phi_\sigma^p(t+s)=\varphi^3(t_3,\varphi^2(t_2,\varphi^1(t_1, \phi_\sigma^p(s))))=\phi_\sigma^{\phi_\sigma^p(s)}(t)
\ee
whenever both sides are defined, where $t=(t_1,t_2,t_3)$ 
and $s=(s_1,s_2,s_3)$, if $x=\phi_\sigma^p(s)$, $s=(s_1,s_2, s_3)\in U_\sigma$, then for any 
$y=\phi_\sigma^x(t)\in B_{2\epsilon}(x)$, $t=(t_1,t_2,t_3)\in \tilde B_{2\epsilon}(0)$, we have
$$ y=\phi_\sigma^x(t)=\phi_\sigma^{\phi_\sigma^p(s)}(t)=\phi_\sigma^p(s+t).$$
This shows that  $B_{2\epsilon}(x)\subset \phi_\sigma^p(U_\sigma)$ if  $x\in \phi_\sigma^p(U_\sigma)$, hence $\phi_\sigma^p(U_\sigma)$ is open in $\sigma$. But since $y=\phi_\sigma^x(t)$ if and only if $x=\phi_\sigma^y(-t)$, as follows from (\ref{eq:psix0}), the same argument shows that
$x\in \phi_\sigma^p(U_\sigma)$ if $y\in \phi_\sigma^p(U_\sigma)$ for some $y\in B_{2\epsilon}(x)$, and hence $\sigma\setminus \phi_\sigma^p(U_\sigma)$ is also open. It follows that $\phi_\sigma^p$ is onto $\sigma$.

Now, for any $x\in \sigma$ write 
$$
(\phi_\sigma^p)^{-1}(x)=\cup_{\alpha\in A}\tilde x_\alpha,
$$
 and for each $\alpha\in A$ let $\tilde B_{2\epsilon}(\tilde x_\alpha)$ denote 
the open ball of radius $2\epsilon$ centered at $\tilde x_\alpha$. Define 
a map $\psi_\alpha\colon B_{2\epsilon}(x)\to\tilde B_{2\epsilon}(\tilde x_\alpha)$
by 
$$
\psi_\alpha(y)=\tilde x_\alpha+(\phi_\sigma^x)^{-1}(y).
$$
By (\ref{eq:psix0}) we have 
\bea
\phi_\sigma^p(\psi_\alpha(y))\!\!\!&=&\!\!\!\phi_\sigma^p(\tilde x_\alpha+(\phi_\sigma^x)^{-1}(y))\\
\!\!\!&=&\!\!\!\phi_\sigma^{\phi_\sigma^p(\tilde x_\alpha)}((\phi_\sigma^x)^{-1}(y))\\
\!\!\!&=&\!\!\!\phi_\sigma^x((\phi_\sigma^x)^{-1}(y))\\
\!\!\!&=&\!\!\!y
\eea
for all $y\in B_{2\epsilon}(x)$. Thus $\phi_\sigma^p$ is a diffeomorphism 
from $\tilde B_{2\epsilon}(\tilde x_\alpha)$ onto 
$B_{2\epsilon}(x)$ having $\psi_\alpha$ as its inverse. 
In particular, this implies that $\tilde B_\epsilon(\tilde x_\alpha)$ and 
$\tilde B_\epsilon(\tilde x_\beta)$ are disjoint if $\alpha$ and 
$\beta$ are distinct indices in $A$. Finally, it remains to check that 
if $\tilde y\in (\phi_\sigma^p)^{-1}(B_\epsilon(x))$ then 
$\tilde y\in \tilde B_\epsilon(\tilde x_\alpha)$ for some $\alpha\in A$. 
This follows from the fact that 
$$
\phi_\sigma^p(\tilde y-(\phi_\sigma^x)^{-1}(\phi_\sigma^p(\tilde y)))=\phi_\sigma^{\phi_\sigma^p(\tilde y)}
(-(\phi_\sigma^x)^{-1}(\phi_\sigma^p(\tilde y)))=x.
$$
For the last equality, observe from (\ref{eq:psix0})  that for all 
$x, y\in \sigma$ we have that $\phi_\sigma^x(t)=y$ if and only if $\phi_\sigma^y(-t)=x$.

Writing $\phi_\sigma=(v_1,v_2,v_3,\alpha_1,\alpha_2,\alpha_3)$, it follows that $\phi_\sigma$ satisfies  (\ref{flat.min.reduz.}), as well as (\ref{eq:vis}) and  (\ref{equ.chave}). Defining $h_{ij}=\frac{1}{v_i}\frac{\d v_j}{\d u_i}$, $1\leq i\neq j\leq 3$, and $V=(V_1, V_2, V_3)$ by (\ref{holo.flat.min.2}),  it follows from  Proposition~\ref{flat.min.reduz.result.} that the triple $(v, h, V)$, where $v=(v_1,v_2, v_3)$ and $h=(h_{ij})$,  satisfies (\ref{sistema-hol}), and hence gives rise to a 
 minimal conformally flat hypersurface $f_\sigma\colon U_\sigma\to \R^4$ with three distinct principal curvatures by Corollary \ref{le:asspair}. 
 
 Given two distinct leaves $\sigma$ and $\tilde \sigma$  of ${\cal D}$, the corresponding immersions   $f_\sigma$ and $f_{\tilde \sigma}$ are  congruent if and only if there exists a diffeomorphism $\psi\colon U_\sigma\to U_{\tilde \sigma}$ such that 
 $$\psi^*I_{\tilde\sigma}=I_\sigma\,\,\,\,\mbox{and}\,\,\,\,\psi^*I\!I_{\tilde\sigma}=I\!I_\sigma$$
 where $I_\sigma$ and $I\!I_{\sigma}$ are the first and second fundamental formulae of $f_\sigma$, respectively, and $I_{\tilde\sigma}$, $I\!I_{\tilde\sigma}$ are those of $f_{\tilde \sigma}$. A long but straightforward computation shows that, up to a translation,  either $\psi$ coincides with the map given by
 $$\psi_\epsilon(u_1,u_2,u_3)=(\epsilon_1u_1, \epsilon_2u_2, \epsilon_3u_3)$$ 
 for some $\epsilon=(\epsilon_1, \epsilon_2, \epsilon_3)$ with $\epsilon_i\in \{-1, 1\}$ for $1\leq i\leq 3$, or it is the composition of such a map with the map given by
 $$\theta(u_1, u_2, u_3)=(u_3, u_2, u_1).$$
 It is easy to check that this is the case if and only if  there exists $\Theta\in G$ such that  $\phi_{\tilde \sigma}\circ \psi=\Theta\circ \phi_{\sigma}$.

 Now let $f\colon M^3\to \R^4$ be a  minimal isometric immersion  with three distinct principal curvatures of a simply connected
 conformally flat Riemannian manifold. We shall prove that either $f(M^3)$ is an open subset of the cone over a Clifford torus in $\Sf^3$ or there exist a leaf $\sigma$ of ${\cal D}$ and  a local diffeomorphism $\rho\colon M^3\to V$ onto an open subset $V\subset U_\sigma$ 
 such that $f$ is congruent to  $f_\sigma\circ \rho$.
 
 First we associate to $f$ a map $\phi_f\colon M^3\to M^4\subset \R^6$ as follows.
 Fix a unit normal vector field $N$ along $f$ and denote by $\lambda_1<\lambda_2<\lambda_3$ the distinct principal curvatures of $f$ with respect to $N$. For each $1\leq j\leq 3$, the eigenspaces $E_{\lambda_j}=\ker (A-\lambda_j I)$ associated to $\lambda_j$, where $A$ is the shape operator with respect to $N$ and $I$ is the identity endomorphism, form a field of directions along $M^3$, and since $M^3$ is simply connected we can find a smooth global unit vector field $Y_j$ along $M^3$ such that $\spa\{Y_j\}=E_{\lambda_j}$. 
Let the functions $v_1, v_2, v_3$ be defined on $M^3$ by
\be\label{eq:globalvj}v_j=\sqrt{\frac{\delta_j}{(\lambda_j-\lambda_i)(\lambda_j-\lambda_k)}},\,\,\,\delta_j=\frac{(\lambda_j-\lambda_i)(\lambda_j-\lambda_k)}{|(\lambda_j-\lambda_i)(\lambda_j-\lambda_k)|}, \,\,\,i\neq j\neq k\neq i,\ee
and let $\alpha_1, \alpha_2,\alpha_3$ be given   by
$$\alpha_1=\frac{v_1}{v_2}Y_1(v_2), \,\,\,\,\alpha_2=\frac{v_2}{v_3}Y_2(v_3)\,\,\,\mbox{and}\,\,\,\alpha_3=\frac{v_3}{v_1}Y_3(v_1).$$ 
Define $\phi_f\colon M^3\to \R^6$ by $\phi_f=(v_1,v_2,v_3,\alpha_1,\alpha_2,\alpha_3)$. 

Now, it follows from  Theorem \ref{main3} that each point $p\in M^3$ has a connected open neighborhood $U\subset M^3$ endowed with principal coordinates $u_1, u_2, u_3$ such that the pair  $(v,V)$ associated to $f$  satisfies (\ref{eq:vis}) and 
(\ref{holo.flat.min.2}). Let $\phi_U\colon U\to \R^6$ be given by $\phi_U=(v_1,v_2,v_3,\alpha_1,\alpha_2,\alpha_3)$,
with $(\alpha_1,\alpha_2,\alpha_3)=\big(\frac{1}{v_2}\frac{\partial v_2}{\partial u_1},\frac{1}{v_3}\frac{\partial v_3}{\partial u_2},\frac{1}{v_1}\frac{\partial v_1}{\partial u_3}\big)$. It is easy to check that $\phi_f|_U=\Theta\circ \phi_U$ for some $\Theta\in G$. 
On the other hand, by  Proposition  \ref{flat.min.reduz.result.} we have that $\phi_U$  
 satisfies (\ref{flat.min.reduz.}), as well as the algebraic equation 
(\ref{equ.chave}).  
It follows  that $\phi_U(U)\subset M^4$ and that 
$$\frac{\partial \phi_U}{\partial u_i}(u)=X_i(\phi_U(u)), \,\,\, \,\,\, 1\leq i\leq 3,$$
for all $u\in U$. 
Therefore either $\phi_U(U)$ is an open subset of a leaf $\sigma_{U}$ of the distribution ${\cal D}$ on $\tilde M^4$ spanned by $X_1, X_2, X_3$, or $\phi_U(U)$ is an open segment of either $\ell_+$ or $\ell_-$. If the latter possibility holds for some open subset $U\subset M^3$, then $v_1=v_3$ on $U$, hence $\lambda_2=0$ on $U$. By analyticity, $\lambda_2=0$ on $M^3$, and hence Proposition \ref{thm:minimalpczero} implies that  $f(M^3)$ is an open subset of a  cone over a Clifford torus in an umbilical hypersurface $\Q^{3}(\tilde c)\subset \R^4$, $\tilde c>0$.

Otherwise we have that each point $p\in M^3$ has an open neighborhood $U\subset M^3$ such that
$\phi_f(U)$ is an open subset of a leaf $\sigma_{U}$ of  ${\cal D}$. It follows that $\phi_f(M^3)$ is an open subset of a leaf $\sigma$ of  ${\cal D}$. If $\rho\colon M^3\to U_\sigma$ is a lift of $\phi_f$ with respect to $\phi_\sigma$, that is, $\phi_f=\phi_\sigma\circ \rho$, then $\rho$ is a local diffeomorphism such that $f$ and $f_\sigma\circ \rho$ have the same first and second fundamental forms.
Therefore $f$ is congruent to $f_\sigma\circ \rho$.\qed

{\renewcommand{\baselinestretch}{1}
\hspace*{-2ex}\begin{tabular}{ll}
Universidade Federal de Alagoas & Universidade Federal de S\~{a}o Carlos \\
Instituto de Matem\'atica & Departamento de Matem\'atica\\
57072-900 -- Macei\'o -- AL -- Brazil & 13565-905 -- S\~ao Carlos -- SP -- Brazil \\
E-mail: carlos.filho@im.ufal.br & E-mail: tojeiro@dm.ufscar.br
\end{tabular} }

\end{document}